\titleformat*{\section}{\normalsize\bfseries}
\titleformat*{\subsection}{\normalsize\bfseries}
\titleformat*{\subsubsection}{\small\bfseries}
\numberwithin{equation}{section}
\newcommand*{\matminus}{%
  \leavevmode
  \hphantom{0}%
  \llap{%
    \settowidth{\dimen0 }{$0$}%
    \resizebox{1.1\dimen0 }{\height}{$-$}%
  }%
}
\newcounter{count}
\theoremstyle{plain}
\newtheorem{definition}[count]{Definition}
\newtheorem{lemma}[count]{Lemma}
\newtheorem{corollary}[count]{Corollary}
\newtheorem{proposition}[count]{Proposition}
\newtheorem{theorem}[count]{Theorem}
\newtheorem{observation}[count]{Observation}
\theoremstyle{remark}
\newtheorem*{remark}{Remark}
\newtheorem{remarkN}[count]{Remark}
\newtheorem{example}[count]{Example}
\renewcommand{\P}{\mathbb{P}}
\renewcommand{\O}{\mathcal{O}}
\renewcommand{\L}{\Lambda}
\renewcommand{\ker}{\operatorname{ker}}
\newcommand{\im}{\operatorname{image}}
\newcommand{\GL}{GL}
\newcommand{\SL}{SL}
\newcommand{\stab}{\operatorname{stab}}
\begin{document}

\title{Multi-focal tensors as invariant differential forms}
\author{\small James Mathews \\
\small \href{mailto:jmath@math.stonybrook.edu}{jmath@math.stonybrook.edu} }
\date{}
\maketitle

\begin{abstract}
For each relative $\GL(V)$-invariant tensor $I\in \L^{p_1+1}V^{\vee}\otimes .. \otimes \L^{p_n+1}V^{\vee}$ we construct a $\GL(V)$-invariant weighted differential form $\eta$ on $(\P V)^{n}$.
Then $\eta$ is expressed explicitly with respect to $n$-tuples of frames for tangent spaces at points of $\P V$ to obtain elements of a different tensor space.

For certain invariants $I$, the resulting elements are shown to be the multi-focal tensors appearing in the machine vision literature (\cite{dmz}, \cite{lh}, \cite{luong}, \cite{faug93}, \cite{lf2}, \cite{fl}, \cite{hz}).
This generalizes the 3 multi-focal varieties known in dimension $\dim V = 4$ to an infinite collection of special tensor subvarieties.
We use this framework to exhibit a new system of degree 4 polynomial equations, reminiscent of the braid relation, satisfied by the Euclidean trifocal variety.
\end{abstract}

\newpage
\tableofcontents
\small
\newpage

\subsection{Notation}

We will use the following notation:

\begin{itemize}
\item{$k$, a field of characteristic zero (this condition on $k$ is required in Lemma \ref{char0})}
\item{$V$, a finite-dimensional vector space defined over $k$}
\item{$V^{\vee}$, the $k$-linear dual of $V$}
\item{$\P V$, the projective space of $V$, the quotient space of $V\backslash\{0\}$ by the action of the multiplicative group $k^{\times}$, with the Zariski topology}
\item{$\O_X$, the sheaf of regular functions on a variety $X$ (abbreviated $\O$)}
\item{$\O_{\P V}$, the sheaf of regular functions on $\P V$}
\item{$\mathcal{O}(1)$, the sheaf of algebraic sections of the dual of the tautological line bundle of $\P V$ (so that, for example, $\Gamma(\O(1),\P V) = V^{\vee}$)}
\item{$\mathcal{O}(m)$, the $m^{th}$ tensor power of $\mathcal{O}(1)$ over $\O_{\P V}$}
\item{$(m)$, the operation of tensor product of a sheaf of modules over $\O_{\P V}$ with $\mathcal{O}(m)$, also called a \emph{twist} by $m$}
\end{itemize}

In case the operand of $(m)$ is only a $k$ vector space, and not a sheaf, the operand is interpreted as a sheaf by tensor product over $k$ with $\mathcal{O}_{\P V}$. For example, $V(1)$ means the twist by $1$ of $V\otimes_{k}\mathcal{O}_{\P V}$. Note that the symbol $\otimes$ refers to tensor product over $\mathcal{O}$, wherever this interpretation is possible.

We generally use the Zariski topology for the spaces which are naturally algebraic varieties.

The groups, their representations, and their characters are generally algebraic.

%
%
\newpage

\subsection{Introduction}

The goal of this paper is to explain the visual-geometric phenomenon of multi-focal tensors in mathematical terms, and to import some of the ideas about them developed by computer/machine-vision scientists into mathematics.

The reader already familiar with the subject may wish to skip directly to the Main Construction in section \ref{mainconstructionsec}, the Main Application in section \ref{mainappsec}, and the section \ref{constraintsEucTri} concerning new constraints on the Euclidean trifocal variety.

It should be mentioned explicitly that the aspects which are reducible to computations in the Grassman algebra of meets and joins in 3 spatial dimensions are not the work of your present interlocutor, but rather that of several members of the computer vision research community, notably Faugeras, Hartley, Heyden, Luong, Papadopoulo, and Zisserman, among others, undertaken over the last couple of decades. The principal original contribution made here is a generalization in which the operations in the Grassman algebra are replaced with arbitrary linear skew-tensor invariants of the linear group $\GL(V)$, for a vector space $V$ of arbitrary finite dimension.

In section \ref{twodesc}, we relate skew-symmetric tensors over $V$ with certain functions defined on $GL(V)$ satisfying an equivariance property, by abstractly identifying both with weighted algebraic differential forms on $\P V$.\footnote{The idea to understand the elements of $\L^{p}V^{\vee}$ in terms of differential equations on $\P V$ actually goes back to Sophus Lie in 1877; see the exposition \cite{dolg} page 574, and \cite{liePfaff}.} An explicit isomorphism is also provided.

In section \ref{frsec} we explain how to regard our equivariant functions on $\GL(V)$ as ordinary functions, without special equivariance properties, defined on certain spaces of frames for tangent spaces of $\P V$. This is essentially the expression of the corresponding differential form with respect to these frames.

Up to this point the discussion is, informally speaking, only ``mono-focal". The first ``multi-focal" objects appear in section \ref{invformssec}, where we give the Main Construction:

Let $I$ be a joint relative $\GL(V)$-invariant of tuples of skew-symmetric tensors. That is, for some 1-dimensional character $\chi$ of $\GL(V)$,

\[I\in (\L^{p_1+1}V^{\vee}\otimes .. \otimes \L^{p_n+1}V^{\vee}\otimes \chi)^{\GL(V)}\]

To $I$ there corresponds:
\begin{enumerate}
\itemsep0em
\item{a $\GL(V)$-invariant weighted algebraic differential form on $(\P V)^{n}$, and}
\item{a $\GL(V)$-invariant algebraic function $f_w(I)$, defined on $n$-tuples of weighted frames in $\P V$, with values in the tensor space $\bigotimes_{i=1}^{n} \L^{p_i}T^{\vee}$.
}
\end{enumerate}
Here $T:=[v_0]^{\vee}\otimes V/[v_0]$ is the tangent space of $\P V$ at a basepoint $[v_0]$, regarded as an abstract tangent space of $\P V$.

In section \ref{mainappsec} we carry out this construction for some standard invariants in dimensions $\dim V = 2,3,4$. In the case $\dim V =4$ the varieties of ``multi-focal tensors" appear as the image varieties of the maps $f(I)$. In section \ref{geomconseqsec} some of the well-known properties of these varieties are explained by geometric properties of the corresponding invariant $I$.

In section \ref{practicalsec} we discuss why it is important to know more about the varieties $f(I)$ in the context of visual geometry, especially: equations cutting out $f(I)$ in the relevant tensor space, and certain integrability conditions analogous to the Maurer-Cartan equation. A proof is presented in section \ref{constraintssec} that certain new polynomial equations of degrees 4 and 5 hold on the tri-focal variety in the Euclidean setting.

\section{Invariant differential forms on powers of a projective space}\label{invforms}

\subsection{Two descriptions of algebraic differential forms on $\P V$}\label{twodesc}

\subsubsection{First description: Koszul cycles}

\begin{lemma} \label{eulers}(The Euler sequence)

There is a natural exact sequence of sheaves on $\P V$:

\[ 0\rightarrow k \rightarrow V(1)\rightarrow T_{\P V} \rightarrow 0,\]

where $T_{\P V}$ is the tangent sheaf of $\P V$.
\end{lemma}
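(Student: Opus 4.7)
The plan is to exhibit both maps explicitly and then verify exactness on a standard affine cover of $\P V$.

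First, I would construct the map $k \to V(1)$. Under the paper's convention, $k$ here denotes the constant sheaf $k \otimes_k \O_{\P V} = \O_{\P V}$, so an arrow to $V(1)$ is specified by a global section of $V(1) = V \otimes_k \O(1)$. Since $\Gamma(\O(1), \P V) = V^{\vee}$, global sections of $V(1)$ are naturally identified with $V \otimes_k V^{\vee} = \operatorname{End}_k(V)$, and I take the section corresponding to $\operatorname{id}_V$. Equivalently, this is the ``tautological'' section whose fiberwise value at $[v]$ picks out the line $[v] \otimes [v]^{\vee} \subset V \otimes [v]^{\vee}$.

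Second, I would construct $V(1) \to T_{\P V}$. The fiber of $\O(1)$ at $[v]$ is the dual line $[v]^{\vee}$, and the tangent space $T_{[v]} \P V$ is canonically $\operatorname{Hom}([v], V/[v]) = (V/[v]) \otimes [v]^{\vee}$. Fiberwise the map is the natural surjection $V \otimes [v]^{\vee} \twoheadrightarrow (V/[v]) \otimes [v]^{\vee}$ induced by the quotient $V \twoheadrightarrow V/[v]$. Globally it is the quotient by the tautological sub-line bundle $\mathcal{S} \hookrightarrow V \otimes \O_{\P V}$ tensored with $\O(1)$, hence manifestly a morphism of sheaves.

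Third, I would verify exactness on an affine chart $U_{\ell} := \{[v] : \ell(v) \neq 0\}$ for $\ell \in V^{\vee}$, where $\O(1)$ is trivialized by $\ell$ and the sequence becomes a complex of free $\O_{U_\ell}$-modules. Choosing dual bases $(v_i)$ and $(\ell_i)$ of $V$ and $V^{\vee}$ with $\ell_0 = \ell$, the first map sends $1$ to $\sum_i v_i \otimes (\ell_i/\ell)$, which is nowhere vanishing because its image at $[v]$ records the inclusion $[v] \hookrightarrow V$. The second map sends $v \otimes (\ell_i/\ell)$ to the class of $v$ in $V/[v]$, tensored with $\ell_i/\ell$. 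Its pointwise kernel is $[v] \otimes [v]^{\vee}$, which coincides with the pointwise image of the first map.

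The only conceptually delicate step is matching the pointwise kernel $[v] \otimes [v]^{\vee}$ with the pointwise image of the identity-endomorphism section; after that observation, exactness reduces to a direct rank count for free modules over $\O_{U_\ell}$, and injectivity on the left is clear because the distinguished section is nowhere zero. I do not expect any serious obstacles: the subtlety is purely notational, namely keeping the canonical identifications between fibers of $\O(1)$, the tautological line bundle, and their duals straight.
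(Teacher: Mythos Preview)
Your proposal is correct and self-contained. The paper does not actually prove this lemma: its entire ``proof'' is a citation (``See \cite{oko}, page 6''). So your approach is not so much different from the paper's as it is \emph{more}: you supply the standard construction of the two maps and the local verification of exactness that the paper delegates to a reference. The only minor remark is that your exactness check is phrased fiberwise; to be fully rigorous over an affine chart you would want to note that the maps are maps of free $\O_{U_\ell}$-modules of ranks $1$, $\dim V$, and $\dim V - 1$, so that surjectivity of the second map and injectivity of the first (via the nowhere-vanishing section) force exactness in the middle by a rank count, exactly as you indicate.
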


\begin{proof}
See \cite{oko}, page 6.
\end{proof}

\begin{lemma} \label{rexact}(Right exactness of the exterior power)

Let $K\rightarrow A\rightarrow B\rightarrow 0$ be an exact sequence of modules over a commutative ring $R$.  Then the following induced sequences, for each $p\geq 1$, are exact:

\[ K\otimes_R \L^{p-1}_R(A)\rightarrow \L^{p}_R A \rightarrow \L^{p}_R B\rightarrow 0, \]

where the first map is the exterior product.
\end{lemma}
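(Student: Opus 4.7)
The plan is to write the given exact sequence as $K \xrightarrow{\phi} A \xrightarrow{\pi} B \to 0$, with the key facts $\ker \pi = \im \phi$ and $\pi$ surjective, and verify exactness of the target sequence at all three positions. Let $J \subseteq \Lambda^p A$ denote the image of the first map, i.e.\ the $R$-submodule generated by elements of the form $\phi(k) \wedge \omega$ with $k\in K$ and $\omega \in \Lambda^{p-1} A$.

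First I would handle the two easy pieces. Surjectivity of $\Lambda^p\pi$ holds because decomposable tensors span $\Lambda^p B$ and any $b_1 \wedge \cdots \wedge b_p$ lifts to $a_1 \wedge \cdots \wedge a_p$ once one chooses $a_i \in \pi^{-1}(b_i)$. The composition being zero is immediate from $\pi\phi = 0$: $(\Lambda^p\pi)(\phi(k) \wedge \omega) = \pi(\phi(k)) \wedge \Lambda^{p-1}\pi(\omega) = 0$. So $\Lambda^p\pi$ factors through an induced map $\bar\pi : \Lambda^p A / J \to \Lambda^p B$, which is surjective, and exactness reduces to showing $\bar\pi$ is injective.

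The main work is constructing an inverse $\sigma : \Lambda^p B \to \Lambda^p A / J$ to $\bar\pi$. I would first define a set map $\tilde\sigma : B^p \to \Lambda^p A / J$ by
\[
\tilde\sigma(b_1, \ldots, b_p) \;=\; \overline{a_1 \wedge \cdots \wedge a_p},
\]
where each $a_i$ is any chosen preimage $a_i \in \pi^{-1}(b_i)$. The crucial point is independence of these choices. If $a_i'$ is a second preimage of $b_i$, then $a_i - a_i' \in \ker\pi = \im\phi$, so $a_i - a_i' = \phi(k_i)$ for some $k_i \in K$, and
\[
a_1 \wedge \cdots \wedge a_i \wedge \cdots \wedge a_p \;-\; a_1 \wedge \cdots \wedge a_i' \wedge \cdots \wedge a_p \;=\; a_1 \wedge \cdots \wedge \phi(k_i) \wedge \cdots \wedge a_p \;\in\; J,
\]
since permuting $\phi(k_i)$ to the front only changes the sign. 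A one-slot-at-a-time telescoping argument then shows the value in $\Lambda^p A / J$ is independent of all lifts. Given this, $R$-multilinearity and alternation of $\tilde\sigma$ follow directly from those of the wedge product on $\Lambda^p A$ by choosing lifts compatibly (sums lift to sums, scalars pull out, repeated arguments can be lifted to repeated arguments), so $\tilde\sigma$ descends to an $R$-linear $\sigma : \Lambda^p B \to \Lambda^p A / J$. On decomposables $\sigma \circ \bar\pi$ is the identity, hence $\bar\pi$ is injective and the sequence is exact.

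The main obstacle is precisely the well-definedness of $\tilde\sigma$: every step (independence of lifts, multilinearity, alternation) must be routed through the ideal $J$ rather than through an equality in $\Lambda^p A$, and one must be sure the single-slot calculation above suffices to control all such changes simultaneously. Once that bookkeeping is organized, the rest of the argument is formal.
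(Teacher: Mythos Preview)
Your argument is correct and complete: the construction of the inverse $\sigma$ via lifts, the telescoping well-definedness check, and the verification that $\sigma\circ\bar\pi=\mathrm{id}$ on decomposable classes (hence everywhere by linearity) are all sound. The paper does not give its own proof but simply refers to Eisenbud, \emph{Commutative Algebra}, pages 577--578; the argument there is essentially the one you have written out.
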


\begin{proof}
See \cite{ei}, pages 577-578.
\end{proof}

In the cases $p\geq 1$, apply Lemma \ref{rexact} to the $\O$-modules comprising the values of the sheaves in the sequence of Lemma \ref{eulers} (Note that, technically, Lemma \ref{rexact} applies to modules and not to sheaves). For $q\geq 0$, the $p+q$ twist of the $k$-linear dual of the result is an exact sequence

\begin{align*}
0\rightarrow \Omega^{p}(p+q)\rightarrow \L^{p}V^{\vee}(q)\rightarrow \L^{p-1}V^{\vee}(q+1)
\end{align*}

Denote the functor of global sections over $\P V$ by $H^{0}$. This functor is left exact, inducing an exact sequence

\begin{align}\label{kdef}
0\rightarrow H^{0}(\Omega^{p}(p+q))\rightarrow \L^{p}\otimes S^{q}\rightarrow\L^{p-1}\otimes S^{q+1}
\end{align}

where $\L^{p}:=\L^{p}V^{\vee}$ and $S^{q}:=\operatorname{Sym}^{q}V^{\vee}$.

Now set $\Omega_{Pol}:=\bigoplus_{p,q\geq 0} \L^{p}\otimes S^{q}$. In our context we use the following definition.

\begin{definition}
The \emph{Koszul differential}, denoted $\delta$, is the $(p,q)$ bidegree $(-1,1)$ graded mapping

\[ \delta: \Omega_{Pol}\rightarrow \Omega_{Pol}\]

equal to the direct sum of: the right-hand maps of the sequences (\ref{kdef}), and the zero maps on the summands $\L^{0}\otimes S^{q}$.
\end{definition}

The following proposition is a direct consquence.

\begin{proposition}
For every $p,q\geq 0$, $(p,q)\neq(0,0)$, there are natural isomorphisms:

\begin{align*}
H^{0}(\Omega^{p}(p+q), \P V)	& \cong	\ker (\delta:	\L^{p}		\otimes	S^{q}	\rightarrow	\L^{p-1}	\otimes S^{q+1}	)\\
\end{align*}
\end{proposition}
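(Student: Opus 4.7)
The plan is to read off the isomorphism directly from the exact sequence (\ref{kdef}) together with the definition of the Koszul differential, treating the two cases $p\geq 1$ and $p=0$ separately (note that the hypothesis $(p,q)\neq(0,0)$ together with $p=0$ forces $q\geq 1$).

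For $p\geq 1$, I would observe that by the very construction of $\delta$, its restriction to the bidegree-$(p,q)$ summand $\L^{p}\otimes S^{q}$ is precisely the right-hand arrow
\[ \L^{p}\otimes S^{q} \longrightarrow \L^{p-1}\otimes S^{q+1}\]
appearing in the sequence (\ref{kdef}). Because (\ref{kdef}) is left exact, the kernel of this map is identified, via the injection at the left end of (\ref{kdef}), with $H^{0}(\Omega^{p}(p+q),\P V)$. This is exactly the claimed isomorphism in this case.

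For $p=0$ and $q\geq 1$, $\delta$ is defined to be the zero map on the summand $\L^{0}\otimes S^{q}=S^{q}$, so its kernel there is all of $S^{q}$. On the sheaf side, $\Omega^{0}=\O_{\P V}$, so $\Omega^{0}(q)=\O(q)$, and the identification $H^{0}(\O(q),\P V)=S^{q}$ is standard from $\Gamma(\O(1),\P V)=V^{\vee}$ together with the definition of $\O(q)$ as the $q$-th tensor power of $\O(1)$. Thus both sides are canonically $S^{q}$ and the isomorphism again holds.

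I expect no real obstacle: once the Euler sequence, the right-exactness of $\L^{p}$, and the left-exactness of $H^{0}$ have been applied to produce (\ref{kdef}), and $\delta$ has been defined to agree with the right-hand map there, the proposition is a bookkeeping consequence. The only point I would double-check at the end is naturality in the intended sense — that the isomorphism is $\GL(V)$-equivariant — which is automatic from the functoriality of each ingredient (the Euler sequence, the exterior power, the twist, and global sections).
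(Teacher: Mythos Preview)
Your proposal is correct and matches the paper's own reasoning: the paper simply declares the proposition ``a direct consequence'' of the exact sequence (\ref{kdef}) and the definition of $\delta$, and you have spelled out exactly that argument, including the separate $p=0$ case (which is needed since (\ref{kdef}) was only derived for $p\geq 1$). The naturality/$\GL(V)$-equivariance check you flag is indeed automatic from the functoriality of the ingredients.
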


\begin{remark} The Koszul differential has the following combinatorial description in terms of decomposable elements.

\begin{align*}
\delta(\xi_1\xi_2\xi_3\xi_4..\xi_p\otimes f_1..f_q) = \,\,\,\xi_2\xi_3\xi_4..\xi_p&\otimes \xi_1\,f_1..f_p \\
- \xi_1\,\,\,\xi_3\xi_4..\xi_p&\otimes \xi_2\,f_1..f_p \\
+ \xi_1\xi_2\,\,\,\xi_4..\xi_p&\otimes \xi_3\,f_1..f_p \\
& \vdots
\end{align*}
\end{remark}

\begin{remark}
The Koszul differential also has the following differential-geometric description. $\Omega_{Pol}$ can be regarded as the algebra of polynomial differential forms on the algebraic manifold $V$. As an operation on $\Omega_{Pol}$, $\delta$ is the interior product with the vector field represented by the identity element $e\in V\otimes V^{\vee}$, known as the Euler field. This explains why $\delta$ is a differential, with $\delta^2 = 0$: the two-fold interior product with the vector field $e$ is the interior product with the vector field $e\wedge e = 0$.
\end{remark}

\begin{definition}
The \emph{de Rham differential} is the bidegree $(1,-1)$ graded mapping

\[d:\Omega_{Pol} \rightarrow \Omega_{Pol}\]

defined combinatorially in terms of decomposable elements as follows:

\begin{align*}
d(\xi_1..\xi_p\otimes f_1f_2f_3f_4..f_q) = \xi_1..\xi_p\,f_1&\otimes \,\,\,f_2f_3f_4..f_p \\
+  \xi_1..\xi_p\,f_2&\otimes f_1\,\,\,f_3f_4..f_p \\
+  \xi_1..\xi_p\,f_3&\otimes f_1f_2\,\,\,f_4..f_p \\
& \vdots
\end{align*}
\end{definition}

\begin{remark} Note that in the above, no signs arise (the $f_i$ commute).
\end{remark}

\begin{remark}
This de Rham differential $d$ agrees with the usual exterior derivative with respect to the geometric realization of $\Omega_{Pol}$, except for an overall sign $(-1)^{p}$ arising from the conventional use of $S^{q}\otimes \L^{p}$ rather than $\L^{p}\otimes S^{q}$.
\end{remark}
\begin{lemma} \label{char0}\hspace{1 mm}
\begin{enumerate}
\itemsep0em
\item{\label{cartan} $\delta$ and the de Rham differential $d$ satisfy
\[d\delta + \delta d = (p+q)\operatorname{id}_{S^{p}\otimes \L^{q}}\]}
\item{\label{exact} Except in grade $(p,q)=(0,0)$, the complexes $(\Omega_{Pol},\delta)$ and $(\Omega_{Pol},d)$ are exact.}
\item{\label{correspondence} Except in grade $(p,q)=(0,0)$, $d$ restricts to an isomorphism
\[ \ker \delta \cong \im d,\]
and $\delta$ restricts to an isomorphism
\[ \ker d \cong \im \delta\]
}
\end{enumerate}
\end{lemma}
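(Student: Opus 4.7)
The plan is to establish the Cartan-type homotopy formula of part (\ref{cartan}) first, and then to deduce parts (\ref{exact}) and (\ref{correspondence}) as purely formal consequences, using the characteristic zero hypothesis to invert the integer $p+q$ whenever $(p,q)\neq(0,0)$. The nilpotency identities $\delta^{2}=0$ and $d^{2}=0$ will be taken for granted: the first is already explained by the remark $\iota_{e}\iota_{e}=\iota_{e\wedge e}=0$, and the second holds for the de Rham differential on any commutative setting.

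For part (\ref{cartan}), my preferred route is through the differential-geometric description. Identify $\Omega_{Pol}$ with the algebra of polynomial differential forms on the affine space $V$; then $d$ is the exterior derivative and $\delta$ is contraction $\iota_{e}$ with the Euler vector field $e\in V\otimes V^{\vee}$. The classical Cartan magic formula $d\iota_{e}+\iota_{e}d=\mathcal{L}_{e}$ then reduces the problem to computing the Lie derivative along $e$. On the degree-$1$ generators $\xi\in V^{\vee}\subset \L^{1}\otimes S^{0}$ and $f\in V^{\vee}\subset \L^{0}\otimes S^{1}$ the flow of $e$ is simply scalar rescaling, so $\mathcal{L}_{e}$ acts as the identity on each; by the Leibniz rule it then acts on $\L^{p}\otimes S^{q}$ as multiplication by $p+q$. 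Alternatively one can verify the identity combinatorially on a decomposable element $\omega=\xi_{1}\wedge\cdots\wedge\xi_{p}\otimes f_{1}\cdots f_{q}$: the $2pq$ off-diagonal terms of $d\delta\omega$ and $\delta d\omega$ cancel in pairs thanks to the skew-signs, leaving $p+q$ diagonal terms that sum to $(p+q)\omega$.

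Once (\ref{cartan}) is in hand, part (\ref{exact}) is immediate. If $\delta\omega=0$ in bidegree $(p,q)\neq(0,0)$, then
\[
\omega \;=\; \tfrac{1}{p+q}(d\delta+\delta d)\omega \;=\; \tfrac{1}{p+q}\,\delta(d\omega) \;\in\; \im\delta,
\]
and the inclusion $\im\delta\subseteq\ker\delta$ follows from $\delta^{2}=0$; the argument for $d$ is symmetric. Part (\ref{correspondence}) is also a formal consequence. Injectivity of $d:\ker\delta\to\im d$: if $\omega\in\ker\delta\cap\ker d$ with $(p,q)\neq(0,0)$, then $(p+q)\omega=(d\delta+\delta d)\omega=0$, forcing $\omega=0$. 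Surjectivity: given $\eta=d\omega\in\im d$, one has $d\eta=d^{2}\omega=0$, so the candidate primitive $\omega':=\tfrac{1}{p+q}\delta\eta$ lies in $\ker\delta$ by $\delta^{2}=0$ and satisfies
\[
d\omega' \;=\; \tfrac{1}{p+q}\,d\delta\eta \;=\; \tfrac{1}{p+q}\bigl((p+q)\eta-\delta d\eta\bigr) \;=\; \eta.
\]
The other isomorphism $\delta:\ker d\xrightarrow{\sim}\im\delta$ is established by the mirror argument.

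The main (and essentially only substantive) obstacle is the Cartan formula itself. The geometric proof is conceptually the cleanest but requires a small sanity check: the earlier remark notes that the paper's convention $\L^{p}\otimes S^{q}$ introduces a sign $(-1)^{p}$ relative to the usual $S^{q}\otimes \L^{p}$ realization of exterior differential forms, so one must verify that this sign enters \emph{both} $d$ and $\delta$ consistently and therefore drops out of the homotopy identity $d\delta+\delta d$. The combinatorial alternative avoids this issue but is bookkeeping-heavy; in either case the dependence on characteristic zero enters only in the final division by $p+q$, matching the hypothesis on $k$ recorded in the notation section.
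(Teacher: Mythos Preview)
Your proposal is correct and follows essentially the same approach as the paper: the paper simply refers to a standard source for the Cartan formula (\ref{cartan}), notes that this supplies the chain homotopy for (\ref{exact}), and declares (\ref{correspondence}) a straightforward exercise where the characteristic-zero assumption enters via division by $p+q$. You have filled in precisely those details---the Cartan magic formula derivation of (\ref{cartan}), the explicit homotopy argument for (\ref{exact}), and the injectivity/surjectivity verification for (\ref{correspondence})---so there is nothing to add.
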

\begin{proof} For the Cartan formula (\ref{cartan}), which supplies the chain homotopies that prove (\ref{exact}), the reader is referred to \cite{ws} page 5. The proof of (\ref{correspondence}) is a straightforward exercise; we only remark that this is where we use the assumption that the characteristic of the field $k$ is 0, in order to invert the integers $p+q$ in case $p+q\neq 0$.
\end{proof}
Let us concentrate on the case $q=1$.
\begin{corollary}\label{desc1} (First description of weighted algebraic differential forms on $\P V$)
There are natural isomorphisms:
\begin{alignat*}{12}
H^{0}(\Omega^{p}(p+1), \P V)	\cong	&\ker		&\delta:&	&\L^{p}	\otimes	S^{1}	&		&\rightarrow&	\L^{p-1}	\otimes S^{2}	&	 \\
								\cong	&\im \quad	&\delta:&	&\L^{p+1}	 				&		&\rightarrow&	\L^{p}	\otimes S^{1}		&	 \\
 								\cong	&\im  		&d:&		&\L^{p}	\otimes	S^{1}	&		&\rightarrow&	\L^{p+1}					&	 \\
 								\cong	&\ker		&d:&		&\L^{p+1}					&		&\rightarrow&	0 							&	 \\
\end{alignat*}
\end{corollary}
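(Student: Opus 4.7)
The plan is to read the corollary as a chain of four natural isomorphisms and verify each link separately; everything follows by combining the preceding proposition with the two parts of Lemma~\ref{char0} specialized to bidegree $q=1$.

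The first isomorphism, $H^{0}(\Omega^{p}(p+1),\P V)\cong \ker\!\bigl(\delta\colon \L^{p}\otimes S^{1}\to \L^{p-1}\otimes S^{2}\bigr)$, is simply the preceding proposition specialized to $q=1$; the hypothesis $(p,q)\neq (0,0)$ holds automatically. The second isomorphism, $\ker\delta \cong \im\!\bigl(\delta\colon \L^{p+1}\to \L^{p}\otimes S^{1}\bigr)$, is exactness of the complex $(\Omega_{Pol},\delta)$ at bidegree $(p,1)$, which is guaranteed by Lemma~\ref{char0}(\ref{exact}). The third isomorphism is the restriction-of-$d$ statement from Lemma~\ref{char0}(\ref{correspondence}): the differential $d$ carries $\ker\delta$ at $(p,1)$ bijectively onto $\im d$, which lives inside $\L^{p+1}\otimes S^{0}=\L^{p+1}$.

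For the fourth isomorphism, observe that the codomain of $d\colon \L^{p+1}\to \L^{p+2}\otimes S^{-1}$ is zero, so $\ker(d\colon \L^{p+1}\to 0)$ is all of $\L^{p+1}$. On the other hand, applying Lemma~\ref{char0}(\ref{exact}) to the complex $(\Omega_{Pol},d)$ at bidegree $(p+1,0)$ identifies this kernel with $\im(d\colon \L^{p}\otimes S^{1}\to \L^{p+1})$, closing the chain. Naturality at each step is inherited from the constructions of $\delta$, $d$, and the Euler sequence, which are all defined without reference to a basis.

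The only bookkeeping concern is that the appeals to Lemma~\ref{char0}(\ref{exact}) and~(\ref{correspondence}) exclude bidegree $(0,0)$; this is not an issue, since the bidegrees entering the argument, $(p,1)$ and $(p+1,0)$, both satisfy $p+q\ge 1$. There is no substantive obstacle at this point of the paper; all of the real work has already been done in proving Lemma~\ref{char0}, and the present corollary is essentially a matter of transcribing those statements in the case $q=1$.
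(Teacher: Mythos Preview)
Your proof is correct and follows exactly the route the paper intends: the corollary is stated without proof precisely because it is the $q=1$ specialization of the preceding proposition together with Lemma~\ref{char0}, and you have unpacked those steps accurately. The only thing to add is that the paper gives no explicit proof here, so your write-up is in fact more detailed than the original.
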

\begin{remark}
Although we will not need to use the following facts, we include them for the purpose of illustration.  It follows from the calculation of Schur polynomials (see \cite{fulton} for the details of such calculations) that $\L^{p}\otimes S^{q}$ is irreducible as a $\GL(V)$ representation if either $p=0$ or $q=0$, and it has exactly 2 non-isomorphic irreducible summands if both $p,q\geq 1$; in this case the Young diagrams labeling the summands appearing in $\L^{p}\otimes S^{q}$ are equal to the two diagrams which can be obtained by joining a $p$-tall column and a $q$-wide row along their first boxes. The resulting pairs of isomorphic summands are exchanged by $\delta$ and $d$, as illustrated below.
\end{remark}

\begin{figure}[H]
  \centering
   \includegraphics[scale=0.15]{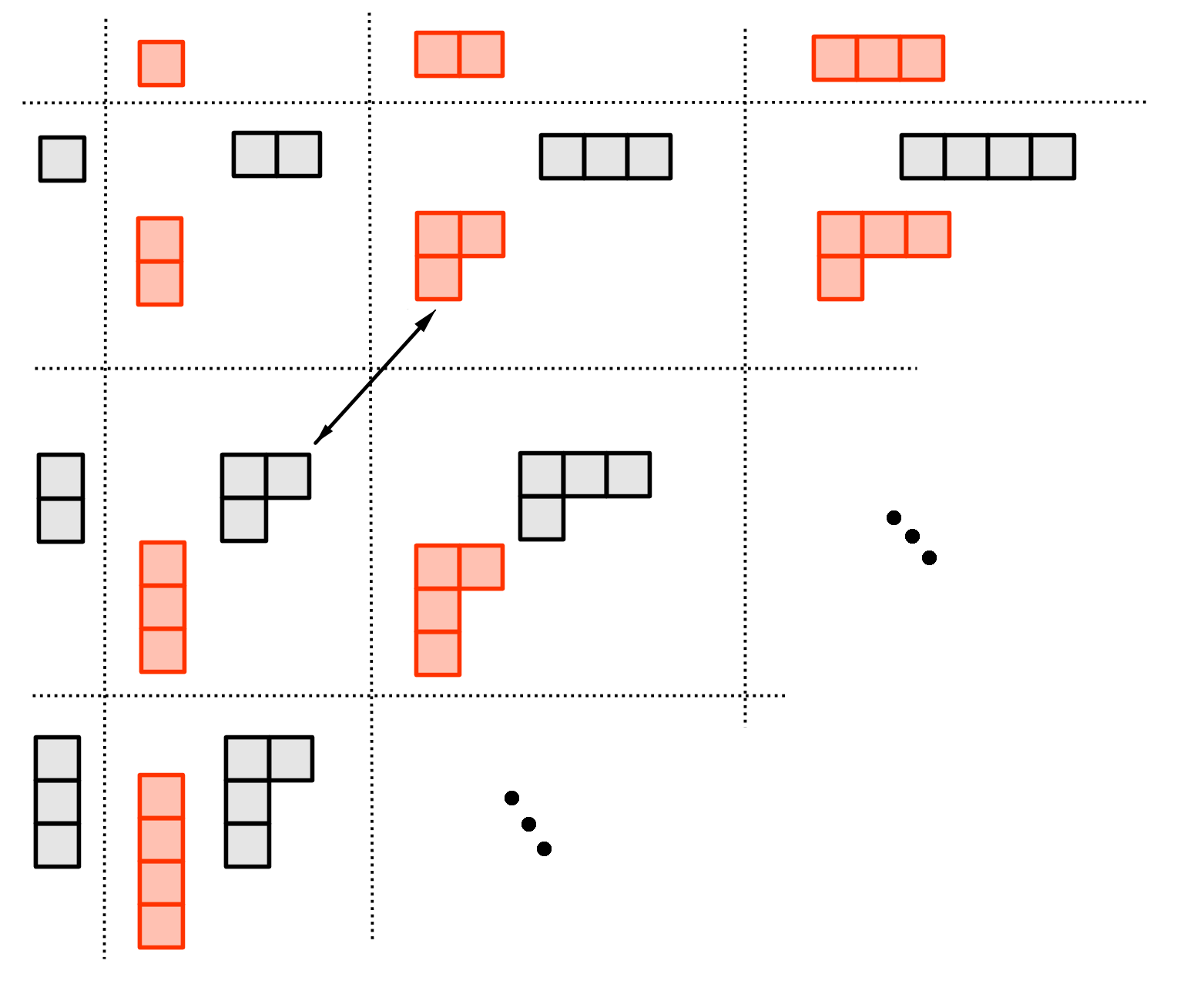}
   \caption{The Young diagrams of the summands of the $GL(V)$ irreducible decomposition of the polynomial de Rham complex $\Omega_{Pol}(V)$.}
\end{figure}

\subsubsection{Second description: Sections of associated bundles}
Now we adopt a different point of view on $\P V$, as a $\GL(V)$ homogeneous space with principal fiber bundle
\[ H\rightarrow \GL(V)\overset{\pi}{\rightarrow} \P V \]
The map $\pi$ is specified by $g\mapsto g\cdot [v_0]$ for some chosen basepoint $[v_0]\in \P V$, fixed from now on. $H$ denotes the stabilizer subgroup $\operatorname{stab}([v_0])\subset \GL(V)$.

More explicitly: Set $m=\dim V$ and extend $v_0$ to a basis for $V$. Let $g_{ij}$ denote the corresponding matrix coordinate functions on $\GL(V)$, in row $i$ and column $j$.
\[ \pi:
\begin{pmatrix}
g_{11}& .. & g_{1m}\\
\vdots& .. & \vdots \\
g_{m1} & .. & g_{mm}
\end{pmatrix} \rightarrow [g_{11},g_{12},..,g_{1m}]\]
The latter brackets $[..]$ denote homogeneous projective coordinates.

With respect to these coordinates, the subgroup $H$ is described by the equations
\[g_{12}=0\qquad g_{13}=0\qquad ...\qquad g_{1m}=0\] 
Let's also use the notation
\begin{align}\label{lwt}
\begin{split}
L&=[v_0]^{\vee}\\
W&=(V/[v_0])^{\vee}\\
T&=L\otimes W^{\vee}
\end{split}
\end{align}
so that for example $L^{q}\otimes \L^{p}W = L^{p+q}\otimes\L^{p}T^{\vee}$.
\begin{definition} Let $A$ be any group. If $X$ is a right $A$-set and $Y$ is a left $A$-set, the \emph{balanced product} $X\underset{bal}{\times}Y$ is the right $A$-set $X\times Y^{op}$ with the diagonal action. Here $Y^{op}$ denotes the opposite right $A$-space, with action $y \cdot a := a^{-1} \cdot y$.

Similarly, if $X$ and $Y$ are vector spaces and the $A$ actions are linear, $X\underset{bal}{\otimes}Y$ is the ordinary tensor product $X\otimes Y^{op}$, with the diagonal right action of $A$.
\end{definition}
\begin{remark} (Terminological ambiguity). If $X$ is only a right $A$-space and $Y$ is only a left $A$-space, there is only one way to interpret $X\times Y$ as a right $A$ space in such a way that both actions are used. Namely, as the balanced product defined above. Therefore in these cases we will omit the subscript notation $_{bal}$.
\end{remark}
\begin{definition} Let $A$ be any subgroup of a group $B$. Consider the quotient projection $\pi:B\rightarrow B/A$.  If $M$ is a representation of $A$, the bundle \emph{$\pi$-associated to $M$} is the quotient set $(B\times M)/A$, with its projection to $B/A$.  It may be denoted $B\times_{A} M$.
\end{definition}
\begin{remark}
Since the left action of $B$ on itself and the right action of $A$ on $B$ commute, the bundle $\pi$-associated to $M$ inherits a left $B$ action.
\end{remark}
Typically, but not always, if $A$ and $B$ are group objects in a category like topological spaces, manifolds, algebraic varieties, etc., then $\pi$, the $\pi$-associated bundles, and their $B$ actions also belong to this category.
\begin{proposition} \emph{(Associated bundles)} There are canonical $\GL(V)$-equivariant isomorphisms of algebraic varieties,
\begin{enumerate}
\itemsep0em
\item{between the line bundle $\O_{\P V}(1)$ and the bundle $\pi$-associated to the $H$ representation $L$,}
\item{between the tangent bundle of $\P V$ and the bundle $\pi$-associated to the $H$ representation $\mathfrak{gl}(V)/\mathfrak{h}\cong T$, and}
\item{between the vector bundle $\Omega^{p}(p+q)$ and the bundle $\pi$-associated to the $H$ representation $L^{q} \otimes \L^{p}W$.}
\end{enumerate}
\end{proposition}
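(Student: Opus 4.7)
The plan is to establish a general principle once and then instantiate it three times. For the homogeneous space $\P V=\GL(V)/H$, the functor $M\mapsto (\GL(V)\times M)/H$ from algebraic $H$-representations to $\GL(V)$-equivariant algebraic vector bundles on $\P V$ is an equivalence of categories, with inverse given by taking the fiber over $[v_0]$ (which inherits an $H$-action because $H$ stabilizes $[v_0]$). Thus for each of the three items it suffices to (a) identify the fiber over $[v_0]$ as an $H$-representation, (b) exhibit a canonical $\GL(V)$-equivariant bundle map between the associated bundle and the given bundle, and (c) verify bijectivity on the fiber over $[v_0]$; $\GL(V)$-equivariance then propagates the isomorphism over all of $\P V$.

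For item (1), the fiber of $\O(1)$ at $[v_0]$ is, by the definition of $\O(1)$ as the dual tautological line bundle, the line $[v_0]^{\vee}=L$; the map $\GL(V)\times L\to \O(1)$ sending $(g,\ell)$ to $g\cdot \ell$ in the fiber at $g[v_0]$ visibly descends to the balanced product and is fiberwise an isomorphism. For item (2), I would differentiate $\pi$ at the identity to obtain a surjection $d\pi_e:\mathfrak{gl}(V)\to T_{[v_0]}\P V$ with kernel $\mathfrak{h}$. Since $\mathfrak{h}$ consists exactly of those endomorphisms $X\in \mathfrak{gl}(V)$ with $X(v_0)\in [v_0]$, the quotient $\mathfrak{gl}(V)/\mathfrak{h}$ is canonically identified with $[v_0]^{\vee}\otimes(V/[v_0])=L\otimes W^{\vee}=T$, via the map sending $X$ to the composition $[v_0]\hookrightarrow V\overset{X}{\to}V\twoheadrightarrow V/[v_0]$, and this identification is $H$-equivariant by construction.

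Item (3) then follows formally from (1) and (2) together with the compatibility of the $\pi$-associated bundle construction with dual, exterior power, and tensor product of $H$-representations: $\Omega^p(p+q)=\L^p T_{\P V}^{\vee}\otimes \O(p+q)$ is $\pi$-associated to $\L^p T^{\vee}\otimes L^{p+q}$, and since $T^{\vee}=L^{\vee}\otimes W$ with $L$ one-dimensional, this simplifies to $L^{-p}\otimes \L^p W\otimes L^{p+q}=L^q\otimes \L^p W$, as claimed.

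I expect the main obstacle to be the bookkeeping in item (2): carefully confirming that the $H$-action on $\mathfrak{gl}(V)/\mathfrak{h}$ inherited from the adjoint representation of $H$ agrees with the $H$-action on $L\otimes W^{\vee}$ coming from the natural subquotient structure of $V$ and $V^{\vee}$, with the correct inverse conventions implicit in passing between left and right actions. The remainder is the formal machinery of associated bundles on a homogeneous space, plus the elementary linear algebra of exterior powers of tensor products with a line.
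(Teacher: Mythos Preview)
Your proposal is correct and is in fact considerably more detailed than what the paper does: the paper's entire proof consists of the single line ``See \cite{capslovak}'' (the Čap--Slovák reference on parabolic geometries), so the statement is treated as standard background. Your argument---identifying the fiber over the basepoint as an $H$-module in each case and then invoking the equivalence between $H$-representations and $\GL(V)$-equivariant bundles on $\GL(V)/H$---is exactly the content of that reference specialized to the flag variety $\P V$, and your computations in items (1)--(3), including the reduction $\L^{p}T^{\vee}\otimes L^{p+q}\cong L^{q}\otimes\L^{p}W$, are correct.
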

\begin{proof}
See \cite{capslovak}.
\end{proof}
\begin{proposition} \emph{(Second description of algebraic differential forms on $\P V$)}\label{desc2}
\begin{align*}
 H^{0}(\Omega^{p}(p+q),\P V)	\cong&	\operatorname{Maps}_H(\GL(V), L^{q} \otimes \L^{p}W)\\
 								\cong&	(\O_{\GL(V)}\underset{bal}{\otimes} L^{q} \otimes \L^{p}W)^{H}
\end{align*}
where $\operatorname{Maps}_H(\bullet, \bullet)$ denotes the set of $H$-equivariant morphisms of affine varieties, and the superscript $^H$ denotes the subspace of $H$-invariant elements.
\end{proposition}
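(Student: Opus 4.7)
The plan is to chain together two standard identifications, the first being the content of the preceding proposition, and the second being the passage from vector-valued equivariant maps to invariant elements of a tensor product. Since we may freely invoke the result that $\Omega^{p}(p+q)$ is canonically isomorphic, as a $\GL(V)$-equivariant algebraic vector bundle, to the bundle $\pi$-associated to the $H$-representation $M:=L^{q}\otimes \L^{p}W$, the first isomorphism reduces to the standard correspondence between sections of an associated bundle and equivariant maps from the total space of the principal bundle.

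Concretely, I would argue as follows. Given a global algebraic section $s$ of $\GL(V)\times_{H} M \to \P V$, define $\tilde{s}:\GL(V)\rightarrow M$ by the rule that $s(\pi(g))$ is represented by the pair $(g,\tilde{s}(g))$ in the quotient $(\GL(V)\times M)/H$; existence and uniqueness of $\tilde{s}(g)$ follow from the fact that $H$ acts freely on the first factor, and algebraicity of $\tilde{s}$ follows from the fact that $\pi$ admits local algebraic sections (being a principal $H$-bundle for an algebraic group). The defining relation $(gh,\tilde{s}(gh)) \sim (g,h\cdot \tilde{s}(gh))$ in the quotient forces the equivariance property $\tilde{s}(gh) = h^{-1}\cdot \tilde{s}(g)$, giving an element of $\operatorname{Maps}_{H}(\GL(V),M)$. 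Conversely, every such equivariant map descends to a section. This establishes the first isomorphism.

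For the second isomorphism, identify $\operatorname{Maps}(\GL(V),M)$ with $\O_{\GL(V)}\otimes_{k} M$ (using finite-dimensionality of $M$, so an algebraic map into $M$ is the same data as a finite sum $\sum f_{i}\otimes m_{i}$). The right $H$-action on $\GL(V)$ induces the natural right action on $\O_{\GL(V)}$, while $M$ carries a left $H$-action; combining them in the balanced product $\O_{\GL(V)}\underset{bal}{\otimes} M$ gives a right $H$-action on $\O_{\GL(V)}\otimes_{k}M$ whose fixed points are exactly the tensors $\sum f_{i}\otimes m_{i}$ whose corresponding map $\GL(V)\to M$ is $H$-equivariant in the above sense. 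This is a direct unwinding of definitions.

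The only point that requires any caution is the transition between maps and tensors in the algebraic category, which is why the sheaf $\O_{\GL(V)}$ rather than the ring of polynomial functions appears: one must check that the correspondence between algebraic morphisms $\GL(V)\to M$ and elements of $\O_{\GL(V)}\otimes_{k}M$ is valid, which follows from the affineness of $\GL(V)$ and the finite-dimensionality of $M$. I do not expect any genuine obstacle; the content of the proposition is essentially the definition of an associated bundle translated into functional-analytic language.
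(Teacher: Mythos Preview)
Your proposal is correct and follows essentially the same two-step strategy as the paper: first invoke the standard identification of global sections of an associated bundle with $H$-equivariant maps out of the principal bundle, then translate equivariant maps into $H$-invariants of the balanced tensor product. The only minor difference is in the bookkeeping for the second isomorphism: the paper passes through the contravariant equivalence between morphisms of affine varieties and $k$-algebra homomorphisms, noting that $\O_{M}$ is generated by $M^{\vee}$, whereas you argue directly from finite-dimensionality of $M$ that an algebraic map $\GL(V)\to M$ is a finite sum $\sum f_i\otimes m_i$. Both routes are standard and yield the same identification.
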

\begin{proof}  The first isomorphism is the standard description of the set of sections of an associated bundle (see \cite{capslovak}).  The set of algebraic maps between affine varieties over $k$ is naturally isomorphic to the set of $k$-algebra homomorphisms, mapping the opposite direction, between the algebras of regular functions (\cite{hart}, page 19).  In turn these are specified by prescribing arbitrarily their values on generators.  In this case, $\O_{L^{q} \otimes \L^{p}W}$ is generated by $ L^{q\vee} \otimes \L^{p}W^{\vee}$, so that the space of mappings is the displayed tensor product.  The condition of $H$-equivariance for the maps $\GL(V)\rightarrow L^{q} \otimes \L^{p}W$ is equivalent to $H$-invariance of the corresponding tensor, with the balanced tensor product $H$ action.
\end{proof}
\subsubsection{Explicit relation between the two descriptions}
Now define maps $\psi: \L^{p+1}V^{\vee} \rightarrow \O_{\GL(V)}\underset{bal}{\otimes} L \otimes \L^{p}W$ as follows.

The effect of the action $\GL(V)\times \L^{p+1}V\rightarrow \L^{p+1}V$ by pullback on regular functions is the so-called coaction
\[ \O_{\L^{p+1}V} \rightarrow \O_{\GL(V)}\otimes\O_{\L^{p+1}V} \]
Since $\L^{p+1}V$ is a linear representation of $\GL(V)$, the image of the restriction of the coaction to the linear forms $\L^{p+1}V^{\vee}\subset \O_{\L^{p+1}V}$ is contained in the linear forms:
\[ \L^{p+1}V^{\vee} \rightarrow \O_{\GL(V)}\otimes \L^{p+1}V^{\vee} \]
We define $\psi$ to be the composition of this map with: the identity of $\O_{GL(V)}$, tensor with the $k$-linear dual of the exterior product map shown below
\begin{align*}
 [v_0] \otimes \L^{p}(V/[v_0]) &\rightarrow \L^{p+1}V \\
 L^{\vee}\otimes \L^{p}W^{\vee} &\rightarrow \L^{p+1}V
\end{align*}
Summarizing:
\begin{definition}\label{psimaps}
\label{psi} For each $p\geq 0$, define a map $\psi=\psi_p$ as the composition:
\[\psi: \L^{p+1}V^{\vee}\rightarrow \O_{\GL(V)} \underset{bal}{\otimes} \L^{p+1}V^{\vee}\rightarrow \O_{\GL(V)} \underset{bal}{\otimes} L\otimes \L^{p}W\]
\end{definition}
\begin{remark}
We shall generally suppress the subscript $_p$ indicating which of the maps $\psi$ is being used. This way the decorations of $\psi$ can be used instead to indicate tensor indices.
\end{remark}
\begin{proposition} \label{idpsi}\hspace{1 mm} 
\begin{enumerate}
\itemsep0em
\item{The image of the map $\psi$ consists of $H$-invariants. \label{hinv}}
\item{The map $\psi$ is proportional to the isomorphism of $GL(V)$ modules equal to the composition of the descriptions of Corollary \ref{desc1} and of Proposition \ref{desc2}\label{comp}:
\begin{align*}
 \L^{p+1}V^{\vee} &\cong H^{0}(\Omega^{p}(p+1),\P V) \cong (\O_{\GL(V)}\otimes L \otimes \L^{p}W)^{H}
\end{align*}}
\end{enumerate}
\end{proposition}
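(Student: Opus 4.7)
The plan is to dispatch each part by a natural equivariance argument, with Schur's lemma handling the proportionality in part (\ref{comp}).

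For part (\ref{hinv}), I would unwind $\psi$ explicitly. An element $\alpha\in \L^{p+1}V^{\vee}$ is sent by the coaction to the regular map $\tilde\alpha:\GL(V)\to \L^{p+1}V^{\vee}$ given by $g\mapsto \alpha\circ L_g$, where $L_g$ denotes left multiplication on $V$ by $g$. A short direct computation gives $\tilde\alpha(gh)=h^{-1}\cdot \tilde\alpha(g)$ for all $h\in H$, which is exactly the equivariance condition expressing $H$-invariance in the balanced product $\O_{\GL(V)}\underset{bal}{\otimes}\L^{p+1}V^{\vee}$. The exterior-product projection $\L^{p+1}V^{\vee}\to L\otimes \L^{p}W$ is manifestly $H$-equivariant, being dual to the $H$-equivariant wedge map $[v_0]\otimes \L^{p}(V/[v_0])\to \L^{p+1}V$, so the equivariance of $\tilde\alpha$ is preserved after post-composition, giving the desired $H$-invariance of $\psi(\alpha)$.

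For part (\ref{comp}), I would invoke Schur's lemma. Since left translation on $\O_{\GL(V)}$ commutes with right multiplication by $H$, the $H$-invariant subspace $(\O_{\GL(V)}\underset{bal}{\otimes} L\otimes \L^{p}W)^{H}$ inherits a natural $\GL(V)$-module structure, and a convention check confirms that $\psi$ lands in this subspace and is $\GL(V)$-equivariant with respect to it. The composition of the isomorphisms from Corollary \ref{desc1} and Proposition \ref{desc2} is a $\GL(V)$-equivariant isomorphism between the same source and target by construction. Since $\L^{p+1}V^{\vee}$ is an absolutely irreducible $\GL(V)$-representation (a fundamental representation), Schur's lemma forces any two nonzero $\GL(V)$-equivariant maps out of it into an isomorphic target to be scalar multiples of each other.

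The only remaining verification is that $\psi$ is not identically zero. Extending $v_0$ to a basis $v_0,v_1,\ldots,v_{m-1}$ of $V$ with dual basis $\xi_0,\ldots,\xi_{m-1}$, I would evaluate $\psi(\xi_0\wedge\xi_1\wedge\cdots\wedge\xi_p)$ at the identity of $\GL(V)$. This produces the image of $\xi_0\wedge\xi_1\wedge\cdots\wedge\xi_p$ under the projection $\L^{p+1}V^{\vee}\to L\otimes \L^{p}W$, which is, up to sign, the nonzero element $\xi_0|_{[v_0]}\otimes \bar\xi_1\wedge\cdots\wedge\bar\xi_p$.

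The main obstacle is keeping straight the web of conventions: the contragredient-versus-standard action on duals, left-versus-right translation on $\GL(V)$, and the opposite-action twist built into the balanced product. Once these are all pinned down carefully, the argument becomes formal and the substantive content rests entirely on the absolute irreducibility of $\L^{p+1}V^{\vee}$ together with the isomorphisms already established in Corollary \ref{desc1} and Proposition \ref{desc2}.
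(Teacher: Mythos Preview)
Your proposal is correct and follows essentially the same route as the paper: both parts hinge on checking that the coaction lands in the balanced $H$-invariants (you do this via the explicit formula $\tilde\alpha(gh)=h^{-1}\cdot\tilde\alpha(g)$, the paper via the abstract action identity $(ab^{-1})\cdot(b\cdot x)=a\cdot x$), that the projection to $L\otimes\L^{p}W$ is $H$-equivariant, and then Schur's lemma applied to the irreducible $\L^{p+1}V^{\vee}$ for part~(\ref{comp}). Your explicit nonvanishing check at $g=\operatorname{id}$ is a welcome addition that the paper leaves implicit.
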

 The proof requires no insight beyond correct application of the definitions. The reader may wish to skip it or to supply it him or herself.
\begin{proof}(\ref{hinv}) Temporarily denote by $G$ an arbitrary group. For a general $G$-space $X$, in any category of ringed spaces for which $\O_{G\times X}\cong \O_G\otimes\O_X$, the image of the coaction $\O_X\rightarrow \O_G \otimes \O_X$ is contained in the subspace of $G$-invariant elements of the balanced tensor product $\O_G \otimes \O_X$.  This is a direct consequence of the following form of the defining property of an action:
\[(ab^{-1})\cdot (b\cdot x)=a\cdot x \qquad a,b\in G\quad x\in X \]
Thus the image of the first factor comprising the map $\psi$ in Definition \ref{psimaps} lies in the $\GL(V)$-invariants of the balanced tensor product $\O_{\GL(V)}\otimes \L^{p+1}V^{\vee}$.

The identity map $\O_{\GL(V)}\rightarrow \O_{\GL(V)}$ is certainly a mapping of right $H$-modules, indeed even of $\GL(V)$-bimodules, while the map 
\[ \L^{p+1}V^{\vee} \rightarrow L\otimes \L^{p}W\]
is only a map of left $H$-modules (since the target is only an $H$-module).  Nevertheless, as a result, the map
\[\O_{\GL(V)}\otimes \L^{p+1}V^{\vee} \rightarrow \O_{\GL(V)} \otimes L\otimes \L^{p}W\]
is balanced-$H$-equivariant.  $H$-invariants must map to $H$-invariants under such a map, so the image of $\psi$ consists of $H$-invariants.

(\ref{comp}) Now consider $\psi$ as a mapping of ordinary tensor products of left $\GL(V)$-spaces,
\[\L^{p+1}V^{\vee}\rightarrow\O_{\GL(V)} \otimes \L^{p+1}V^{\vee} \rightarrow \O_{\GL(V)} \otimes L\otimes\L^{p}W\]
The following form of the defining property of an action,
\[ a\cdot (b\cdot x) = (ab)\cdot x,\] 
implies that the first factor of the map $\psi$ is left $GL(V)$-equivariant for
\begin{itemize}
\itemsep-0.2em
\item{the domain equipped with the usual left action on $\L^{p+1}V^{\vee}$, and}
\item{for the target equipped with the left action on $\O_{\GL (V)}$ tensor the trivial left action on $\L^{p+1}V^{\vee}$.}
\end{itemize}
Thus $\psi$ is left $GL(V)$-equivariant with respect to
\begin{itemize}
\itemsep-0.2em
\item{the usual left action on the domain $\L^{p+1}V^{\vee}$, and}
\item{the left action on $\O_{\GL(V)}$ tensor the trivial left action on $L\otimes\L^{p}W$.}
\end{itemize}
Since $\psi$ and the displayed composite isomorphism are both $\GL(V)$ equivariant and $\L^{p+1}V^{\vee}$ is irreducible, by Schur's lemma these maps are proportional.
\end{proof}
\subsubsection{Basis expression for $\psi$}\label{basisexpr}
We will need to describe the maps $\psi$ very explicitly. Set $m=\dim V$ and let $v_0, \dots, v_{m-1} \in V$ denote an arbitrary extension of the basepoint $v_0$ to a basis for $V$. Let $e_0, \dots, e_{m-1} \in V^{\vee}$ be the dual basis. Note that there is an injection $(V/[v_0])^{\vee}\subset V^{\vee}$ and a projection $V^{\vee} \rightarrow [v_0]^{\vee}$. Let
\[\tilde{e}_1,\tilde{e}_2,\dots,\tilde{e}_{m-1} \in (V/[v_0])^{\vee}\]
be the basis corresponding to $e_1,e_2,\dots,e_{m-1}$ under the injection, and let
\[\tilde{e}_0 \in [v_0]^{\vee}\]
be the basis element equal to the image of $e_0$ under the projection.

Let $g_{ij}$ denote the matrix coordinate functions of $\GL(V)\cong \GL(m)$, where $i$ specifies row and $j$ specifies column. The indices $i$ and $j$ run from $0$ to $m-1$.

Denote the first map comprising $\psi$, the coaction, by $\mu : \L^{p+1}V^{\vee}\rightarrow \mathcal{O}_{\GL(V)}\otimes \L^{p+1}V^{\vee}$.
\begin{example} ($V\cong k^{2}, p=0$) \label{k2p0}
\[\psi: \L^{1}k^{2\vee}\rightarrow \mathcal{O}_{\GL(2)}\otimes [v_0]^{\vee}\otimes\L^{0}(k^2/[v_0])^{\vee}\]
Although the cases where $p=0$ are rather degenerate, we include some of them for completeness.

The first factor of $\psi$:
\begin{align*}
& \mu(e_0) = g_{00} \otimes e_0 + g_{01}\otimes e_1\\
& \mu(e_1) = g_{10} \otimes e_0 + g_{11}\otimes e_1
\end{align*}
The second map defining $\psi$ is the interior product with $v_0$ (followed by the formal tensor product with $\tilde{e}_0$):
\begin{align*}
& \psi(e_0) = g_{00}\otimes \tilde{e}_0 \otimes 1 + 0\\
& \psi(e_1) = g_{10}\otimes \tilde{e}_0 \otimes 1 + 0
\end{align*}
\end{example}
\begin{example} ($V\cong k^{3}, p=0$) \label{k3p0}
\[\psi: \L^{1}k^{3\vee}\rightarrow \mathcal{O}_{\GL(3)}\otimes [v_0]^{\vee}\otimes\L^{0}(k^3/[v_0])^{\vee}\]
\begin{align*}
& \mu(e_0) = g_{00} \otimes e_0 + g_{01}\otimes e_1 + g_{02}\otimes e_2\\
& \mu(e_1) = g_{10} \otimes e_0 + g_{11}\otimes e_1 + g_{12}\otimes e_2\\
& \mu(e_2) = g_{20} \otimes e_0 + g_{21}\otimes e_1 + g_{22}\otimes e_2
\end{align*}
\begin{align*}
& \psi(e_0) = g_{00}\otimes \tilde{e}_0 \otimes 1 + 0 + 0\\
& \psi(e_1) = g_{10}\otimes \tilde{e}_0 \otimes 1 + 0 + 0\\
& \psi(e_2) = g_{20}\otimes \tilde{e}_0 \otimes 1 + 0 + 0
\end{align*}
\end{example}
\begin{example} ($V\cong k^{3}, p=1$)\label{k3p1}
\[\psi: \L^{2}k^{3\vee}\rightarrow \mathcal{O}_{\GL(3)}\otimes [v_0]^{\vee}\otimes\L^{1}(k^3/[v_0])^{\vee}\]
Use the basis
\[e_0e_1\quad e_0e_2\quad e_1e_2\in\L^{2}k^{3\vee}\]
(We are suppressing the explicit notation for the exterior product $\wedge$.)
\begin{align*}
& \mu(e_0e_1) =(g_{00}g_{11}-g_{01}g_{10})\otimes e_0e_1 + (g_{00}g_{12}-g_{02}g_{10})\otimes e_0e_2 + (g_{01}g_{12}-g_{02}g_{11})\otimes e_1e_2\\
& \mu(e_0e_2) =(g_{00}g_{21}-g_{01}g_{20})\otimes e_0e_1 + (g_{00}g_{22}-g_{02}g_{20})\otimes e_0e_2 + (g_{01}g_{22}-g_{02}g_{21})\otimes e_1e_2  \\
& \mu(e_1e_2) =(g_{10}g_{21}-g_{11}g_{20})\otimes e_0e_1 + (g_{10}g_{22}-g_{12}g_{20})\otimes e_0e_2 + (g_{11}g_{22}-g_{12}g_{21})\otimes e_1e_2  \\
\end{align*}
\vspace{-3pc}
\begin{align*}
& \psi(e_0e_1)= (g_{00}g_{11}-g_{01}g_{10})\otimes \tilde{e}_0\otimes \tilde{e}_1 + (g_{00}g_{12}-g_{02}g_{10})\otimes \tilde{e}_0\otimes\tilde{e}_2 +0 \\
& \psi(e_0e_2)=  (g_{00}g_{21}-g_{01}g_{20})\otimes \tilde{e}_0\otimes \tilde{e}_1 + (g_{00}g_{22}-g_{02}g_{20})\otimes \tilde{e}_0\otimes \tilde{e}_2 + 0 \\
& \psi(e_1e_2)=  (g_{10}g_{21}-g_{11}g_{20})\otimes\tilde{e}_0\otimes \tilde{e}_1 + (g_{10}g_{22}-g_{12}g_{20})\otimes \tilde{e}_0\otimes \tilde{e}_2 + 0\\
\end{align*}
To simplify notation, use the bases to regard the target as a free $\O_{\GL(4)}$ module of rank $\dim_k(\L^{1}(k^3/[v_0])^{\vee})=2$, and express the values of $\psi$ as row vectors with entries in $\O_{\GL(4)}$. Also denote the values of $\psi$ on the basis elements by $\psi_{ij}=\psi(e_ie_j)$, and adopt the notation $g^{ij}_{kl}$ for the minor of the matrix $[g]$ at columns $i,j$ and rows $k,l$:
\begin{align}\label{firstnotationmin}
\begin{split}
&[\psi_{01}]= \begin{bmatrix} g^{01}_{01} & g^{02}_{01}\end{bmatrix}\\
&[\psi_{02}]= \begin{bmatrix} g^{01}_{02} & g^{02}_{02}\end{bmatrix}\\
&[\psi_{12}]= \begin{bmatrix} g^{01}_{12} & g^{02}_{12}\end{bmatrix}
\end{split}
\end{align}
\end{example}
\begin{example} \label{firstDim4}\label{k4p1} ($V\cong k^{4}, p=1$)
\[\psi: \L^{2}k^{4\vee}\rightarrow \mathcal{O}_{\GL(4)}\otimes [v_0]^{\vee}\otimes\L^{1}(k^4/[v_0])^{\vee}\]
Use the bases
\[e_0e_1\quad e_0e_2\quad e_0e_3\quad e_1e_2\quad e_1e_3\quad e_2e_3\in\L^{2}k^{4\vee}\]
\[ \tilde{e}_0\otimes \tilde{e}_1 \quad \tilde{e}_0\otimes \tilde{e}_2 \quad \tilde{e}_0\otimes \tilde{e}_3 \in [v_0]^{\vee}\otimes(k^{4}/[v_0])^{\vee}\]
Using the notation introduced for item (\ref{firstnotationmin}),
\begin{align*}
[\psi_{01}] = \begin{bmatrix} g^{01}_{01} & g^{02}_{01} & g^{03}_{01} \end{bmatrix}\\
[\psi_{02}] = \begin{bmatrix} g^{01}_{02} & g^{02}_{02} & g^{03}_{02} \end{bmatrix}\\
[\psi_{03}] = \begin{bmatrix} g^{01}_{03} & g^{02}_{03} & g^{03}_{03} \end{bmatrix}\\
[\psi_{12}] = \begin{bmatrix} g^{01}_{12} & g^{02}_{12} & g^{03}_{12} \end{bmatrix}\\
[\psi_{13}] = \begin{bmatrix} g^{01}_{13} & g^{02}_{13} & g^{03}_{13} \end{bmatrix}\\
[\psi_{23}] = \begin{bmatrix} g^{01}_{23} & g^{02}_{23} & g^{03}_{23} \end{bmatrix}
\end{align*}
\end{example}
\begin{example} \label{secondDim4} \label{k4p2}($V\cong k^{4}, p=2$)
\[\psi: \L^{3}k^{4\vee}\rightarrow \mathcal{O}_{\GL(4)}\otimes [v_0]^{\vee}\otimes\L^{2}(k^4/[v_0])^{\vee}\]
Use the bases
\[ e_0e_1e_2\quad	e_0e_1e_3\quad	e_0e_2e_3\quad	e_1e_2e_3 \in \L^{3}V^{\vee} \]
\[	\tilde{e}_0\otimes \tilde{e}_1\tilde{e}_2\quad	\tilde{e}_0\otimes \tilde{e}_1\tilde{e}_3\quad	\tilde{e}_0\otimes \tilde{e}_2\tilde{e}_3\quad	\in \Lambda^{2}(V/[v_0])^{\vee}\]
\begin{align*}
&[\psi_{012}]=	\begin{bmatrix}	g^{012}_{012} & g^{013}_{012} & g^{023}_{012} \end{bmatrix}\\
&[\psi_{013}]=	\begin{bmatrix}	g^{012}_{013} & g^{013}_{013} & g^{023}_{013} \end{bmatrix}\\
&[\psi_{023}]=	\begin{bmatrix}	g^{012}_{023} & g^{013}_{023} & g^{023}_{023} \end{bmatrix}\\
&[\psi_{123}]=	\begin{bmatrix}	g^{012}_{123} & g^{013}_{123} & g^{023}_{123} \end{bmatrix}\\
\end{align*}
\end{example}
\subsection{Frames}\label{frsec}
\subsubsection{Frame spaces as intermediate quotients}
We elaborate on the homogeneous space structure of $\P V$.
\begin{definition} For a representation $M$ of a group $H$:
\begin{itemize}
\itemsep0em
\item{$\ker(M)\subset H$ denotes the elements of $H$ which act trivially on $M$.}
\item{$\widetilde{\ker}(M)\subset H$ denotes the elements of $H$ which act by scalar operators on $M$.}
\end{itemize}
\end{definition}
\begin{definition} Let us use the following notation:
$G=\GL(V)$, $H=\stab([v_0]\in \P V)\subset G$ (as before), and
\begin{align*}
K_L&=\ker(L)\subset H&	&\widetilde{K}_L=\widetilde{\ker}(L)\subset H\\
K_T&=\ker(T)\subset H&	&\widetilde{K}_T=\widetilde{\ker}(T)\subset H\\
K_W&=\ker(W)\subset H&	&\widetilde{K}_W=\widetilde{\ker}(W)\subset H
\end{align*}
\end{definition}
Here $L$,$T$, and $W$ are defined by (\ref{lwt}).
\begin{proposition} 
\[ \widetilde{K}_L = H \qquad K_T \subset \widetilde{K}_T \qquad K_W \subset \widetilde{K}_W\]
\end{proposition}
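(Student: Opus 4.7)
The plan is to handle each claim directly from the definitions, since all three reduce to very short observations.

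For the equality $\widetilde{K}_L = H$, the key point is that $L = [v_0]^{\vee}$ is a one-dimensional $k$-vector space. Any $k$-linear endomorphism of a one-dimensional space is automatically multiplication by a scalar. Therefore every $h \in H$ acts on $L$ by a scalar operator, which gives $H \subseteq \widetilde{K}_L$; the reverse inclusion holds by definition of $\widetilde{K}_L$ as a subset of $H$.

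For the inclusions $K_T \subset \widetilde{K}_T$ and $K_W \subset \widetilde{K}_W$, I would observe that both are instances of the tautological fact $\ker(M) \subset \widetilde{\ker}(M)$ valid for any representation $M$: if $h$ acts as the identity on $M$, then it acts as the scalar operator $1 \cdot \operatorname{id}_M$, hence by a scalar. No special structure of $T$ or $W$ is needed beyond their being $H$-representations.

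There is no main obstacle; the proposition is essentially a bookkeeping statement whose content lies in the definitions. The only thing worth emphasizing is the asymmetry: the equality in the first assertion is strictly a dimension phenomenon (it would fail if $\dim L > 1$), whereas the other two inclusions are formal and would hold for any $H$-module in place of $T$ or $W$. This asymmetry motivates why the chain of subgroups $K_{\bullet} \subset \widetilde{K}_{\bullet} \subset H$ is introduced in the first place, since for $L$ the chain collapses.
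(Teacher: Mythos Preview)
Your proof is correct and is exactly the unpacking of what the paper means by ``These statements are a matter of notation.'' You have simply spelled out the definitional content that the paper leaves implicit; there is no substantive difference in approach.
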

\begin{proof}
These statements are a matter of notation.  
\end{proof}
\begin{proposition} \label{twooutofthreeP}\hspace{1mm}
\begin{enumerate}
\itemsep0em
\item{$ \widetilde{K}_T = \widetilde{K}_W$}
\item{\label{twooutofthree}The intersection of any 2 of $K_L,K_T,K_W$ is the 3-fold intersection $K_{LTW}:=K_L\cap K_T \cap K_W$.}
\end{enumerate}
\end{proposition}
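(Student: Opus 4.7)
The plan is to reduce both claims to inspection of the block decomposition of $H$ in the matrix coordinates $g_{ij}$. Since $H$ is cut out by $g_{0j} = 0$ for $j \geq 1$ (using the $0$-indexed coordinates of Section \ref{basisexpr}), an element $h \in H$ has the block shape
\[ h = \begin{pmatrix} \alpha & 0 \\ c & B \end{pmatrix}, \qquad \alpha = h_{00} \in k^{\times}, \quad B \in \GL(V/[v_0]),\]
where the column $c$ plays no role in the induced $H$-representations on $L$, $W$, or $T$. Reading off the three actions: $h$ acts on $[v_0]$ by the scalar $\alpha$ and hence on $L = [v_0]^{\vee}$ by the scalar $\alpha^{-1}$; on $V/[v_0]$ by the operator $B$ and hence on $W = (V/[v_0])^{\vee}$ by $B^{-T}$; and on $T = L \otimes W^{\vee} = [v_0]^{\vee}\otimes V/[v_0]$ by the single operator $\alpha^{-1}B$, namely the scalar $\alpha^{-1}$ rescaling the matrix $B$.

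Part (1) is then immediate: $L$ is $1$-dimensional, so every $h \in H$ automatically acts on $L$ by a scalar. Consequently the operator $\alpha^{-1}B$ on $T$ is a scalar operator if and only if $B$ is itself a scalar operator on $V/[v_0]$, which is equivalent to $B^{-T}$ being scalar on $W$. This yields $\widetilde{K}_T = \widetilde{K}_W$ with no further computation.

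For part (2), the three kernel conditions translate directly into conditions on the pair $(\alpha, B)$: one has $K_L = \{\alpha = 1\}$, $K_W = \{B = I\}$, and $K_T = \{\alpha^{-1}B = I\} = \{B = \alpha I\}$. Any two of these three conditions immediately force the third — e.g.\ from $K_L \cap K_T$ one reads $\alpha = 1$ and $B = \alpha I$, hence $B = I$ and $h \in K_W$; from $K_W \cap K_T$ one reads $B = I$ and $B = \alpha I$, forcing $\alpha = 1$ and $h \in K_L$; and $K_L \cap K_W$ gives $\alpha^{-1}B = I$ tautologically. So each pairwise intersection coincides with the three-fold intersection $K_{LTW}$. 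The only thing demanding care — and the closest thing to an obstacle — is keeping straight the row/column and contragredient conventions implicit in $\pi(g) = [g_{11}, \ldots, g_{1m}]$; but because each kernel condition collapses to ``$\alpha = 1$'' and/or ``$B$ is scalar / identity'' regardless of those conventions, the final conclusion is insensitive to them.
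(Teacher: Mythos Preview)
Your proof is correct. The paper's own proof is a one-liner: it simply cites the tensor identities
\[ T = L\otimes W^{\vee}, \qquad W = L\otimes T^{\vee}, \]
and leaves the reader to unpack them. Since $L$ is $1$-dimensional, the $H$-action on $L$ is always a scalar character $\chi_L$, so the action on $T$ differs from the action on $W^{\vee}$ (equivalently, on $W$) by that scalar; this gives part~(1) and, with one more step, part~(2). Your argument is the concrete matrix realization of exactly this observation: your $\alpha$ is $\chi_L(h)^{-1}$ and your $B$ is the operator on $V/[v_0]$, and the tensor identity $T = L\otimes W^{\vee}$ becomes your formula ``action on $T$ is $\alpha^{-1}B$''. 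What you gain is explicitness and self-containment (no reader needs to reconstruct the abstract step); what the paper's version gains is brevity and independence from any coordinate choice. Your closing remark about insensitivity to the row/column convention is apt and is precisely why the abstract argument works without ever choosing coordinates.
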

\begin{proof} 
These statements follow from the equations
\[ T = L\otimes W^{\vee} \qquad W= L\otimes T^{\vee}\] 
\end{proof}
On account of this proposition, set $\widetilde{K}_{T,W} := \widetilde{K}_T = \widetilde{K}_W$.
\begin{definition} \hspace{1mm}
Set $F=G/K_{T}$, and call this the space of \emph{frames} in $\P V$.
Set $F_w=G/K_{LTW}$, and call this the space of \emph{weighted frames} in $\P V$.
Set $F_{t}=G/K_{W}$, and call this the space of \emph{twisted (unweighted) frames} in $\P V$.
Set $F_s=G/\widetilde{K}_{T,W}$, and call this the space of \emph{frames up to scale} in $\P V$.
\end{definition}
\begin{remarkN}\label{framesisochoice} It will be convenient to make a choice of isomorphism $i:k^{m}\rightarrow T$, $i\in\operatorname{Isom}(k^{m},T)$, for the rest of this paper. The choice of $i$ is not canonical.\end{remarkN}
\begin{proposition} \label{framesiso}\hspace{1mm}
\begin{enumerate}
\itemsep0em
\item{\label{associatedframes2}There are $G$-equivariant isomorphisms between the three spaces:
\begin{itemize}
\item{$F$,}
\item{the bundle over $\P V$ $\pi$-associated to the $H$-space $\operatorname{Isom}(k^{m},T)$ (the space of ordered bases for $T$), and}
\item{the space of frames for tangent spaces of $\P V$.}
\end{itemize}}
\item{\label{scaledframes}There is a $G$-equivariant isomorphism $F_s \cong F/k^{\times}$ (where the action of $k^{\times}$ on $F$ is by rescaling).}
\end{enumerate}
\end{proposition}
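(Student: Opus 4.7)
The plan is to handle the two parts separately. For part \ref{associatedframes2}, I would use the chosen isomorphism $i : k^{m} \to T$ to present the $H$-space $\operatorname{Isom}(k^m, T)$ as a homogeneous space for $H$, then apply the standard manipulation of associated bundles to collapse the iterated quotient.

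Concretely, the first step is to observe that the $H$-action on $\operatorname{Isom}(k^m, T)$ by post-composition with the $H$-action on $T$ is transitive, with stabilizer of the basepoint $i$ equal to the subgroup of $H$ acting trivially on $T$, i.e.\ $K_T$. Transitivity follows from surjectivity of the homomorphism $H \to \GL(T)$, which can be seen by exhibiting block upper-triangular matrices in the basis $v_0,\ldots,v_{m-1}$ inducing an arbitrary element of $\GL(T)$. This gives $\operatorname{Isom}(k^m, T) \cong H/K_T$ as $H$-spaces, and hence
\[
G \times_H \operatorname{Isom}(k^m, T) \;\cong\; G \times_H (H/K_T) \;\cong\; G/K_T \;=\; F
\]
by the standard identity for associated bundles, with $G$-equivariance inherited from left translation. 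For the identification with the frame bundle of $\P V$: the third item of the Associated bundles proposition realizes $T_{\P V}$ as $\pi$-associated to the $H$-representation $T$, so its frame bundle is $\pi$-associated to the $H$-space $\operatorname{Isom}(k^m, T)$ of ordered bases of $T$, matching the previous display.

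For part \ref{scaledframes}, the key step is to identify $\widetilde{K}_{T,W}/K_T$ with $k^{\times}$. I would define $\chi : \widetilde{K}_{T,W} \to k^{\times}$ by sending $h$ to the scalar by which it acts on $T$; this is a well-defined group homomorphism by the very definition of $\widetilde{K}_{T,W}$, and its kernel is $K_T$ by the definition of $K_T$. Surjectivity is witnessed by the diagonal matrices $\operatorname{diag}(1,\lambda,\ldots,\lambda) \in H$, which fix $[v_0]$, act trivially on $L$, and act by $\lambda$ on $V/[v_0]$, hence by $\lambda$ on $T \cong L \otimes V/[v_0]$. Therefore $\widetilde{K}_{T,W}/K_T \cong k^{\times}$, and the tower of quotients gives
\[
F_s \;=\; G/\widetilde{K}_{T,W} \;\cong\; (G/K_T)/(\widetilde{K}_{T,W}/K_T) \;\cong\; F/k^{\times}.
\]
Finally, I would verify that the induced $k^{\times}$-action on $F$ is the geometric rescaling action on frames: under the identification from part \ref{associatedframes2}, right multiplication by $h \in \widetilde{K}_{T,W}$ acts on the fiber coordinate in $\operatorname{Isom}(k^m, T)$ by post-composition with $h|_T = \chi(h)\,\mathrm{id}_T$, i.e.\ by rescaling the ordered basis.

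The main obstacle is purely bookkeeping: tracking which actions are on the left versus the right, checking that $K_T$ is normal in $\widetilde{K}_{T,W}$ so the iterated quotient makes sense (which is automatic since $K_T = \ker \chi$), and confirming that the scalar action on the associated-bundle fiber agrees with the geometric frame-rescaling action. No nontrivial computation is required.
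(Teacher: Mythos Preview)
Your proposal is correct and follows essentially the same route as the paper. For part~\ref{associatedframes2} the paper argues directly via orbit--stabilizer on each of the three spaces (checking that each basepoint has $G$-stabilizer $K_T$), whereas you pass through the identification $\operatorname{Isom}(k^m,T)\cong H/K_T$ and the associated-bundle identity $G\times_H(H/K_T)\cong G/K_T$; for part~\ref{scaledframes} the paper records the slightly stronger fact that the extension $1\to K_T\to\widetilde{K}_T\to k^\times\to 1$ is split by the central scalars, while you only use that $K_T=\ker\chi$ is normal with quotient $k^\times$---which is all that is needed for the iterated quotient.
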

\begin{proof}
(\ref{associatedframes2}) $G$ is transitive on all three spaces. The quotient $F=G/K_{T}$ has the basepoint equal to the class of the identity element of $G$.The bundle $\pi$-associated to $\operatorname{Isom}(k^{m},T)$ is endowed with the basepoint equal to the $H$-equivalence class of the pair $(1\in G,i)$. $T$ is naturally isomorphic with $T_{[v_0]}\P V$, so that $i$ also distinguishes a frame in $\P V$ at the chosen basepoint $[v_0]$. These basepoints have the same stabilizer in $G$, namely $K_{T}$. Thus the orbit-stabilizer construction provides the mutual isomorphism between these 3 spaces.

(\ref{scaledframes}) The group $K_T$ is normal in $\widetilde{K}_T$, with quotient isomorphic to $k^{\times}$. The extension is split by the subgroup of $\widetilde{K}_T$ equal to the scalar transformations.  Since this subgroup is central, in fact
\[\widetilde{K}_T \cong K_T \times k^{\times}\]
Thus
\[ F_s = G/\widetilde{K}_{T,W} = G/\widetilde{K}_{T} = G/(K_T \times k^{\times} ) = (G/K_T)/k^{\times} = F/k^{\times}\]
\end{proof}
\subsubsection{The subgroup lattices}
For the reader's convenience in organizing what has been said so far, we show: the lattice of the groups introduced in section \ref{frsec}, the lattice of the corresponding $G$-homogeneous quotients, and the restrictions of the $H$ representation $M=L^{p+q}\otimes \L^{p}W=L^{q}\otimes \L^{p}T^{\vee}$ to these subgroups. Factors enclosed by square brackets are [trivial], and factors enclosed by parantheses are trivial only up to (scale).
\[
\xymatrix{
			&H								&			& &			\\
			&\widetilde{K}_{T,W}\ar@{^{(}->}[u]				&			& &			\\
K_{T}\ar@{^{(}->}[ur]	&								&K_{W}\ar@{^{(}->}[ul]	& &K_{L}\ar@{^{(}->}[uulll]\\
			&K_{LTW}\ar@{^{(}->}[ul]\ar@{^{(}->}[ur]\ar@{^{(}->}[urrr]	&			& &			\\
}\]

\[
\xymatrix{
		&\P V				&		& &				\\
		&F_s\ar@{->>}[u]			&		& &				\\
F\ar@{->>}[ur]	&				&F_t\ar@{->>}[ul]	& &V\backslash\{0\}\ar@{->>}[uulll]	\\
		&F_w\ar@{->>}[ul]\ar@{->>}[ur]\ar@{->>}[urrr]	&		& &				\\
}\]

\[
\xymatrix{
			&L^{p+q}\otimes \L^{p}W=L^{q}\otimes \L^{p}T^{\vee}		&			& 				\\
			&L^{p+q}\otimes(\L^{p}T^{\vee})=L^{q}\otimes (\L^{p}W)	&			& 				\\
L^{p+q}\otimes[\L^{p}T^{\vee}]	&							&L^{q}\otimes[\L^{p}W]	& [L^{q}]\otimes \L^{p}W	\\
			&[L^{p+q}\otimes \L^{p}W=L^{q}\otimes \L^{p}T^{\vee}]	&			& 				\\
}\]
\subsubsection{Functions on frames spaces}
In this section we relate the mappings of Proposition \ref{desc2} to functions on $F_w$ and $F$.
\begin{proposition} \label{converttoframes}\hspace{1mm} 
For each $p,q\geq 0$, $(p,q)\neq (0,0)$, there is an injection of $\GL(V)$ modules
\begin{align*}
\operatorname{Maps}_H(GL(V),L^{q}\otimes\L^{p}W) \rightarrow & \operatorname{Maps}(F_w,L^{p+q}\otimes L^{p}T^{\vee})\\
& \cong H^{0}( \O_{F_w}, F_w)\otimes L^{p+q}\otimes\L^{p}T^{\vee}
\end{align*}
where the $GL(V)$ action on the right-hand side is induced by the left action on $F_w$.
\end{proposition}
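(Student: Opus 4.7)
The plan is to send each $H$-equivariant morphism $f\colon \GL(V)\to L^{q}\otimes \L^{p}W$ to its factorization along the surjection $\GL(V)\twoheadrightarrow \GL(V)/K_{LTW}=F_w$, regarded as an ordinary morphism into the fixed vector space $L^{p+q}\otimes\L^{p}T^{\vee}$ via the canonical identification $L^{q}\otimes \L^{p}W=L^{p+q}\otimes \L^{p}T^{\vee}$ of $H$-modules recorded in (\ref{lwt}).

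The first step is to check that $f$ genuinely descends to $F_w$. By $H$-equivariance one has $f(gk)=k^{-1}\cdot f(g)$ for all $k\in H$, so it is enough that $K_{LTW}$ act trivially on the target. Since $K_{LTW}\subset K_L\cap K_W$, both $L$ and $W$ are acted on trivially by $K_{LTW}$, whence $L^{q}\otimes \L^{p}W$ is trivial as a $K_{LTW}$-representation; this is precisely the triviality recorded in the bottom row of the third subgroup--lattice diagram. Consequently $f$ is constant on right $K_{LTW}$-cosets and descends to a well-defined morphism $\tilde{f}\colon F_w \to L^{p+q}\otimes \L^{p}T^{\vee}$.

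Next, the assignment $f\mapsto \tilde{f}$ is injective because $\GL(V)\twoheadrightarrow F_w$ is surjective, so distinct $H$-equivariant functions on $\GL(V)$ yield distinct descents. For $\GL(V)$-equivariance, both source and target carry the action induced by left translation on $\GL(V)$, which commutes with the right $K_{LTW}$-action used for the quotient; the identity $(g'\cdot f)(g)=f(g'^{-1}g)$ translates directly into $(g'\cdot \tilde{f})([g])=\tilde{f}([g'^{-1}g])$, and the coefficient factor $L^{p+q}\otimes \L^{p}T^{\vee}$ carries the trivial $\GL(V)$-action on the right-hand side, matching the fact that the left $\GL(V)$-action on the domain $\GL(V)$ does nothing to the values of $f$.

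The main bookkeeping obstacle is to keep the two incarnations of the coefficient module aligned: on the left it appears as an $H$-representation $L^{q}\otimes \L^{p}W$ (on which $H/K_{LTW}$ still acts nontrivially), whereas on the right it is treated as an inert vector space $L^{p+q}\otimes \L^{p}T^{\vee}$ with the entire $H$-dependence absorbed into the fiberwise structure of the tower $F_w \to F_s\to \P V$. Once that identification is fixed and the triviality of the $K_{LTW}$-action is isolated, the remaining verifications are formal.
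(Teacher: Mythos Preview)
Your argument is correct and is exactly the unpacking the paper has in mind: the paper itself omits the proof, stating only that it ``follows directly from the definitions,'' and your route---identifying $L^{q}\otimes\L^{p}W$ with $L^{p+q}\otimes\L^{p}T^{\vee}$, using triviality of the $K_{LTW}$-action (as recorded in the bottom row of the third lattice diagram) to descend to $F_w$, and then reading off injectivity and left $\GL(V)$-equivariance---is precisely that direct verification.
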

\begin{proposition} \label{converttoframesunweighted}\hspace{1mm} 
For each $p,q\geq 0$, $(p,q)\neq (0,0)$, there is an injection of $\GL(V)$ modules
\begin{align*}
\operatorname{Maps}_H(GL(V),L^{q}\otimes\L^{p}W) \rightarrow H^{0}( \O_{F}(p+q), F)\otimes \L^{p}T^{\vee}
\end{align*}
\end{proposition}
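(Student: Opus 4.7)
The plan is to mirror Proposition \ref{converttoframes}, replacing $K_{LTW}$ by the larger subgroup $K_T$ and absorbing the resulting line-bundle twist into the notation $\O_F(p+q)$. The key observation is that $K_T$ acts trivially on $T$ (by definition), and therefore trivially on $\L^p T^{\vee}$, which is exactly the factor that needs to be pulled outside the $K_T$-equivariance condition.

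First I would rewrite the target as an $H$-representation. The relation $W = L\otimes T^{\vee}$ from (\ref{lwt}) gives $\L^{p}W \cong L^{p}\otimes \L^{p}T^{\vee}$, hence
\[ L^{q}\otimes \L^{p}W \;\cong\; L^{p+q}\otimes \L^{p}T^{\vee} \]
as $H$-representations. Any $H$-equivariant map $\GL(V)\to L^{p+q}\otimes \L^{p}T^{\vee}$ is in particular $K_T$-equivariant (since $K_T\subset H$), giving the forgetful inclusion $\operatorname{Maps}_H \hookrightarrow \operatorname{Maps}_{K_T}$, which is automatically injective. Since $K_T$ acts trivially on $\L^p T^{\vee}$, the $K_T$-equivariance condition then decouples and the factor $\L^p T^{\vee}$ pulls out:
\[ \operatorname{Maps}_{K_T}(\GL(V),\, L^{p+q}\otimes\L^{p}T^{\vee}) \;=\; \operatorname{Maps}_{K_T}(\GL(V),\, L^{p+q})\otimes \L^{p}T^{\vee}. \]
Finally I would invoke the standard associated-bundle description (exactly as in Proposition \ref{desc2}, but applied to the homogeneous space $F = \GL(V)/K_T$ in place of $\P V = \GL(V)/H$): $\O_F(p+q)$ is the line bundle on $F$ associated to the restriction of the $H$-representation $L^{p+q}$ to $K_T$, equivalently the pullback of $\O_{\P V}(p+q)$ along $F\to\P V$, so its global sections are $\operatorname{Maps}_{K_T}(\GL(V), L^{p+q})$.

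Composing these three identifications yields the desired map, which is injective because each step is. $\GL(V)$-equivariance is preserved throughout since every intermediate space carries the natural left $G$-action and every identification is left $G$-equivariant. The only substantive bookkeeping point — which I expect to be the main (still minor) obstacle — is confirming that $\O_F(p+q)$, as it appears in the statement, really does agree with the associated-bundle description via the character $L^{p+q}$ of $K_T$; this amounts to functoriality of the associated-bundle construction along the subgroup inclusion $K_T\subset H$ together with consistency with the twist conventions fixed for $\P V$ in the Notation section.
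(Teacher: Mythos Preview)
Your proposal is correct and is essentially the same approach as the paper's: the paper simply says the proof ``follows directly from the definitions'' and then explains that $\O_F(p+q)$ is the pullback of $\O_{\P V}(p+q)$, equivalently the line bundle associated to the $K_T$-representation $L^{p+q}$. You have spelled out exactly those definitions---the rewriting $L^q\otimes\L^pW\cong L^{p+q}\otimes\L^pT^{\vee}$, the forgetful inclusion $\operatorname{Maps}_H\hookrightarrow\operatorname{Maps}_{K_T}$, and the decoupling of the $K_T$-trivial factor $\L^pT^{\vee}$---and your closing remark about $\O_F(p+q)$ matches the paper's notational clarification almost verbatim.
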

The proofs of Propositions \ref{converttoframes} and \ref{converttoframesunweighted} follow directly from the definitions. We only need to explain the notation : $\O_{F}(p+q)$ denotes the sheaf-of-modules pullback of $\O_{\P V}(p+q)$ from $\P V$ to $F$, or equivalently the sheaf of sections of the line bundle associated to the $K_T$ representation $L^{p+q}$.
\subsection{Invariant forms}\label{invformssec}\label{mainconstructionsec}
\subsubsection{Main Construction}
\begin{theorem} \label{mainconstruction}
To each relative $\GL(V)$-invariant
\[I\in \L^{p_1+1}V^{\vee}\otimes .. \otimes \L^{p_n+1}V^{\vee}\]
there corresponds:
\begin{enumerate}
\itemsep0em
\item{a $\GL(V)$-invariant weighted differential form
\begin{align*}
\eta=\eta(I)\in &H^{0}(\Omega^{p_1}\boxtimes..\boxtimes\Omega^{p_n}(p_1+1,.., p_n+1), (\P V)^n)\\
 \subset &H^{0}(\Omega^{p_1+..+p_n}(p_1+1,.., p_n+1), (\P V)^n)
\end{align*}}
\item{a $\GL(V)$-invariant tensor
\[\tau=\tau(I)\in\left[
 \bigotimes_{i=1}
 \O_{\GL(V)}\underset{bal}{\otimes} L \otimes \L^{p_i}W
 \right]^{H^n}\]
}
\item{a $\GL(V)$-invariant regular function
\begin{align*}
f_w=f_w(I)	:F_w^{n} &\rightarrow \bigotimes_{i=1}^{n}L\otimes \L^{p_i}W
\end{align*}}
\item{$\GL(V)$-invariant rational functions
\begin{align*}
 f=f(I):F^{n} &\dashrightarrow \P \left(\bigotimes_{i=1}^{n} \L^{p_i}T^{\vee} \right)\\
 f_s=f_s(I):(F/k^{\times})^{n} &\dashrightarrow \P \left(\bigotimes_{i=1}^{n} \L^{p_i}T^{\vee}\right) 
\end{align*}}
\end{enumerate}
\end{theorem}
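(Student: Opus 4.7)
The strategy is to produce each of the four objects factor-by-factor from the single-variable machinery already in place, then verify that invariance propagates through the $n$-fold product.

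First, to build $\eta(I)$, apply the $\GL(V)$-equivariant isomorphism of Corollary \ref{desc1} in each tensor slot, sending each $\Lambda^{p_i+1}V^{\vee}$ component of $I$ to a section of $\Omega^{p_i}(p_i+1)$ on $\P V$. Taking the external product of the $n$ resulting sections yields the desired element of $H^{0}(\Omega^{p_1}\boxtimes\cdots\boxtimes\Omega^{p_n}(p_1+1,\ldots,p_n+1),(\P V)^{n})$, which includes into $H^{0}(\Omega^{p_1+\cdots+p_n}(\ldots),(\P V)^{n})$ via the wedge product of the cotangent factors of a product. For invariance, the character $\chi$ by which $I$ transforms as a relative invariant matches exactly the character by which $\GL(V)$ acts on the sheaf $\mathcal{O}(p_1+1)\boxtimes\cdots\boxtimes\mathcal{O}(p_n+1)$, so the weighted differential form is genuinely $\GL(V)$-invariant.

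Second, for $\tau(I)$ and $f_w(I)$, apply $\psi_{p_i}$ (Definition \ref{psimaps}) in each of the $n$ tensor slots. By Proposition \ref{idpsi}(\ref{hinv}) each factor image consists of $H$-invariants, so the $n$-fold tensor product lies in the $H^n$-invariants of $\bigotimes_{i=1}^{n}\mathcal{O}_{\GL(V)}\underset{bal}{\otimes}L\otimes\Lambda^{p_i}W$, producing $\tau(I)$. Proposition \ref{converttoframes}, applied in each slot, then converts each $H$-equivariant function on $\GL(V)$ into a regular function on $F_w$ with values in $L\otimes\Lambda^{p_i}W$; the external product gives $f_w(I):F_w^n\to\bigotimes_i L\otimes\Lambda^{p_i}W$, with $\GL(V)$-invariance inherited from $\tau$ via the $\GL(V)$-action on $F_w$.

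Third, for $f(I)$ and $f_s(I)$, rewrite the codomain of $f_w$ using $W\cong L\otimes T^{\vee}$ as $L^{n+\sum p_i}\otimes\bigotimes_{i}\Lambda^{p_i}T^{\vee}$. The projection $F_w\twoheadrightarrow F$ quotients by $K_T/K_{LTW}$, and this quotient group acts on the codomain only through the scalar action on the $L$ factors; hence $f_w$ descends to $F^n$ only up to an overall $k^{\times}$ factor, giving a well-defined rational map into $\P(\bigotimes_i\Lambda^{p_i}T^{\vee})$, with domain of definition the locus where $f_w\neq 0$. Further descending by the diagonal $k^{\times}$-scaling on $F$ gives $f_s$ on $(F/k^{\times})^n=F_s^n$. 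The main obstacle is purely organizational: matching the character $\chi$ governing the relative invariance of $I$ against the weight of $\eta$ and against the $L$-tensor factors that get stripped off when passing from $F_w$ down to $F$ and then to $F_s$. Once these characters are aligned, the theorem is really an assertion that Corollary \ref{desc1}, Proposition \ref{idpsi}, and Proposition \ref{converttoframes} assemble compatibly in the $n$-fold tensor product.
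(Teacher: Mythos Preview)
Your proposal is correct and follows essentially the same route as the paper: apply the single-factor maps $\psi_{p_i}$ (resp.\ Corollary~\ref{desc1}, Proposition~\ref{converttoframes}) slot-by-slot and assemble via K\"unneth, then projectivize and descend through the appropriate quotient for $f$ and $f_s$. The only organizational difference is that the paper builds $\tau(I)$ first and derives $\eta(I)$ from it via Proposition~\ref{idpsi}, whereas you construct $\eta(I)$ directly from Corollary~\ref{desc1}; since Proposition~\ref{idpsi}(\ref{comp}) identifies $\psi$ with that composite up to scalar, the two orders are equivalent.
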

\begin{proof} To obtain $\tau(I)$, apply to $I$ the tensor product of the maps $\psi_{p_1},..,\psi_{p_n}$ (Definition \ref{psi}).
To obtain $\eta(I)$, apply Proposition \ref{idpsi} to $\tau(I)$ and then the Kunneth isomorphism
\begin{align*}
		& H^{0}(\Omega^{p_1}(p_1+1),\P V)\otimes .. \otimes H^{0}(\Omega^{p_n}(p_n+1),\P V)\\	
\cong	& H^{0}(\Omega^{p_1}(p_1+1)\boxtimes .. \boxtimes \Omega^{p_n}(p_n+1),(\P V)^{n})
\end{align*}
To obtain $f_w(I)$, apply Proposition \ref{converttoframes} to $\tau(I)$ and then the Kunneth isomorphism
\begin{align*}
	& H^{0}(\O_{F_w},F_w)\otimes .. \otimes H^{0}(\O_{F_w},F_w)\\	
\cong	& H^{0}(\O_{F_w}\boxtimes .. \boxtimes \O_{F_w},F_w^{n}) \\
\cong	& H^{0}(\O_{F_w^{n}},F_w^{n})
\end{align*}	
To obtain $f(I)$: Regard $\tau(I)$ as an $H^{n}$-equivariant map
\[h: (\GL(V))^{n}\rightarrow \bigotimes_{i=1}^{n}L\otimes \L^{p_i}W\]
It descends to a rational map, still $H^{n}$-equivariant, defined away from $h^{-1}(0)$:
\[(\GL(V))^{n}\dashrightarrow \P(\bigotimes_{i=1}^{n}\L^{p_i}W) \]
We regard it as merely $K_T^{n}$-equivariant. Since the target is actually $K_T^{n}$-invariant, the map corresponds to a rational section of a trivial bundle over $F^{n}$, the map $f(I)$.

To obtain $f_s(I)$, repeat the procedure just described, which produced $f(I)$ from the group $K_T^{n}$, with the group $\widetilde{K}_T^{n}$ instead.
\end{proof}
\begin{remarkN} \label{termMF}(Terminology). The image of the map $f_w(I)$ in the tensor space $\bigotimes_{i=1}^{n}L\otimes \L^{p_i}W$ is the cone on the images of $f$ and $f_s$ in $\P \left(\bigotimes_{i=1}^{n} \L^{p_i}T^{\vee} \right)\cong \P \left(\bigotimes_{i=1}^{n} L\otimes \L^{p_i}T^{\vee} \right)$. This conical affine variety and the associated projective variety will both be called the \emph{multi-focal variety of $I$}, and they are to be distinguished by context. The points of these varieties are called \emph{multi-focal tensors}. The maps $f_w(I), f(I), f_s(I)$ will be called \emph{multi-focal maps}.
\end{remarkN}
\subsubsection{Symmetry reduction}\label{symmetryred}
\begin{definition}
The \emph{reduction to a subgroup}  $G'\subset G$ will mean the system
\begin{align*}
 H'&=H\cap G'&		&F'=G'/K_T'	\\
 K_L'&=K_L\cap G'&	&F_w'=G'/K_{LTW}'	\\
 K_T'&=K_T\cap G'&	&F_t'=G'/K_W'\\
 K_W'&=K_W\cap G'&	&F_s'=G'/\widetilde{K}_{T,W}\\
 \widetilde{K}_{T,W}'&=\widetilde{K}_{T,W}\cap G'&	&f_w'=f_w|_{F_w^{'n}}	\\
 K_{LTW}'&=K_{LTW}\cap G'&	&f'=f|_{F^{'n}}\\
 &		&	&f_s'=f_s|_{(F/k^{\times})^{'n}}
\end{align*}
\end{definition}
Note that $F'$, $F_w'$, $F_t'$, and $F_s'$ are also identified with the $G'$ orbits of the basepoints in $F,F_w,F_t,F_s$.
\begin{remark} (Terminology) In the presence of symmetry reduction, the terms \emph{multi-focal maps of $I$}, \emph{multi-focal variety of $I$}, and \emph{multi-focal tensors of $I$} are modified to mean the maps $f'_w, f', f'_s$, their image affine and projective varieties, and the points of these images.
\end{remark}
\begin{example}\label{affred} (Affine reduction) The reduction to the affine group entails a great deal of simplification.  We select some element $l\in V^{\vee}$ such that $l(v_0)\neq 0$, and define the affine group $\operatorname{Aff}$ to be the subgroup of $\GL(V)$ fixing $l$.  In this case we restrict our attention to $\mathbb{A}:=\operatorname{Aff}/H'$, which can be regarded as the affine chart for $\P V$ which is the complement of the projectivization of the hyperplane in $V$ equal to the kernel of $l$.
\end{example}
\begin{proposition} Under affine reduction $G'=\operatorname{Aff}$,
\begin{align*}
 & K_L' = H' (=\widetilde{K}_{L}') \\
 & H'\cong\{1\}\times \GL(W) \cong \GL(W)\\
 & K_T' = K_W' = K_{LTW}' = \{1\} \\
 & \widetilde{K}_{T,W}' = \{1\} \times k^{\times} \cong k^{\times}
\end{align*}
\end{proposition}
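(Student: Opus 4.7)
The plan is to fix matrix coordinates adapted to the setup and then read off each of the four equalities from the block form of a generic element of $H'$. Concretely, extend $v_0$ to a basis $v_0,\dots,v_{m-1}$ of $V$, take $l = e_0$ (which satisfies $l(v_0)=1\neq 0$, as required), and use the matrix coordinates $g_{ij}$ from section \ref{basisexpr}. Then $\operatorname{Aff}$ is cut out by the conditions $g_{0j}=\delta_{0j}$ (from $l\circ g = l$), while $H=\stab([v_0])$ is cut out by $g_{i0}=0$ for $i\geq 1$ (from $g\cdot v_0 \in [v_0]$). Intersecting these two systems forces $g_{00}=1$ and all other entries in the first row and first column to vanish, so $H'$ consists exactly of block-diagonal matrices $\operatorname{diag}(1, g_W)$ with $g_W\in\GL(V/[v_0])$. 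The identification $\GL(V/[v_0])\cong \GL(W)$ via the contragredient action then gives $H'\cong \{1\}\times\GL(W)$.

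Next I would compute how such an element $h=\operatorname{diag}(1,g_W)\in H'$ acts on $L$, $W$, and $T$. Since $h\cdot v_0 = v_0$, the eigenvalue on the line $[v_0]$ is $1$, so $h$ acts trivially on $L=[v_0]^{\vee}$; this gives $K_L'=H'$, and since $L$ is one-dimensional the equality with $\widetilde{K}_L'$ is automatic. For $K_T'$ and $K_W'$, note that $h$ acts on $V/[v_0]$ via $g_W$ and hence on $W=(V/[v_0])^{\vee}$ via $g_W^{-\top}$; combined with the trivial action on $L$, we see that $h$ acts trivially on $W$ (resp.\ on $T=L\otimes W^{\vee}$) iff $g_W=1$, i.e.\ $h=1$. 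Both $K_T'$ and $K_W'$ are therefore trivial, and $K_{LTW}'\subseteq K_T'$ is a fortiori trivial.

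Finally, for $\widetilde{K}_{T,W}'$ I would use the same picture: $h$ acts on $T$ by a scalar iff $g_W$ itself is scalar, say $g_W=\mu\cdot 1_W$ for some $\mu\in k^{\times}$, since $L$ contributes only a scalar factor and $W^{\vee}\cong V/[v_0]$ has dimension $\geq 1$. The resulting one-parameter family $\operatorname{diag}(1,\mu I)$ is precisely the central $k^{\times}$ sitting inside $\{1\}\times \GL(W)$, giving $\widetilde{K}_{T,W}'\cong \{1\}\times k^{\times}$.

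The only non-mechanical point is bookkeeping across the dualities — making sure that ``$H'\cong \GL(W)$'' is the contragredient of the evident action on $V/[v_0]$, and that the character by which $H'$ acts on $L$ is trivial because $g_{00}=1$, not because of a computation involving $g_W$. Once these identifications are fixed at the outset, all four assertions reduce to inspecting the block $\operatorname{diag}(1,g_W)$, and no further ingredients are needed.
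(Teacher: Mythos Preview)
Your proof is correct and follows essentially the same route as the paper: identify $H'$ with the block-diagonal subgroup $\{1\}\times\GL(W)$ and then read off the actions on $L$, $W$, $T$. The only cosmetic difference is that for the third line the paper appeals to the earlier ``two-out-of-three'' property (Proposition~\ref{twooutofthreeP}.\ref{twooutofthree}) to pass from $K_L'=H'$ to $K_T'=K_W'=K_{LTW}'$, whereas you verify $K_T'=K_W'=\{1\}$ by direct inspection of the block $g_W$; both arguments amount to the same computation.
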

\begin{proof} Since $\operatorname{Aff}$ fixes $l$, and $l(v_0)\neq 0$, the scalars by which $H'=\operatorname{stab}([v_0])\subset \operatorname{Aff}$ act on $v_0$ must all equal to 1.  This proves the first equation.
The kernel of $l$ in $V$ is preserved by $\operatorname{Aff}$ and splits $0\rightarrow [v_0]\rightarrow V\rightarrow W\rightarrow 0$ as a short exact sequence of $H'$-modules.  Thus $H'$ is contained in a subgroup of $\GL(V)$ isomorphic to $\{1\}\times \GL(W)$.  Every element of this group also evidently fixes $l$, and so belongs to $H'$.  This proves the seconds equation.
It follows from $H'=K_L'$ that $K_T'=K_{LT}'$ and $K_{W}'=K_{LW}'$. By the 2-out-of-3 property in Proposition \ref{twooutofthreeP}.\ref{twooutofthree}, both of the latter equal to $K_{LTW}'$. By the description $H'=\GL(W)$, $K_{LTW}'$ is the trivial group.  This proves the third equations.
For the last equation: Each element of $H'\cong \GL(W)$ which acts by scalars on $W$ must of course belong to the center $k^{\times}\subset \GL(W)$.
\end{proof}
\begin{corollary} Under affine reduction $G'=\operatorname{Aff}$,
\begin{align*}
 & F' = F_w' = F_t' = \operatorname{Aff}\\
 & F_s' = \operatorname{Aff} / k^{\times}
\end{align*}
\end{corollary}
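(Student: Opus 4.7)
The plan is to deduce the corollary directly from the preceding proposition by substituting its explicit identifications of the subgroups into the definitions of $F'$, $F_w'$, $F_t'$, $F_s'$ given in the symmetry-reduction definition. There is essentially nothing to prove beyond unpacking the notation.

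First I would recall that by the symmetry-reduction definition one has the four equalities
\[ F' = G'/K_T', \qquad F_w' = G'/K_{LTW}', \qquad F_t' = G'/K_W', \qquad F_s' = G'/\widetilde{K}_{T,W}', \]
with $G' = \operatorname{Aff}$ in the present setting. The preceding proposition established that $K_T' = K_W' = K_{LTW}' = \{1\}$, so each of the first three quotients is simply $\operatorname{Aff}$ modulo the trivial group, which I would note is canonically $\operatorname{Aff}$ itself. This yields $F' = F_w' = F_t' = \operatorname{Aff}$.

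Next I would handle $F_s'$. The same proposition identifies $\widetilde{K}_{T,W}' \cong k^{\times}$ (realized concretely as $\{1\} \times k^{\times} \subset \{1\} \times \GL(W) \cong H'$), so the quotient $\operatorname{Aff}/\widetilde{K}_{T,W}' = \operatorname{Aff}/k^{\times}$ is exactly as claimed.

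There is no real obstacle here; the content is purely formal, and I would simply cite the preceding proposition. If anything merits a brief remark, it is to make clear that the quotient $\operatorname{Aff}/k^{\times}$ refers to the action of the scalar subgroup $\widetilde{K}_{T,W}'$ acting on $\operatorname{Aff}$ on the right in the standard way, consistent with Proposition \ref{framesiso}.\ref{scaledframes}.
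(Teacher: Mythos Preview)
Your proposal is correct and matches the paper's approach exactly: the paper presents this as an immediate corollary of the preceding proposition without writing out any proof, and your argument is precisely the direct substitution of that proposition's subgroup identifications into the symmetry-reduction definitions.
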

That is, our notion of affine frames, weighted affine frames, and twisted affine frames all agree.
\begin{remark} This makes the affine case more familiar than $\P V$ from the point of view of homogeneous geometry, in the sense that a symmetry $a\in A$ is uniquely determined by its action on a single frame.
\end{remark}
\begin{corollary} (Multi-focal maps under special symmetry reduction)\label{gpdescent}
Let $G'\subset \GL(V)$ be any subgroup which acts freely on the space of weighted frames $F'_w$. That is, such that $K'_{LTW}={1}$) and so the basepoint of $F'_w$ specifies an isomorphism $F'_w\cong G'$.
Under reduction to $G'$, the rational map\footnote{Note that the isomorphism $\bigotimes_{i=1}^{n}L\otimes \L^{p_i}W \cong \bigotimes_{i=1}^{n}\L^{p_i}W$ is non-canonical and depends on a choice of non-zero element in $L=[v_0]^{\vee}$, for example the dual of $v_0$.} $f'_w(I)$ described in Theorem \ref{mainconstruction},
\begin{align*}
 f'_w(I):G^{'n}\rightarrow	\bigotimes_{i=1}^{n}L\otimes \L^{p_i}W &\cong \bigotimes_{i=1}^{n}\L^{p_i}W \\
 	&\cong \bigotimes_{i=1}^{n}\L^{p_i}T^{\vee}
\end{align*}
descends to a regular function on $G^{'n-1}\cong G' \backslash G^{'n}$ which we call $f''(I)$,
\begin{align*}
 f''(I):G^{'n-1}\rightarrow \bigotimes_{i=1}^{n}\L^{p_i}W \cong \bigotimes_{i=1}^{n}\L^{p_i}T^{\vee},
\end{align*}
\end{corollary}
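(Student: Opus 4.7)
The plan is to show that, under the hypothesis $K'_{LTW}=\{1\}$, all the data of Theorem \ref{mainconstruction} trivialize in a computable way: the space $F'_w$ becomes identified with the group $G'$, the codomain becomes (non-canonically) identified with $\bigotimes_i \L^{p_i}T^{\vee}$, and the $\GL(V)$-invariance of $f_w(I)$ (which I can restrict to a $G'$-invariance) becomes left-diagonal invariance of a regular map $G'^n \to \bigotimes_i \L^{p_i}T^{\vee}$. The descent then follows by exhibiting $G'^{n-1}$ as a geometric realization of the left-diagonal quotient $G' \backslash G'^n$.

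Concretely, first I would note that $F'_w = G'/K'_{LTW} = G'/\{1\} \cong G'$, where the isomorphism sends the basepoint to $1 \in G'$, and under this identification the left $G'$-action on $F'_w$ becomes left multiplication of $G'$ on itself. Applying this coordinate-wise and using the definition of symmetry reduction gives $F_w^{'n} \cong G'^n$ with the diagonal $G'$-action corresponding to simultaneous left multiplication. By Theorem \ref{mainconstruction}, $f_w(I)$ is $\GL(V)$-equivariant (indeed invariant in the sense that its values lie in a trivial summand once the $L$ factor is trivialized); in particular its restriction $f'_w(I)$ is invariant under this diagonal left action of $G'$.

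Next, I would describe the quotient explicitly: the map
\[ G'^n \longrightarrow G'^{n-1}, \qquad (g_1,\dots,g_n)\longmapsto (g_1^{-1}g_2,\, g_1^{-1}g_3,\,\dots,\,g_1^{-1}g_n) \]
realizes $G'\backslash G'^n$ as a smooth quotient in the category of algebraic varieties, with the section $G'^{n-1}\hookrightarrow G'^n$, $(h_2,\dots,h_n)\mapsto (1,h_2,\dots,h_n)$. Composing $f'_w(I)$ with this section, and using $G'$-invariance to see that this does not depend on the choice of section, produces a regular map $f''(I): G'^{n-1} \to \bigotimes_{i=1}^n L\otimes \L^{p_i}W$. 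Fixing $v_0^{\vee}\in L$ trivializes each tensor factor $L$ and identifies the codomain with $\bigotimes_{i=1}^n \L^{p_i}W \cong \bigotimes_{i=1}^n \L^{p_i}T^{\vee}$, as indicated in the footnote to the statement.

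The only subtle point, and so the main thing I would be careful with, is to verify that $f'_w(I)$ is in fact everywhere defined (\emph{regular}, not merely rational) on $F_w^{'n}\cong G'^n$, so that $f''(I)$ inherits regularity. This follows because $f_w(I)$ in Theorem \ref{mainconstruction} is produced from $\tau(I)$ by applying Proposition \ref{converttoframes}, whose output takes values in $H^0(\O_{F_w},F_w)\otimes L^{p+q}\otimes \L^p T^{\vee}$, i.e.\ regular functions on $F_w$; the Kunneth step and restriction to the orbit $F'^n_w$ preserve regularity. Passage from $f'_w$ to $f''$ is then a formal consequence of invariance plus the free quotient, and no further calculation is required.
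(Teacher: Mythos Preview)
Your argument is correct and matches the paper's: both invoke the $G'$-invariance of $f'_w(I)$ to descend to $G'\backslash G'^{n}$ and then identify this quotient with $G'^{n-1}$ via an explicit section. The only difference is the choice of section---you take $(h_2,\dots,h_n)\mapsto(1,h_2,\dots,h_n)$, whereas the paper uses $(b_1,\dots,b_{n-1})\mapsto(b_{n-1}\cdots b_1,\,b_{n-1}\cdots b_2,\,\dots,\,b_{n-1},\,\operatorname{id})$---which changes the parametrization of $f''$ but not its image or its regularity.
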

\begin{proof} $f'_w$ descends to a map on $G'\backslash G'^{n}$ by $G'$-invariance. To obtain $f''$, we must compose the result with an isomorphism $G^{'(n-1)}\cong G'\backslash G^{'n}$. We select the isomorphism specified by the section of the $G'$ quotient:
\begin{align*}
G^{'n-1}&\rightarrow G^{'n} \\
(b_1,\dots,b_{n-1})&\mapsto(b_{n-1}\dots b_3b_2b_1,\, b_{n-1}\dots b_3b_2,\, b_{n-1}\dots b_3,\,\dots,\, b_{n-1},\, \operatorname{id})
\end{align*}
\end{proof}
\begin{remark} (Terminology). In the presence of a symmetry reduction satisfying the hypotheses of Corollary \ref{gpdescent}, we will generally use the map $f''(I)$ to describe the multi-focal variety of $I$ rather than $f'_w(I)$. The images of $f''(I)$ and of $f'_w(I)$ are the same, but the domain of $f''(I)$ is simpler.
\end{remark}
\begin{example}\label{eucred} (Euclidean reduction) We define the Euclidean group to be the subgroup $E\subset \operatorname{Aff}$ of the affine group $\operatorname{Aff}\subset \GL(V)$ (see Example \ref{affred}) which acts by special orthogonal transformations of $\ker(l)\cong W = V/[v_0]$, for some chosen non-degenerate quadratic form on this $k$ vector space.

Most authors consider only the case $k=\mathbb{R}$ and stipulate that the quadratic form is positive-definite. We shall do the same, except by remarking here that much of what applies to these real Euclidean spaces applies more generally.

Note that the affine space $\mathbb{A}\subset \P V$ possesses a Riemannian metric, unique up to a uniform scale, for which the group $E$ acts by isometries; the usual flat metric.
\end{example}
\section{Multi-focal tensors}
\subsubsection{Examples in low dimension}
As a warm-up, before undertaking the calculations in dimension $\dim V = 4$ involving the invariants $I_2,I_3,I_4$ which are the principal motivation of this paper, let's write the formulas for the multi-focal maps and varieties for invariants of skew-symmetric tensors in dimensions $\dim V = 2,3$.

\begin{remark} We shall generally consider only the invariants $I\in \L^{p_1+1}V^{\vee}\otimes .. \otimes \L^{p_n+1}V^{\vee}$ which can not be expressed as the product of invariants
\[ I_0 \in \L^{p_1+1}V^{\vee}\otimes .. \otimes \L^{p_i+1}V^{\vee},\qquad  I_1\in \L^{p_{(i+1)}+1}V^{\vee}\otimes \L^{p_{n}+1}V^{\vee} \]
This is because the multi-focal map $f_I$ would amount to the tensor product of the maps $f_{I_0}$ and $f_{I_1}$.
\end{remark}
\begin{example} ($\dim V = 2$)

Consider the exterior product $I\in\L^{1}k^{2\vee}\otimes \L^{1}k^{2\vee}$ (with respect to a choice of isomorphism $\L^{2}k^2 \cong k$). Use the bases introduced in section \ref{basisexpr}, and use the notation $'$ to indicate basis elements in the second factor of $\L^{1}k^{2\vee}$.\footnote{Hopefully this use of $'$ is not easily confused with the use of $'$ to indicate symmetry reduction as in $G',f',f''$.} Then
\[ I = e_0 \otimes e'_1 - e_1 \otimes e'_0\]
Apply the tensor product of the maps $\psi$ calculated in Example \ref{k2p0}:
\begin{align*}
\psi\otimes \psi' (I) &= \tau(I)\\
&=f(I)(g,g')\\
&= (g_{00}g'_{10}-g_{10}g'_{00})\otimes \tilde{e}_0^{2} \otimes 1 \otimes 1 
\end{align*}
(Here $g,g'$ can be considered elements of $GL(V)/H$.)

Although the 2-focal variety corresponding to this $I$ is evidently the entire space $k\cong \L^{0}(k^{2}/[v_0])^{\vee}\otimes\L^{0}(k^{2}/[v_0])^{\vee}$, we shall see that this example is actually not entirely trivial.

Consider the symmetry reduction to the 1-dimensional translation subgroup $G'$ of $\GL(2)$ consisting of elements of the form
\begin{align*}
& \begin{bmatrix} g_{00} & g_{01} \\ g_{10} & g_{11} \end{bmatrix}
= \begin{bmatrix} 1 & 0 \\ c & 1 \end{bmatrix}
\end{align*}
Let $g,g'\in G'$ be two such elements.
\[ \tau'(I) = f'(g,g')=(c' - c) \otimes \tilde{e}_0^2 \otimes 1 \otimes 1 \]
Thus the value of the symmetry-reduced multi-focal map $f'$ on the pair $(g,g')$ exactly encodes the relative position of the two symmetry-reduced frames of $\P^{1}$ represented by the elements $g,g'$.

Now consider the symmetry reduction to the 1-dimensional rotation subgroup $G'=SO(2)\subset \GL(2)$, consisting of elements of the form
\begin{align*}
& \begin{bmatrix} g_{00} & g_{01} \\ g_{10} & g_{11} \end{bmatrix}
= \begin{bmatrix} a & -b \\ b & a \end{bmatrix}
\end{align*}
such that $a^2+b^2 = 1$.

Let $g,g'\in SO(2)$ be two such elements.
\[ f'(I)(g,g')= (ab'-ba') \otimes \tilde{e}_0^2 \otimes 1 \otimes 1  \]
In the case $k=\mathbb{R}$, $ab'-ba'$ equals to $\sin \theta $, where $\theta$ is the angle between vectors
\[ \begin{bmatrix} a \\ b \end{bmatrix}\quad\begin{bmatrix} a' \\ b'\end{bmatrix} \]
Thus the relative position of the two frames represented by a pair of elements of $SO(2)$ is exactly encoded in the value that $f'(I)$ takes on this pair.
\end{example}
\begin{example} ($\dim V = 3$)
Consider the exterior product $I\in \L^{1}k^{3\vee}\otimes \L^{2}k^{3\vee}$ (with respect to a choice of isomorphism $\L^{3}k^{3}\cong k$). Use the bases introduced in section \ref{basisexpr}, and again use the notation $'$ to indicate the second factor, $\L^{2}k^{3\vee}$. Then
\[ I = e_0\otimes e'_1e'_2 - e_1\otimes e'_0e'_2 + e_2\otimes e'_0e'_1\]
For simplicity, we shall omit from the notation the factors of $\tilde{e}_0$. That is, the factor of $L^{n}$ from 
\[ \tau(I) \in \O_{(\GL(V))^{n}} \otimes L^{n}\otimes \bigotimes_{i=1}^{n} \L^{p_i}{W} \]
Apply the tensor product of the maps $\psi$ calculated in Examples \ref{k3p0} and \ref{k3p1}:
\begin{align*}
\psi\otimes\psi'(I) &=  \tau(I) \\
&= g_{00}(g'^{01}_{12}\otimes \tilde{e}_1+g'^{02}_{12}\otimes \tilde{e}_2) \\
& -g_{10}(g'^{01}_{02}\otimes \tilde{e}_1+g'^{02}_{02}\otimes \tilde{e}_2) \\
& +g_{20}(g'^{01}_{01}\otimes \tilde{e}_1+g'^{02}_{01}\otimes \tilde{e}_2)
\end{align*}
Once again the multi-focal variety is the entire space $k^{2}\cong \L^{0}(k^{3}/[v_0])\otimes \L^{1}(k^3/[v_0])$.

The reader may wish to reduce the symmetry group to the affine group, the hyperbolic plane isometry group, $SO(3)$, or the Heisenberg group, and then determine how much information about the relative position of a pair of symmetry-reduced frames is encoded in the resulting multi-focal tensor elements. I have not done so.

For encodings of the relative positions of 3-tuples of frames, one could repeat this procedure with the 3-fold exterior product
\begin{align*} 
 I &\in \L^{1}k^{3\vee}\otimes \L^{1}k^{3\vee} \otimes \L^{1}k^{3\vee} 
\end{align*}
However, in this case the ambient tensor space containing the multi-focal varieties is again 1-dimensional, so that these varieties are surely the whole space and they can contain only ``1-dimension's worth'' of information about the 3-frame configurations.
\end{example}
\subsection{Main Application}\label{mainapplication}\label{mainappsec}
\subsubsection{Obtaining the multi-focal tensors}
\begin{observation}  Assume that $\operatorname{dim}V=4$, and $k=\mathbb{R}$.
\begin{enumerate}
\itemsep0em
\item{\label{bif} Let $I_2\in \L^{2}V^{\vee}\otimes\L^{2}V^{\vee}$ be the exterior product, with respect to a choice of isomorphism $\L^{4}V^{\vee}\cong k$.

The values of the function $f_w(I_2)$ (or $f(I_2)$, or $f_w(I_2)$) are the \emph{fundamental matrices} or \emph{bifocal tensors} appearing in (\cite{luong}, \cite{faug93}, \cite{lf2}, \cite{fl}, \cite{hz}).

The values of the Euclidean reduction $f''(I_2):E\rightarrow T^{\vee}\otimes T^{\vee}$ with respect to the Euclidean isometry group $E$ are the \emph{essential matrices} appearing in (\cite{lh},\cite{dmz}).}
\item{\label{trif} The space of relative $\GL(V)$-invariants in $\L^{3}V^{\vee}\otimes\L^{2}V^{\vee}\otimes\L^{3}V^{\vee}$ is 1-dimensional. Let $I_3$ be a non-zero such invariant.

The values of the function $f_w(I_3)$ (or $f(I_3)$, or $f_w(I_3)$ are the \emph{trifocal tensors} appearing in (\cite{faug93}, \cite{hz}).

The values of the Euclidean reduction $f''(I_3):E^{2}\rightarrow \L^{2}T^{\vee}\otimes T^{\vee}\otimes\L^{2}T^{\vee}$ are the \emph{(Euclidean) trifocal tensors} appearing in (\cite{faug93} page 454, \cite{hz}).}
\item{Let $I_4\in(\L^{3}V^{\vee})^{\otimes 3}$ be the dual of the 4-fold exterior product, with respect to a choice of isomorphism $\L^{4}V^{\vee}\cong k$.

The values of the function $f_w(I_4)$ (or $f(I_4)$, or $f_w(I_4)$ are the \emph{quadrifocal tensors} appearing in (\cite{faug93}, \cite{hz}).}
\end{enumerate}
\end{observation}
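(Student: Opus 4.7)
The plan is to verify each item by directly computing the tensor $\tau(I)=\psi_{p_1}\otimes\cdots\otimes\psi_{p_n}(I)$ prescribed by the Main Construction (Theorem \ref{mainconstruction}), using the explicit basis expressions recorded in Examples \ref{k4p1} and \ref{k4p2}, and then matching the resulting entries against the multi-focal-tensor formulas in the cited vision references. Since those classical tensors are defined in the literature by displayed formulas in terms of $2\times 2$ or $3\times 3$ minors of camera matrices (with the first row of $g$ playing the role of the image point, as built into the definition of $\pi$), the argument is essentially a bookkeeping exercise; the only nuisance is reconciling the various sign and index conventions across \cite{luong,faug93,lf2,fl,hz,lh,dmz}.

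For item (\ref{bif}), I would write $I_2$ as the element of $\L^{2}V^\vee\otimes\L^{2}V^\vee$ whose value on a decomposable pair $(\xi_1\wedge\xi_2,\xi_3\wedge\xi_4)$ is $\xi_1\wedge\xi_2\wedge\xi_3\wedge\xi_4$ read against the chosen volume form. Applying $\psi_1\otimes\psi_1$ via the matrices $[\psi_{ij}]$ of Example \ref{k4p1} presents $\tau(I_2)$ as a $3\times 3$ block whose $(a,b)$-entry is an alternating sum of products $g^{0a}_{ij}\,g'^{0b}_{kl}$ over complementary index pairs $(ij,kl)$ in $\{0,1,2,3\}$; this is the standard cofactor-pair expansion of the fundamental matrix \cite{hz}. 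For the essential-matrix claim, I would invoke the Euclidean reduction of Example \ref{eucred}; because $E$ acts freely on weighted frames, Corollary \ref{gpdescent} produces $f''(I_2):E\to T^\vee\otimes T^\vee$, and the semidirect structure $E\cong\mathbb{R}^3\rtimes SO(3)$ makes the translation-cross-rotation form $E=[t]_\times R$ of \cite{lh,dmz} visible upon block-partitioning $g$.

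For item (\ref{trif}), the first sub-task is to show that $\bigl(\L^{3}V^\vee\otimes\L^{2}V^\vee\otimes\L^{3}V^\vee\bigr)^{\GL(V)}$ is one-dimensional as a space of relative invariants. Using $\L^{3}V^\vee\cong V\otimes\det V^\vee$, this reduces to counting $\SL(V)$-invariants in $V\otimes\L^{2}V^\vee\otimes V$; the splitting $V\otimes V\cong\operatorname{Sym}^2 V\oplus\L^2 V$ together with $\operatorname{Hom}_{\SL(V)}(\L^2 V,\L^2 V)=k$ and $\operatorname{Hom}_{\SL(V)}(\operatorname{Sym}^2 V,\L^2 V)=0$ shows exactly one copy survives. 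A concrete $I_3$ is obtained by pulling two vectors out of the outer $\L^{3}V^\vee$ slots against the volume form and wedging them into the middle $\L^{2}V^\vee$. Applying $\psi_2\otimes\psi_1\otimes\psi_2$ via Examples \ref{k4p1} and \ref{k4p2} then produces entries that are sums of products of one $2\times 2$ minor and two $3\times 3$ minors of the three $g$-matrices, agreeing with the classical trifocal formula (\cite{hz}; \cite{faug93}, p.~454); the Euclidean-trifocal version follows by restriction to $E$.

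Finally, for item (3), interpreting $I_4$ as the invariant in $(\L^{3}V^\vee)^{\otimes 4}$ obtained by contracting the four $\L^{3}V^\vee$ factors against the inverse of the volume form (I read the displayed exponent as a typo corrected by the phrase ``4-fold exterior product'' in the statement), I would apply $\psi_2^{\otimes 4}$ using Example \ref{k4p2}; each of the $3^4$ entries is a signed sum of products of four $3\times 3$ minors of the four $g$-matrices, reproducing the quadrifocal-tensor formula of \cite{hz}. The main obstacle throughout is, as noted, purely one of conventions: keeping straight the balanced tensor product, the distinction between $L\otimes\L^{p_i}W$ and $\L^{p_i}T^\vee$ (resolved by fixing the dual of $v_0$ as a generator of $L$, per the footnote to Corollary \ref{gpdescent}), and the sign choices underlying each classical tensor; once these are pinned down the entries of $\tau(I)$, $f_w(I)$ and the image of $f(I)$ coincide with the multi-focal tensors in the cited references.
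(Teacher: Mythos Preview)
Your approach is essentially the same as the paper's: compute $\tau(I)$ by applying the explicit $\psi$-maps of Examples \ref{k4p1}--\ref{k4p2} and match the resulting minor-product entries against the formulas in the cited vision references. The paper in fact omits the general (projective) identifications entirely, saying only that ``no real insight is needed besides checking that the translation is faithful,'' and carries out in detail only the two Euclidean cases---arriving at $r^{t}a$ for $f''(I_2)$ and (via a Macaulay2 computation) at $-r\otimes w + u\otimes s$ for $f''(I_3)$.

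Two points where you go beyond the paper are worth noting. First, your Schur-lemma argument for the one-dimensionality of the relative invariants in $\L^{3}V^{\vee}\otimes\L^{2}V^{\vee}\otimes\L^{3}V^{\vee}$ is a genuine addition; the paper simply asserts this. Second, your reading of the exponent in item (3) as a typo---$(\L^{3}V^{\vee})^{\otimes 4}$ rather than $\otimes 3$---is correct and necessary for the ``quadrifocal'' interpretation to make sense.
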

A precise demonstration of the above identifications requires a complete translation of the language of the citations into the algebraic language appearing here. For example, the ``cameras" there can be regarded as projective linear projections $\P V\dashrightarrow \P ^{2}$ and then identified with frames up to scale. No real insight is needed besides checking that the translation is faithful, so we omit the details.

Instead, we focus on the Euclidean reductions and in this case show that the final formula for (\ref{bif}) agrees with that of \cite{dmz}, and the final formula (\ref{trif}) agrees with that of \cite{faug93}.
\subsubsection{Euclidean bifocal tensors}
Using the bases introduced in section \ref{basisexpr}, the formula for $I_2$ is
\begin{align*}
  I_2 =\quad	&e_0e_1\otimes e'_2 e'_3 + e_2e_3\otimes e'_0 e'_1 \\
 	-	&e_0e_2\otimes e'_1 e'_3 - e_1e_3\otimes e'_0 e'_2 \\
 	+	&e_0e_3\otimes e'_1 e'_2 + e_1e_2\otimes e'_0 e'_3,
\end{align*}
where the $'$ indicates the basis of the second factor of $\L^{2}V^{\vee}\otimes \L^{2}V^{\vee}$.

Using a basis for $V$ extending $v_0$ to a basis for $W\cong V/[v_o]$, the elements of the Euclidean group $E\subset \GL(V)$, defined in Example \ref{eucred}, have the form
\[ \begin{bmatrix}
 g_{00}   & g_{01} & g_{02} & g_{03}     \\
 g_{10} & g_{11} & g_{12} & g_{13}\\
 g_{20} & g_{21} & g_{22} & g_{23}\\
 g_{30} & g_{31} & g_{32} & g_{33}\\
\end{bmatrix}=\begin{bmatrix}
 1   & 0      & 0      & 0     \\
 u_1 & r_{11} & r_{12} & r_{13}\\
 u_2 & r_{21} & r_{22} & r_{23}\\
 u_3 & r_{31} & r_{32} & r_{33}\\
\end{bmatrix} \]
where the matrix $r=[r_{ij}]$ satisfies $r^{t}r=\operatorname{id}$ and $\operatorname{det}r=1$.  It follows that under Euclidean reduction, the maps $\psi$ and $\psi'$ are specified as in Example \ref{firstDim4}:
\begin{align*}
[\psi_{01}]= & \begin{bmatrix} r_{11} & r_{12} & r_{13} \end{bmatrix} \\
[\psi_{02}]= & \begin{bmatrix} r_{21} & r_{22} & r_{23} \end{bmatrix} \\
[\psi_{03}]= & \begin{bmatrix} r_{31} & r_{32} & r_{33} \end{bmatrix} \\
[\psi_{12}]= & \begin{bmatrix} r_{21}u_1-r_{11}u_2	&	r_{22}u_1-r_{12}u_2	&	r_{23}u_1-r_{13}u_2\end{bmatrix} \\
[\psi_{13}]= & \begin{bmatrix} r_{31}u_1-r_{11}u_3	&	r_{32}u_1-r_{12}u_3	&	r_{33}u_1-r_{13}u_3\end{bmatrix} \\
[\psi_{23}]= & \begin{bmatrix} r_{31}u_2-r_{21}u_3	&	r_{32}u_2-r_{22}u_3	&	r_{33}u_2-r_{23}u_3\end{bmatrix} \\
&\\
[\psi_{01}']= & \begin{bmatrix} r'_{11} & r'_{12} & r'_{13} \end{bmatrix} \\
[\psi_{02}']= & \begin{bmatrix} r'_{21} & r'_{22} & r'_{23} \end{bmatrix} \\
[\psi_{03}']= & \begin{bmatrix} r'_{31} & r'_{32} & r'_{33} \end{bmatrix} \\
[\psi_{12}']= & \begin{bmatrix} r'_{21}u'_1-r'_{11}u'_2	&	r'_{22}u'_1-r'_{12}u'_2	&	r'_{23}u'_1-r'_{13}u'_2\end{bmatrix} \\
[\psi_{13}']= & \begin{bmatrix} r'_{31}u'_1-r'_{11}u'_3	&	r'_{32}u'_1-r'_{12}u'_3	&	r'_{33}u'_1-r'_{13}u'_3\end{bmatrix} \\
[\psi_{23}']= & \begin{bmatrix} r'_{31}u'_2-r'_{21}u'_3	&	r'_{32}u'_2-r'_{22}u'_3	&	r'_{33}u'_2-r'_{23}u'_3\end{bmatrix} \\
\end{align*}
We apply $\psi\otimes \psi'$ to $I_2$ and then apply the section $(r,u)\mapsto ((r,u), (\operatorname{id},0))$, as in the definition of $f''$ appearing in Corollary \ref{gpdescent}. The map $f''(I):E\rightarrow T^{\vee}\otimes T^{\vee}$ is given by the formula (with respect to the basis for $T^{\vee}$ introduced in Example \ref{firstDim4}):
\begin{align*}
  (r,u)\mapsto	&	&	&\begin{bmatrix} r_{31}u_2-r_{21}u_3	&	r_{32}u_2-r_{22}u_3	&	r_{33}u_2-r_{23}u_3\end{bmatrix}\otimes \begin{bmatrix}1 & 0 & 0\end{bmatrix} \\
				&	& -	&\begin{bmatrix} r_{31}u_1-r_{11}u_3	&	r_{32}u_1-r_{12}u_3	&	r_{33}u_1-r_{13}u_3\end{bmatrix}\otimes \begin{bmatrix}0 & 1 & 0\end{bmatrix} \\
 				&	& +	&\begin{bmatrix} r_{21}u_1-r_{11}u_2	&	r_{22}u_1-r_{12}u_2	&	r_{23}u_1-r_{13}u_2\end{bmatrix}\otimes \begin{bmatrix}0 & 0 & 1\end{bmatrix} \\
 & & & \quad\\
 			=	&	&	&\begin{bmatrix} r_{31}u_2-r_{21}u_3	&	-(r_{31}u_1-r_{11}u_3)	& r_{21}u_1-r_{11}u_2 \\
 										r_{32}u_2-r_{22}u_3 &	-(r_{32}u_1-r_{12}u_3)	& r_{22}u_1-r_{12}u_2 \\
 										r_{33}u_2-r_{23}u_3 &	-(r_{33}u_1-r_{13}u_3)	& r_{23}u_1-r_{13}u_2
 						\end{bmatrix}\\
 & & & \quad\\
 			=	&	&	&r^{t}a
\end{align*}
where
\[ a = \begin{bmatrix}0	&	u_3	&	-u_2	\\	-u_3	&	0	&	u_1	\\	u_2	&	-u_1	&	0\end{bmatrix}\]
This is exactly the formula appearing in \cite{dmz}.
\subsubsection{Euclidean trifocal tensors}\label{explicitEucTri}
Adopting the abbreviations $e_{ij}=e_{i}e_{j}$ and $e_{ijk}=e_{i}e_{j}e_{k}$, the formula for $I_3$ is 
\[ I_3 =\begin{bmatrix} e_{123} & \matminus e_{023} & e_{013} & \matminus e_{012} \end{bmatrix}
\begin{bmatrix} 0 & e'_{01} & e'_{02} & e'_{03} \\ 
				\matminus e'_{01} & 0 & e'_{12} & e'_{13} \\
				\matminus e'_{02} & \matminus e'_{12} & 0 & e'_{23} \\
				\matminus e'_{03} & \matminus e'_{13} & \matminus e'_{23} & 0
\end{bmatrix}
\begin{bmatrix} e''_{123} \\ \matminus e''_{023} \\ e''_{013} \\ \matminus e''_{012}\end{bmatrix}\]
where $'$ and $''$ indicate the second and third factors of $ \L^{3}\otimes \L^{2} \otimes \L^{3}$, and the matrix entry multiplication is tensor product.

See \cite{grs} for methods of calculation of such invariants in general.

For convenience we will use a slightly different convention than the one stipulated in the proof of Corollary \ref{gpdescent} for the isomorphism $E\backslash E^{3}\cong E^{2}$ and the section of the projection $E^{3}\rightarrow E\backslash E^{3}\cong E^{2}$.  Namely,
\begin{align*}
E\backslash E^{3} &\rightarrow E^{2}\\
(g_1,g_2,g_3) &\mapsto (g_1^{-1}g_2,g_3^{-1}g_2)\\
&\\
E^{2} &\rightarrow E^{3}\\
(g_1,g_2) &\mapsto (g_1^{-1},\operatorname{id},g_2^{-1})
\end{align*}
The calculation of the restriction of $\psi\otimes\psi'\otimes \psi''(I_3)$ to this section is a lengthy but straightforward ring computation. The intermediate steps are omitted. The most convincing calculation is computer-assisted; I used Macaulay2. The result, however, is very simple:
\begin{align}\label{ruswform}
\begin{split}
	& f''(I_3): E^{2}\rightarrow \L^{2}T^{\vee}\otimes T^{\vee}\otimes\L^{2}T^{\vee}\\
	& ((r,u),(s,w))\mapsto -r\otimes w + u \otimes s
\end{split}
\end{align}
In this formula $u$ and $w$ are regarded as elements of $T$, and we have made the identifications\footnote{Note that these identifications are isomorphisms of $SL(T)$ representations, and in particular of $H=H_E=SO(3)$ representations.}:
\begin{align*}
	& \L^{2}T^{\vee}\otimes T^{\vee} \cong T\otimes T^{\vee} \cong \operatorname{End}T\\
	& T^{\vee}\otimes \L^{2}T^{\vee} \cong T^{\vee}\otimes T \cong \operatorname{End}T
\end{align*}
in order to express $r$ as an element of $\L^{2}T^{\vee}\otimes T^{\vee}$ and $s$ as an element of $T^{\vee}\otimes \L^{2}T^{\vee}$.

To obtain the formula (\cite{faug93} page 454, item 8.48), switch to index notation. For the reader's convenience we recall without explanation the formula of \cite{faug93} in the original notation:
\[ \mathbf{G}^{n}_i = \mathbf{t}_{ij}\mathbf{R}^{(n)T}_{ik} +\mathbf{R}^{(n)}_{ij}\mathbf{t}^{T}_{ik}  \]
\begin{remark} Here is one difficulty one may encounter in trying to write a non-computational proof of the formula \ref{ruswform}. It is not obviously linear in $(r,u)$ and in $(s,w)$; indeed, the formula which one would obtain by selecting the more naive section $(g_1,g_2) \mapsto (g_1,\operatorname{id},g_2)$, rather than the one we have chosen, is \emph{mixed quadratic-linear}, because it differs by an application of the group inversion map $E\rightarrow E$ applied separately to each component of $E^2$:
\begin{align*}
& (r,u)\mapsto (r^{t},-r^{t}u)\\
& (s,w)\mapsto (s^{t},-s^{t}w)
\end{align*}
\end{remark}
\subsection{Geometric consequences}\label{geomconseqsec}
\subsubsection{Geometric properties of invariants $I_2,I_3,I_4$}
Recall that $\dim V=4$. The following geometric properties are well-known:
\begin{proposition} \label{geometricinterpretation}\emph{(Geometric properties of $I_2,I_3,I_4$)}
\begin{enumerate}
\itemsep0em
\item{\label{ii2}Suppose that $a,b\in \L^{2}V$ are decomposable, so that they represent 2 lines in the 3-dimensional projective space $\P V$.

$I_2(a\otimes b)=0$ if and only if the 2 lines meet.}
\item{\label{ii3}Suppose that $p,q\in \L^{3}V$ are non-zero. They are automatically decomposable, so that they represent 2 planes in $\P V$. Let $a \in \L^{2}V$ be decomposable so that it represents a line.

$I_3(p\otimes a \otimes q)=0$ if and only if the intersection of the 2 planes meets the line (Equivalently, the planes and line have at least 1 point in common).}
\item{\label{ii4}Suppose that $p,q,r,s\in \L^{3}V$ are non-zero. They are automatically decomposable, so that they represent 4 planes in $\P V$.

$I_4(p\otimes q\otimes r\otimes s)=0$ if and only if the 4 planes have at least 1 point in common.}
\end{enumerate}
\end{proposition}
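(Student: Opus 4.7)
The plan is to identify each $I_j$, up to a nonzero scalar, with an explicit wedge-determinant built from a Hodge-type duality, and then read off the vanishing locus geometrically. I fix once and for all a volume element $\omega \in \L^{4}V$ with its dual $\omega^{-1}\in \L^{4}V^{\vee}$, which induces for each $k$ an isomorphism $\ast : \L^{k}V \xrightarrow{\sim} \L^{4-k}V^{\vee}$ by the rule $\langle \ast \alpha,\beta\rangle = \omega^{-1}(\alpha\wedge\beta)$. The only multilinear-algebra fact I will use is the Plücker criterion: for two decomposable multivectors $\alpha \in \L^{j}V$ and $\beta\in \L^{k}V$ with $j+k\leq 4$, one has $\alpha\wedge\beta = 0$ if and only if the subspaces of $V$ they span meet nontrivially.

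Parts (\ref{ii2}) and (\ref{ii4}) then fall out directly. By construction $I_{2}(a\otimes b) = \omega^{-1}(a\wedge b)$, which vanishes iff the $2$-dimensional subspaces of $V$ represented by decomposable $a,b$ share a nonzero vector, i.e., iff the two lines meet in $\P V$. For Part (\ref{ii4}), the dual-of-the-top-wedge invariant applied to $p\otimes q\otimes r\otimes s$ becomes $\omega^{-1}$ paired with $(\ast p)\wedge (\ast q)\wedge (\ast r)\wedge (\ast s)\in\L^{4}V^{\vee}$, which vanishes iff the four linear forms $\ast p,\ast q,\ast r,\ast s\in V^{\vee}$ are linearly dependent, iff their kernels (the four hyperplanes in $V$ defining the four planes in $\P V$) share a nonzero common vector.

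Part (\ref{ii3}) requires one extra step because $I_{3}$ is only specified up to scale as a generator of the 1-dimensional invariant space. The plan is to exhibit the candidate
\[J(p\otimes a\otimes q)\ :=\ \langle (\ast p)\wedge (\ast q),\ a\rangle,\]
where the bracket is the canonical pairing $\L^{2}V^{\vee}\otimes\L^{2}V\to k$. Evidently $J$ is $\GL(V)$-invariant (being built from $\ast$, exterior, and contraction operations), and a single generic evaluation shows $J\not\equiv 0$. By the stated 1-dimensionality, $J$ and $I_{3}$ must then agree up to a nonzero scalar, hence share a zero locus. The geometric interpretation of $J=0$ is a rank computation: letting $L_{a}:=\{v\in V : v\wedge a = 0\}\subset V$ be the $2$-dimensional subspace represented by decomposable $a$, one has $J(p\otimes a\otimes q)=0$ iff the restrictions $(\ast p)|_{L_{a}},(\ast q)|_{L_{a}}\in L_{a}^{\vee}$ are linearly dependent, iff there is a nonzero vector in $\ker(\ast p)\cap \ker(\ast q)\cap L_{a}$, iff the two planes and the line share a point in $\P V$.

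The main obstacle is bookkeeping for the degenerate strata: coincident planes in (\ref{ii3}), fewer-than-generic linearly independent $\ast$-images in (\ref{ii4}), or non-decomposable $a,b$ in (\ref{ii2}). In each such case the wedge side of the claimed equivalence vanishes as soon as two arguments become linearly dependent, while the geometric side is automatically satisfied (for example, in $\P^{3}$ a line and a plane always meet, so if the two planes in (\ref{ii3}) coincide the incidence is trivial and the vanishing of $J$ follows from $\ast p\wedge \ast q = 0$). The one substantive check embedded in the above is producing a specific transverse configuration on which $J$ is nonzero, which is immediate from a $4\times 4$ determinant calculation in coordinates.
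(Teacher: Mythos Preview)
Your argument is correct. The paper does not actually prove this proposition; it introduces the statement with ``The following geometric properties are well-known'' and gives no proof at all. So there is nothing in the paper to compare your approach to---you have supplied a self-contained proof where the paper simply appeals to folklore.

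Two small remarks on presentation. First, when you write ``Evidently $J$ is $\GL(V)$-invariant (being built from $\ast$, exterior, and contraction operations)'', note that $\ast$ depends on the volume form, so $J$ is only a \emph{relative} invariant (it scales by $\det^{2}$); this is exactly what the paper means by ``relative $\GL(V)$-invariant'', and the $1$-dimensionality statement you invoke is for that character, so the conclusion that $J$ and $I_{3}$ are proportional stands. Second, your degenerate-case paragraph mentions ``non-decomposable $a,b$ in (\ref{ii2})'', but the hypothesis of (\ref{ii2}) already assumes decomposability, so that clause is unnecessary. Neither point affects the validity of the proof.
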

\subsubsection{Linear sections of the multi-focal varieties}
In this section we introduce a construction which, although it is motivated directly from the visual-geometric applications in the case $\dim V = 4$, applies in the generality of arbitrary invariants $I\in \L^{p_1+1}V^{\vee}\otimes .. \otimes \L^{p_n+1}V^{\vee}$, for arbitrary $\dim V$.

Let $b_1,..,b_n\in F$ be an $n$-tuple of frames for tangent spaces of $\P V$ at disjoint basepoints. 

Let $t$ denote any lift of $f(I)(b)$ to an element of $\bigotimes_{i=1}^{n}\L^{p_i}T^{\vee}$. See Theorem \ref{mainconstruction} for the definition of $f$.\footnote{Note again that we have made a non-canonical choice of isomorphism $\bigotimes_{i=1}^{n}L\otimes \L^{p_i}W \cong \bigotimes_{i=1}^{n}\L^{p_i}W$ depending on a choice of non-zero element in $L$.}


Let $(c_1,c_2,..,c_n)\in \L^{p_1}T \times\L^{p_2}T\times..\times \L^{p_n}T$ be decomposable elements, so that they represent a $(p_1-1)$-plane, a $(p_2-1)$-plane, etc. in $\P T$.

Let $(d_1,d_2,..,d_n)\in \L^{p_1+1}V\times \L^{p_2+1}V \times .. \times \L^{p_n+1}V$ denote decomposable elements representing the $p_1$-plane, $p_2$-plane ... and $p_n$-plane in $\P V$ to which the planes $c$ are transported by the frames $b$.\footnote{Note that this uses the choice of isomorphism $i:k^{n}\rightarrow T$ fixed in Remark \ref{framesisochoice}.}

\begin{proposition}
$I(d_1\otimes .. \otimes d_n)=0$ if and only if $t(c_1\otimes .. \otimes c_n)=0$.
\end{proposition}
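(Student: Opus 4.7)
The plan is to trace both $t(c_1\otimes\cdots\otimes c_n)$ and $I(d_1\otimes\cdots\otimes d_n)$ back to a single evaluation of the tensor $\tau(I)$ constructed in Theorem \ref{mainconstruction}, and to check they agree up to a nonzero scalar. Since the conclusion concerns only vanishing, any nonzero rescaling is harmless and the equivalence will follow as soon as the two scalars are known to be proportional.

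First I would choose lifts $g_i\in \GL(V)$ of the frames $b_i\in F=\GL(V)/K_T$, so that the basepoint of $b_i$ is $g_i[v_0]$. By the construction of $f_w(I)$ in Theorem \ref{mainconstruction} (via Proposition \ref{converttoframes}), the value $f_w(I)(b_1,\dots,b_n)\in \bigotimes_i L\otimes \L^{p_i}W$ is exactly $\tau(I)(g_1,\dots,g_n)$, where $\tau(I)=(\psi_{p_1}\otimes\cdots\otimes\psi_{p_n})(I)$. Using $T^{\vee}=L^{\vee}\otimes W$ one has $L\otimes\L^{p_i}W\cong L^{p_i+1}\otimes\L^{p_i}T^{\vee}$; under this identification any lift $t$ of $f(I)(b)\in \P\bigl(\bigotimes \L^{p_i}T^{\vee}\bigr)$ agrees with $f_w(I)(b)$ up to a nonzero element of $L^{n+\sum p_i}$. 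Consequently $t(c_1\otimes\cdots\otimes c_n)=0$ iff $\tau(I)(g_1,\dots,g_n)$ annihilates the element of $\bigotimes_i L^{\vee}\otimes \L^{p_i}W^{\vee}$ corresponding to $c_1\otimes\cdots\otimes c_n$.

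Next I would unpack $\psi=\psi_p$ (Definition \ref{psimaps}) as the coaction $\mu$ on $\L^{p+1}V^{\vee}$ followed by the dual of the exterior product $[v_0]\otimes\L^p(V/[v_0])\to\L^{p+1}V$. The core identity is
\begin{equation*}
[\psi(\xi)(g)](v_0\otimes w)=\xi(gv_0\wedge g\tilde w),
\end{equation*}
for any $\xi\in\L^{p+1}V^{\vee}$, $g\in \GL(V)$, and $w\in\L^p(V/[v_0])$ with any lift $\tilde w\in\L^p V$; this is independent of the lift because $gv_0\wedge gv_0=0$. Extending multilinearly to $I$ and over the $n$ tensor factors yields
\begin{equation*}
\tau(I)(g_1,\dots,g_n)\Bigl(\bigotimes_i (v_0\otimes w_i)\Bigr)=I\Bigl(\bigotimes_i (g_iv_0\wedge g_i\tilde w_i)\Bigr).
\end{equation*}

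It then remains to match inputs. Via the chosen isomorphism of Remark \ref{framesisochoice} and $T=L\otimes W^{\vee}$, a decomposable $c_i\in \L^{p_i}T$ factors, up to a nonzero element of $L^{p_i}$, as a decomposable $w_i\in\L^{p_i}W^{\vee}=\L^{p_i}(V/[v_0])$; and by the definition of $d_i$ as the lift of the $c_i$-plane along the frame $b_i$, one has $d_i=g_iv_0\wedge g_i\tilde w_i$ up to the same nonzero factor. Substituting into the displayed formula and combining with the identification of the second paragraph shows that $t(c_1\otimes\cdots\otimes c_n)$ and $I(d_1\otimes\cdots\otimes d_n)$ are proportional by a nonzero product of $L$-weights, which gives the equivalence of vanishing. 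The main obstacle is the bookkeeping of $L$- and $k^{\times}$-weights across the spaces $\L^{p_i+1}V^{\vee}$, $L\otimes\L^{p_i}W$, $L^{p_i+1}\otimes\L^{p_i}T^{\vee}$, and the passage from $f_w(I)$ to its projective lift $t$; no deep difficulty arises because the conclusion is about vanishing, but one must verify that every intermediate rescaling is genuinely nonzero and that the ambiguity in the choice of lift $\tilde w_i$ is killed by the wedge with $g_iv_0$.
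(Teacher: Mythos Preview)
Your proposal is correct and follows essentially the same approach as the paper: the paper's proof is the single sentence ``up to scale, $t$ is by definition the pullback of $I$ under the assignment $c_1\otimes\cdots\otimes c_n\mapsto d_1\otimes\cdots\otimes d_n$,'' and your argument is precisely the unpacking of that assertion through the maps $\psi_{p_i}$ and the identification $d_i\propto g_iv_0\wedge g_i\tilde w_i$. Your key identity $[\psi(\xi)(g)](v_0\otimes w)=\xi\bigl(g\cdot(v_0\wedge\tilde w)\bigr)$ is exactly what makes the paper's one-line claim true, and the remaining bookkeeping of $L$-weights is handled correctly since only vanishing is at stake.
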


\begin{proof} Up to scale, $t$ is by definition the pullback of $I$ under the assignment $c_1\otimes .. \otimes c_n\mapsto d_1\otimes .. \otimes d_n$. The result follows.

\end{proof}

Equivalently: Consider $c_1\otimes .. \otimes c_n$ as a linear functional on the vector space containing $t$. Then $t$ belongs to the hyperplane which is the kernel of this functional if and only if $I$ vanishes on all of the tuples of planes $d$ which are the lifts, determined by the frames $b$, of the tuple $c$.

\begin{corollary} \label{linearsections}Consider the case $\dim V =4$ as in section \ref{mainapplication}.

\begin{enumerate} 
\itemsep0em
\item{\label{essentialmatrixconstraintlinear}Set $I=I_2$. Fix a 2-tuple of frames $b_1,b_2\in F$ at different basepoints of $\P V$. Suppose that representatives $c_1,c_2\in T$ of points in $\P T$ correspond via the frames $b$ to a pair of lines in $\P V$ which are known to intersect.

Then $t=t(b)$ belongs to the kernel of $c_1\otimes c_2$. The converse also holds.}
\item{Set $I=I_3$. Fix a 3-tuple of frames $b_1,b_2,b_3 \in F$ at different basepoints of $\P V$. Suppose that representatives $c_1,c_3\in \L^{2}T$ of 2 lines in $\P T$ and a representative $c_2\in T$ of a point in $\P T$ correspond via the frames $b$ to a pair of planes in $\P V$ and a line of $\P V$ which are known to have a point of mutual intersection.

Then $t=t(b)$ belongs to the kernel of $c_1\otimes c_2 \otimes c_3$. The converse also holds.}
\item{Set $I=I_4$. Fix a 4-tuple of frames $b_1,b_2,b_3,b_4\in F$ at different basepoints of $\P V$. Suppose that representatives $c_1,c_2,c_3,c_4\in \L^{2}T$ of 4 lines in $\P T$ correspond via the frames $b$ to 4 planes in $\P V$ which are known to have a point of mutual intersection.

Then $t=t(b)$ belongs to the kernel of $c_1\otimes c_2\otimes c_3\otimes c_4$. The converse also holds.}
\end{enumerate}
\end{corollary}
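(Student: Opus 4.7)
The plan is to obtain all three items as a mechanical combination of two ingredients already established in the paper: the preceding proposition giving the equivalence $I(d_1\otimes\cdots\otimes d_n)=0 \Leftrightarrow t(c_1\otimes\cdots\otimes c_n)=0$, and Proposition \ref{geometricinterpretation}, which translates the vanishing of $I_2$, $I_3$, $I_4$ on decomposable tensors into incidence statements about lines and planes in $\P V$.

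First I would observe that each $c_i \in \L^{p_i}T$ in the hypotheses is decomposable, and the transport map to $\L^{p_i+1}V$ that produces $d_i$ is linear and of the specific form ``wedge with the basepoint vector after applying the frame isomorphism,'' so it carries decomposable inputs to decomposable outputs. This guarantees that the $d_i$ are decomposable elements of $\L^{p_i+1}V$ of the types needed to invoke the three parts of Proposition \ref{geometricinterpretation}: a pair of lines for $I_2$, a pair of planes plus a line for $I_3$, and four planes for $I_4$. I would also spell out that the $p_i$-plane in $\P V$ represented by $d_i$ is exactly the image under the frame $b_i$ of the $(p_i-1)$-plane in $\P T$ represented by $c_i$ (translated through the basepoint), which is precisely the geometric meaning of the lifting procedure defined just before the corollary.

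Next I would apply Proposition \ref{geometricinterpretation} case by case: for (1), the two lifted lines meet in $\P V$ iff $I_2(d_1\otimes d_2)=0$; for (2), the two lifted planes and the lifted line share a common point iff $I_3(d_1\otimes d_2\otimes d_3)=0$; for (3), the four lifted planes share a common point iff $I_4(d_1\otimes d_2\otimes d_3\otimes d_4)=0$. In each case I chain this equivalence with the preceding proposition to obtain the stated equivalence with $t(c_1\otimes\cdots\otimes c_n)=0$, which is the definition of $c_1\otimes\cdots\otimes c_n$ (viewed as a functional) annihilating $t$.

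The only genuinely non-routine point is verifying that the lifting procedure really does produce decomposable $d_i$ of the correct projective-geometric type, so that Proposition \ref{geometricinterpretation} applies; but this is immediate from the construction of $d_i$ as the exterior product of the basepoint vector with the pushforward of a decomposition of $c_i$ by the frame. Everything else is direct substitution, and no calculations in coordinates are required.
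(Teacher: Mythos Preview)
Your proposal is correct and matches the paper's approach: the paper gives no separate proof of this corollary, treating it as immediate from the preceding proposition together with Proposition~\ref{geometricinterpretation}. You have simply written out that implicit argument, including the (routine) check that the lifts $d_i$ remain decomposable so that Proposition~\ref{geometricinterpretation} applies.
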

\subsection{Practical usage}\label{practicalsec}
\subsubsection{Determination of a finite frame configuration}
The multi-focal varieties derived from the invariants $I=I_2,I_3,I_4$ are used in practice as follows.

Consider a fixed but unknown configuration $b$ of $n=2,3,\text{or } 4$ frames --``cameras". Let $t$ be any lift of the elements $f(I)(b)$ or $f_s(I)(b)$ specified in Theorem \ref{mainconstruction} to the tensor space $\L^{p_1}T^{\vee}\otimes .. \otimes \L^{p_n}T^{\vee}$ (or of the element $f''(I)(b)$, if desired, in the presence of symmetry reduction). $t$ is well-defined up to scale.

A collection $c$ of 2, 3, or 4 points or lines of $\P T$ known to be $b$-related to a configuration $d$ of lines and planes, in the ambient 3-dimensional projective space, of the types enumerated in Proposition \ref{geometricinterpretation}, determine linear constraints on the unknown tensor element $t$ as described in that Proposition. Typical such configurations $c$ arise from configurations $d$ which are loci of ``visual rays" connecting the basepoints of the 2,3, or 4 frames $b$, the ``observers'', to fixed points or lines in the ambient projective 3-space. In the terms of visual geometry, the configurations $c$ are directly measurable as the visual images of points or lines in space, belonging to 2-dimensional visual planes, obtained from $n=2,3,\text{or }4$ points of view.

Sufficiently many such linear constraints determine $t$ uniquely up to scale. Then some attempt is made to determine the frame configuration $b$ from the value of $t(b)$ so obtained.
\begin{example} Under Euclidean reduction in the case $I=I_2$, a linear constraint on $t(b)$ is specified by the knowledge of two points of $\P T$ satisfying the hypothesis in Corollary \ref{linearsections}(\ref{essentialmatrixconstraintlinear}). It turns out that there is a closed formula for the pre-images $b$ of $t(b)$ in terms of $t(b)$. It can be ascertained from (\cite{faug93} page 284).
\end{example}
This technique is normally extended to a configuration of $n$ frames with $n>4$, by expressing it as the union of sufficiently many pairs, triples, and quadruples for which the ``visual observations" $c$ are available.
\begin{remarkN}In a completely different direction, one might hope to find $n$-factor invariants $I$ for very large $n$, for which a geometric interpretation can be found analogous to the interpretations of the invariants $I_2,I_3,I_4$ explained in Proposition \ref{geometricinterpretation}. Then the exact same procedure, which used $I_2,I_3,I_4$ to find $2,3,4$-frame configurations, could be attempted with $I$ instead. A notable candidate is the ``line-complex" invariant of $(\L^{2}V^{\vee})^{\otimes 6}$, not derived in any way from the usual exterior product or meet, whose vanishing indicates that 6 lines in $\P^{3}$ belong to a linear line complex (see \cite{jessop}). The formula for this invariant is the $6\times 6$ matrix determinant.
\end{remarkN}
\subsubsection{Determination of a moving frame}
In practice finite frame configurations often arise as approximations to a smooth path $\gamma:[0,1]\rightarrow F$, covering a smooth immersed path of basepoints $p:[0,1]\rightarrow \P V$. In this case, fix an $n$-factor invariant of skew-symmetric tensors $I$, and consider the product immersed manifolds
\begin{align*}
& M:=\left(p([0,1])\right) ^{n}\subset \left(\P V\right)^{n}\\
& N:=\left(\gamma([0,1])\right)^{n}\subset F^{n}
\end{align*}
Assume that the projection $N\rightarrow M$ is a diffeomorphism.

Suppose that the restriction of the multi-focal map $f(I)$ from $F^{n}$ to $N$ is given with respect to the natural coordinates $[0,1]^{n}$ on $M\cong N$. By construction, up to pointwise scale this is the expression $\eta_N$ of the differential form $\eta(I)$ on $(\P V)^{n}$ along $M$ with respect to the framing $N$ of the tangents spaces of $(\P V)^{n}$:
\[ \eta_N:[0,1]^{n}\rightarrow \bigotimes_{i=1}^{n}\L^{p_i}T^{\vee} \]

The problem is to find $(M,N,p,\gamma)$ from $\eta_N$.

Note the similarity to the problem of non-abelian integration:
\begin{theorem} \emph{(e.g. \cite{sharpe}) (Non-abelian fundamental theorem of calculus)}

Let $G$ be a simply-connected, finite-dimensional real Lie group with Lie algebra $\mathfrak{g}$.

Let $M$ be a smooth manifold, and $\omega_M$ a differential 1-form on $M$ with values in a Lie algebra $\mathfrak{g}$.

Let $\omega_G$ denote the left (or right)-invariant Maurer-Cartan form of $G$.

Then there is a smooth map $p:M\rightarrow G$ such that $p^{*}\omega_G = \omega_M$, unique up to left (or right) translation in $G$, if and only if $d\omega_M+\tfrac{1}{2}[\omega_M,\omega_M]=0$.
\end{theorem}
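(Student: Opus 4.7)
The plan is to treat necessity and sufficiency separately. Necessity is a one-line naturality argument: assuming $p : M \to G$ satisfies $p^{*}\omega_G = \omega_M$, the Maurer--Cartan form $\omega_G$ on $G$ satisfies the structure equation $d\omega_G + \tfrac{1}{2}[\omega_G,\omega_G] = 0$ (a direct consequence of left-invariance together with the Jacobi identity), and pulling back by $p$, using that $d$ and the Lie bracket on $\mathfrak{g}$-valued forms commute with pullback, yields $d\omega_M + \tfrac{1}{2}[\omega_M,\omega_M] = 0$ at once.

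For sufficiency I would form, on the product $M \times G$, the $\mathfrak{g}$-valued 1-form
\[ \Omega := \pi_G^{*}\omega_G - \pi_M^{*}\omega_M, \]
and set $D := \ker\Omega \subset T(M \times G)$. Because $\omega_G$ is pointwise an isomorphism $T_gG \cong \mathfrak{g}$, the subbundle $D$ is a distribution of rank $\dim M$, and the restriction of $\pi_M$ to any integral submanifold is a local diffeomorphism. The essential local computation is that $d\Omega \equiv 0 \pmod{\Omega}$: expanding $d\Omega$ and applying the Maurer--Cartan identity on $G$ to the $\omega_G$ term and the hypothesis on $\omega_M$ to the $\omega_M$ term leaves only wedge expressions containing $\Omega$ as a factor, so the Frobenius theorem produces an integrating foliation tangent to $D$.

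Fix $m_0 \in M$ and let $L$ be the maximal integral leaf through $(m_0, e)$. The projection $\pi_M|_L : L \to M$ is a local diffeomorphism by rank, and a path-lifting argument using the horizontal lifts supplied by $D$ upgrades it to a covering map; the desired map is then $p := \pi_G \circ (\pi_M|_L)^{-1}$, provided this covering can be trivialized. This is exactly where simple connectivity enters, via a monodromy argument: each loop in $M$ lifts in $L$ to a path whose endpoint discrepancy defines a homomorphism $\pi_1(M) \to G$, and the hypotheses of the theorem (together with the implicit connectedness of $M$) kill it. Uniqueness up to left translation is then immediate, since any two candidates $p_1, p_2$ make $m \mapsto p_1(m) p_2(m)^{-1}$ have vanishing left-logarithmic derivative, hence locally and then globally constant on each connected component. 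The main obstacle is this global step: converting Frobenius's local integration into an honest map defined on all of $M$. The algebraic content - the Maurer--Cartan calculation witnessing integrability - is routine; the genuine subtlety is entirely the monodromy argument.
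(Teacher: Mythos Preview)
The paper does not supply its own proof of this theorem; it is quoted from \cite{sharpe} as background, with only the parenthetical attribution to Cartan. So there is no paper proof to compare against, and I can only assess your argument on its own terms.

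Your overall architecture is the standard one and is correct in outline: necessity by pulling back the structure equation, sufficiency by forming $\Omega = \pi_G^{*}\omega_G - \pi_M^{*}\omega_M$ on $M\times G$, checking Frobenius integrability of $\ker\Omega$, and reading off $p$ from a leaf. The local computation you describe is right.

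There is, however, a real gap at the global step. You assert that the monodromy homomorphism $\pi_1(M)\to G$ is killed by ``the hypotheses of the theorem (together with the implicit connectedness of $M$)''. But the hypothesis, as written in the paper, is that \emph{$G$} is simply connected, not $M$; and simple connectivity of $G$ does nothing to trivialize a homomorphism out of $\pi_1(M)$. Concretely: take $M=S^1$, $G=\mathbb{R}$ (simply connected, abelian), and $\omega_M = d\theta$. The Maurer--Cartan equation is satisfied trivially, yet there is no $p:S^1\to\mathbb{R}$ with $p^{*}(dx)=d\theta$, since $d\theta$ is not exact. Your Frobenius leaf in $S^1\times\mathbb{R}$ is the helix, and the covering $L\to S^1$ is the nontrivial universal cover.

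So either the statement you are proving is missing the hypothesis that $M$ be simply connected (which is how the theorem is actually stated in Sharpe, and is almost certainly what the paper intended), or your monodromy sentence needs to be rewritten to invoke simple connectivity of $M$ rather than of $G$. As written, the step ``the hypotheses \ldots\ kill it'' is not justified and is in fact false for the stated hypotheses.
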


(Note: This theorem is probably originally due to Elie Cartan, or an even earlier author).

The intuitive idea is that the values of $\omega_M$ specify how the tangent spaces of $M$ should be situated in $G$ with respect to the tautological framing of the tangent bundle of $G$.

An analogous integrability or consistency criterion for the multi-focal element field $\eta_N$ on $[0,1]^{n}$, guaranteeing the existence of the product manifolds $M$ and $N$ and the maps $p$ and $\gamma$, is not known.

\begin{remark} Despite appearances, there is one strong dissimilarity with the situation of non-abelian integration. The pullback $p^{*}\omega_G$ involves the first-order derivatives of $p$. It depends on the tangent spaces of $p(M)$. On the other hand, the data $\eta_N$ on $[0,1]^{n}$, the ``pullback" of $\eta$, is of order zero in  $\gamma$ and $p$; it depends only on their values, not their derivatives.

This is because the frame field $\gamma$ plays the role in our setting that is played in the setting of non-abelian integration by the tangent map of the map $p$, also known as the Jacobian $J(p)$ or differential $dp$. On the other hand, the situations \emph{really are} analogous in case the frame field is assumed to be differentially related to the path $p$. For example, in the case of Euclidean reduction, if $\gamma$ is assumed to be the Frenet frame of the path $p$.
\end{remark}
In the absence of such a criterion, current numerical algorithms for determining $p$ and $\gamma$ from $\eta_N$, e.g. the Theia Vision Library \cite{theia} or the Open Multiple View Geometry library \cite{openmvg}, use a feedback-based error-minimization technique called ``bundle adjustment". In practice this strategy is known to be far from optimal. Indeed, it employs the same steps when the configurations of frames are discrete that it does when the configuration of frames is an approximation to a smooth path of frames. So it does not even make use of the assumption of continuity of the paths (although, see \cite{astromheyden}).

Such algorithms would certainly be improved by a step which enforces consistency of $\eta_N$ before attempting to reconstruct the maps $p$ and $\gamma$.
\subsection{Constraints}\label{constraintssec}
It is important to know equations describing the multi-focal varieties, for some fixed invariant $I$, for at least two reasons:
\begin{enumerate}
\itemsep0em
\item{To determine a given unknown multi-focal tensor $t$, in principle fewer linear constraints are needed if the constraints are augmented by the (always non-linear) constraints satisfied by the entire multi-focal variety.}
\item{A procedure for determining $b$ from its multi-focal tensor $t$ is certainly more likely to succeed if the value of $t$ estimated by solving a linear system is known to lie on the multi-focal variety.}
\end{enumerate}
\begin{remarkN} A system of polynomial equations describing a given multi-focal variety would be useful for testing whether or not an estimated value of $t$ is actually of the form $t(b)$ for some $b$. By employing a certain amount of error analysis, one might even hope to use the formulas appearing in the equations to estimate how far a given value of $t$ is from lying on the multi-focal variety.

\emph{However}, it would be even more useful to have a \emph{means} of enforcing the constraints; a polynomial mapping from the ambient tensor space to itself which is a retraction onto the multi-focal variety. In algebraic terms, this would amount to a splitting of the short exact sequence of rings
\[0\rightarrow J\rightarrow R\rightarrow R/J\rightarrow 0\]
where $J$ denotes the radical ideal of functions vanishing on the variety, and $R$ denotes the coordinate ring of the ambient tensor space. I do not know of any such algebraic retraction for any invariant $I$, or a proof that one does not exist.
\end{remarkN}
\begin{remarkN} Amazingly, the Theorems 1 and 2 of \cite{semialgretract} seem to imply that a \emph{semi-algebric} retraction does exist.
\end{remarkN}
\begin{remarkN} Even more useful than a retraction $Z\rightarrow Z$ of the tensor space $Z$ onto the multifocal variety $f_I(F^{n})\subset Z$ would be a direct algebraic or semi-algebraic map $Z\rightarrow F^{n}$ which is a right inverse of $f_I$. This is a rather serious matter in practice. For example, just because one knows that an element of $Z$ belongs to $f''_{I_{3}}(E^{2})$, it does not follow by any means that one knows an element of $E^{2}$ mapping to it.
\end{remarkN}
\subsubsection{Constraints on the general and Euclidean bifocal tensors}
No doubt inspired by \cite{lh}, Demazure proved in \cite{dmz} that the conical complex affine variety $\mathcal{V}$ equal to the Zariski closure of the complexification of the set of real $3\times 3$ matrices of the form $m=r^{t}a$, where $r$ is real orthogonal and $a$ is real anti-symmetric, is irreducible with projectivization of degree $10$ in $\mathbb{CP}^{8}$. He also proved that the ideal $J$ generated by the cubic polynomial $\det m$ and the $3\cdot3=9$ cubic polynomials given by the entries of the matrix
\begin{align*}
c(m):=&\tfrac{1}{2}\operatorname{tr}(mm^{t})m  - mm^{t}m\\
=&\left(\tfrac{1}{2}\operatorname{tr}(mm^{t})\operatorname{Id}-mm^{t}\right)m  \\
=&m\left(\tfrac{1}{2}\operatorname{tr}(mm^{t})\operatorname{Id}-m^{t}m\right)
\end{align*}
has zero locus equal to this irreducible variety, though he does not seem to prove that this ideal is radical.

We mention the following fact, relevant to the proofs, as a way to introduce a certain important quartic polynomial $q(m)\in J$. The ideal generated by the polynomials $c$ does not contain $\det m$, but $\det m$ does belong to the radical of the ideal generated by $c$ and the polynomial
\begin{align*}
q(m):= \tfrac{1}{2}(\operatorname{tr}mm^{t})^2 -\operatorname{tr}\left[(mm^{t})^2\right]
\end{align*}
This is because of the equation
\begin{align}\label{frob}
\operatorname{tr}(cc^t)=-\tfrac{1}{2}\operatorname{tr}(mm^t)q+3\operatorname{det}m^3
\end{align}
Without indicating a textual reference, Demazure credits O. Faugeras with the crucial fact that a real matrix $m$ has a real factorization of the form $r^{t}m$ if and only if $\det(m)=0$ and $q(m)=0$. He then shows that the mapping $(r,a)\mapsto r^{t}a$ defines a degree 2 dominant map of real varieties $O(3)\times \mathbb{R}^3\rightarrow \mathcal{V}_{\mathbb{R}}$.

Since $\mathcal{V}$ has (complex) dimension 7 in $\mathbb{C}^9$ and the (real) dimension of $O(3)\times \mathbb{R}^3$ is 6, this is somewhat surprising. Evidently as a real submanifold $\mathcal{V}$ is not tranverse to the real plane $\mathbb{R}^9\subset \mathbb{C}^9$. The fact that the two polynomials $\det(m)$ and $q(m)$ cut out a real variety of codimension larger than 2 can be explained by the observation that the left-hand side of equation (\ref{frob}) is the Frobenius norm of $c$, the sum of the squares of the entries of $c$. Over the real numbers, the vanishing of this norm implies the vanishing of all of the entries.

This completes our discussion of constraints on the Euclidean 2-focal variety.

The general 2-focal variety, without Euclidean reduction, turns out to be just the determinant locus $\det m = 0$ (see e.g. \cite{fl}).
\subsubsection{Constraints on the general trifocal tensors} \label{constraintsEucTri}
Aholt and Oeding showed in \cite{ao} that the radical prime ideal of the Zariski closure of the complexification of the trifocal variety is generated by 10 polynomials of degree 3, 81 polynomials of degree 5, and 1980 polynomials of degree 6, by explicitly listing generating $\SL(T_{\mathbb{C}})^{\times 3}$-modules in the polynomial algebra. Consult \cite{alzatitortora} for a review of the literature concerning polynomial equations characterizing the general trifocal variety as a set. Here we review only selected aspects of this topic, and mention that a complete set of polynomial equations describing the \emph{Euclidean} trifocal variety as a set (the case of practical interest!) does not seem to be known.
\vspace{1pc}

Let us use the following notation. We consider a general element $t$ of the trifocal variety, the image of the map $f_w(I):F_w^{3}\rightarrow \L^{2\vee}T\otimes T^{\vee} \otimes \L^{2\vee}T$. Fix an arbitrary basis for $T$, with the corresponding bases for $T^{\vee}$ and $\L^{2}T^{\vee}$. By contracting $t$ with the three basis elements into the middle factor $T^{\vee}$, we obtain three tensor elements $t_1,t_2,t_3$. We regard them as $3\times 3$ matrices with respect to the basis for $\L^{2}T^{\vee}$.

For $x=(x_1,x_2,x_3)\in k^3$, we set $t(x) = x_1t_1+x_2t_2+x_3t_3$.
\begin{theorem} \emph{(Papadopoulo and Faugeras \cite{pf})} The elements $t$ of the trifocal variety (in the real case $k=\mathbb{R}$) satisfy the following two conditions:
\begin{enumerate}
\itemsep0em
\item{\label{determinatnal}$\operatorname{det}(t(x))=0$ for all $x=(x_1,x_2,x_3)\in \mathbb{R}^3$}
\item{\label{epi}The system of right kernels of the matrices $t(x)$ is 2-dimensional or less. The system of left kernels is also 2-dimensional or less.}
\end{enumerate}
\end{theorem}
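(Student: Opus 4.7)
The plan is to deduce both statements from the geometric interpretation of the vanishing of $t$ on decomposable tensors, supplied by Corollary \ref{linearsections}(2). Recall that if $b=(b_1,b_2,b_3)$ is the 3-tuple of frames with $t=t(b)$, then for decomposables $c_1\in \L^{2}T$, $c_2\in T$, $c_3\in \L^{2}T$ representing a line, a point, and a line in $\mathbb{P}T$, one has $t(c_1\otimes c_2\otimes c_3)=0$ if and only if the corresponding planes $d_1,d_3\subset \mathbb{P}V$ (through the basepoints of $b_1,b_3$) and line $d_2\subset \mathbb{P}V$ (through the basepoint of $b_2$) have a common point. So the entire proof will consist of finding linear families of $c_1$'s and $c_3$'s that force the incidence condition in $\mathbb{P}^3$.

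For statement (\ref{determinatnal}), fix $x\in T$ and let $d_2=d_2(x)\subset \mathbb{P}V$ be the line through $b_2$'s basepoint corresponding to $x$ via the frame $b_2$. Assuming the basepoints are in general position, let $d_1(x)\subset \mathbb{P}V$ be the (unique) plane through $b_1$'s basepoint containing $d_2(x)$; this assignment is a nonzero linear map $T\to \L^{2}T$, $x\mapsto c_1(x)$, on the frame side. For any line $d_3\subset \mathbb{P}V$ through $b_3$'s basepoint, the plane $d_3$ meets the line $d_2(x)\subset d_1(x)$ in at least one point of $\mathbb{P}V$, hence $d_1(x),d_2(x),d_3$ are concurrent. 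By the geometric criterion, this gives $t(c_1(x)\otimes x\otimes c_3)=0$ for every decomposable $c_3\in \L^{2}T$; by linearity and since decomposables span $\L^{2}T$, the same vanishing holds for all $c_3$. Hence $c_1(x)$ lies in the right kernel of $t(x)$, and $\det t(x)=0$.

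For statement (\ref{epi}), observe that as $x$ varies over $T$, every plane $d_1(x)$ produced above contains the common line through the basepoints of $b_1$ and $b_2$. The pencil of planes through a fixed line of $\mathbb{P}V$ is a projective line, so the corresponding $c_1(x)$'s sweep out a linear subspace of $\L^{2}T$ of dimension at most 2. Thus the union of the right kernels of $t(x)$, which contains this linear subspace and a priori has the same dimension for generic $b$ (dimension-counting: each right kernel is at least 1-dimensional by step 1, so the union has dimension at least that of the image of $x\mapsto c_1(x)$), is at most 2-dimensional. The identical argument with the roles of $b_1$ and $b_3$ interchanged—producing $c_3(x)$ as the plane through $b_3$'s basepoint containing $d_2(x)$, which always contains the line joining the basepoints of $b_2$ and $b_3$—bounds the system of left kernels.

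The main potential obstacle is that the right/left kernel may be larger than the 1-dimensional subspace $\langle c_1(x)\rangle$ constructed, making ``the system of right kernels'' possibly larger than the image of $x\mapsto c_1(x)$. To handle this one argues separately that at generic $b$ the right kernel is exactly 1-dimensional (since at generic $b$ the matrices $t(x)$ have rank 2), so the system of right kernels coincides with $\{c_1(x):x\in T\}$, which is contained in the 2-dimensional pencil described above; the statement for all $t$ in the variety then follows by taking the closure. The condition on genericity (basepoints not collinear, etc.) enters only to guarantee that the assignments $x\mapsto c_1(x)$ and $x\mapsto c_3(x)$ are well-defined; the resulting polynomial identities $\det t(x)=0$ and the kernel-dimension bounds then extend to the whole Zariski closure by continuity.
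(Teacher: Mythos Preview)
The paper does not prove this theorem at all; it is stated as a result of Papadopoulo--Faugeras and cited to \cite{pf}, then used as background for Proposition~\ref{adjugate}. So there is no paper proof to compare against, and your proposal stands or falls on its own.

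Your argument for (\ref{determinatnal}) via Corollary~\ref{linearsections}(2) is correct and clean: the plane $d_1(x)$ through the basepoint of $b_1$ containing the line $d_2(x)$ forces concurrency with every plane $d_3$, so $c_1(x)$ lies in the kernel of $t(x)$ and the determinant vanishes. The linearity of $x\mapsto c_1(x)$ is genuine (it comes from $v_1\wedge v_2\wedge(\text{linear in }x)$), and vanishing for generic $x$ suffices since $\det t(x)$ is polynomial in $x$.

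For (\ref{epi}) there is a real gap. You assert that ``at generic $b$ the matrices $t(x)$ have rank~$2$'', meaning for \emph{all} nonzero $x$. This is false: even for generic frame triples, the rank of $t(x)$ drops to~$1$ at exactly two projective directions, namely the $x$ for which $d_2(x)$ is the line $\overline{p_2p_1}$ or the line $\overline{p_2p_3}$. At $x=x_0$ with $d_2(x_0)=\overline{p_2p_3}$, the same incidence analysis you used shows that the full left kernel of $t(x_0)$ consists of \emph{all} $c_1$ whose plane $d_1$ passes through $p_3$, i.e.\ the pencil of planes through $\overline{p_1p_3}$; this is a $2$-dimensional subspace of $\L^{2}T$ that is \emph{not} contained in your pencil through $\overline{p_1p_2}$. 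So the literal union (or span) of the kernels over all $x$ is $3$-dimensional, and your closure argument cannot repair this.

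What rescues the statement is that ``the system of right kernels'' in \cite{pf} and in the paper's subsequent Proposition~\ref{adjugate} refers to the span of the kernels of the $t_i$ for a \emph{generic} basis---equivalently, the closure of $\{\ker t(x):x\text{ generic}\}$---and the paper itself imposes the rank-$2$ hypothesis explicitly before translating the condition into equations. Under that reading your construction does the job: for generic $x$ the kernel is exactly $\langle c_1(x)\rangle$, and these sweep out the $2$-dimensional pencil through $\overline{p_1p_2}$. You should state this interpretation explicitly and drop the claim that rank~$2$ holds for all $x$.
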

Note that the condition (\ref{determinatnal}) above is a system of 10 equations of degree 3 on the components of $t$. To obtain explicit formulas for them, expand the determinant.

The authors of \cite{pf} call the conditions (\ref{epi}) the \emph{epipolar constraints}. They do not supply equations for the coefficients of $t$ which are equivalent to these conditions. Under the further assumption that all $t(x)$ have rank 2 (and not rank 1 or 0), equivalent equations are given below.

\begin{proposition} \label{adjugate}Let $t_1,t_2,t_3$ be a basis for a 3-dimensional linear system of 3 by 3 matrices over a field $k$. Assume that all $t(x)=x_1t_1+x_2t_2+x_3t_3$, for non-zero $x\in k^3$, have rank exactly 2.

Then the system has common right kernel of dimension 2 or less if and only if the following system of 27 equations of degree 6 holds:
\begin{align*}
\sum_{\sigma\in S_3} \text{sign}(\sigma)a^{1}_{i\sigma(1)}a^{2}_{j\sigma(2)}a^{3}_{k\sigma(3)} = 0
\end{align*}
\noindent where $a^{1}$, $a^{2}$, and $a^{3}$ denote the classical adjoints \footnote{also called the adjugate, meaning the transpose of the matrix of cofactors} of the matrices $t_1$, $t_2$, $t_3$.
\end{proposition}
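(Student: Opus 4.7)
The plan is to reduce each side of the equivalence to a transparent linear-algebraic condition on the left kernels of the individual $t_k$, exploiting the rank-$1$ structure of their adjugates. First, I would rewrite each of the 27 equations as a single $3\times 3$ determinant: for a fixed triple $(i,j,k)$, the alternating sum
\[\sum_{\sigma\in S_3}\mathrm{sign}(\sigma)\,a^1_{i\sigma(1)}\,a^2_{j\sigma(2)}\,a^3_{k\sigma(3)}\]
is precisely the determinant of the $3\times 3$ matrix whose rows are row $i$ of $a^1$, row $j$ of $a^2$, and row $k$ of $a^3$. So the 27 equations assert that every selection of one row from each adjugate yields three linearly dependent row vectors in $(k^3)^*$.

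Next I would invoke the rank hypothesis. Specializing to $x=e_k$ shows that each $t_k$ has rank exactly $2$, so each adjugate $a^k$ has rank exactly $1$, and I can write $a^k=w_k v_k^T$ with $w_k\in k^3$ generating the right kernel $\ker t_k$ and $v_k\in (k^3)^*$ generating the left kernel $L_k$ of $t_k$. With this factorization, row $i$ of $a^k$ is just $(w_k)_i v_k^T$, so the above determinant factors as $(w_1)_i(w_2)_j(w_3)_k\cdot\det(v_1,v_2,v_3)$. Since no $w_k$ is zero, the entire system of $27$ equations vanishes if and only if $\det(v_1,v_2,v_3)=0$; equivalently, the three left kernels $L_1,L_2,L_3$ lie in a common $2$-plane of $(k^3)^*$.

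The third step is to identify this intrinsic condition with the stated ``common right kernel'' condition on the family $\{t(x)\}$. Linear dependence of the $v_k$ is equivalent to the existence of a nonzero $p\in k^3$ annihilated by each $v_k$, i.e.\ $p\in\bigcap_k \mathrm{Image}(t_k)$. Under the hypothesis of rank exactly $2$, the adjugate $\mathrm{adj}(t(x))$ is a nowhere-vanishing rank-$1$ matrix for every $x\neq 0$, so the rank-$1$ factorization $\mathrm{adj}(t(x))=u(x)w(x)^T$ exists globally with both factors polynomial in $x$ of total bidegree summing to $2$, and neither factor having base points. The existence of such a common $p$ translates into a degeneracy of one of these factors, and via the coupling enforced by the factorization this is precisely the condition that the image of $x\mapsto [\ker t(x)]$ collapses into a projective line of $\P(k^3)$, i.e.\ that the cone $\bigcup_{x\neq 0}\ker t(x)\subset k^3$ spans at most a $2$-dimensional subspace.

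The main obstacle is exactly this final geometric equivalence. The direction ``the system of right kernels has dimension $\leq 2$ implies the $27$ equations'' reduces, upon restricting $x$ to the basis vectors $e_1,e_2,e_3$, to an immediate linear-algebra check on the $\ker t_k$'s and their annihilators. The converse is the delicate part: one has to exploit the rank-exactly-two hypothesis to correctly relate the left-kernel degeneracy of the three base matrices $t_k$ to the right-kernel degeneracy of the entire one-parameter family $t(x)$, and this is where the tight coupling between the two factors of the bidegree-constrained factorization of $\mathrm{adj}(t(x))$ plays the essential role.
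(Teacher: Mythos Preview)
Your first three steps constitute essentially the same argument as the paper's: factor each rank-$1$ adjugate as $a^k=w_kv_k^T$, recognise each of the 27 expressions as a $3\times 3$ determinant, and use that no $w_k$ vanishes to reduce everything to the single condition $\det(v_1,v_2,v_3)=0$. Your version is in fact a bit cleaner than the paper's, which invokes the special identity $\operatorname{adj}(x\otimes y+z\otimes w)=(y\wedge w)\otimes(x\wedge z)$ where the bare fact ``rank $2$ matrix $\Rightarrow$ rank $1$ adjugate'' suffices.

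Where you diverge is in the interpretation of the conclusion, and this is where your ``delicate part'' arises. Two remarks:

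\emph{First}, notice that your computation shows the displayed 27 equations (with $\sigma$ acting on the second index) encode dependence of the \emph{left} kernels $v_1,v_2,v_3$, whereas the statement speaks of the \emph{right} kernel. If you look at the paper's proof, its final displayed system is $\sum_\sigma\operatorname{sign}(\sigma)a^1_{\sigma(1)i}a^2_{\sigma(2)j}a^3_{\sigma(3)k}=0$, with $\sigma$ acting on the \emph{first} index; that system is the one equivalent to dependence of $w_1,w_2,w_3$, i.e.\ the right-kernel condition. The Remark immediately following the proof confirms this: it assigns the second-index version (the one printed in the Proposition) to the \emph{left}-kernel case. So the discrepancy you detected is an index transposition in the statement, not a mathematical gap you need to bridge.

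\emph{Second}, the paper does not interpret ``the system has common right kernel of dimension $2$ or less'' as a statement about the whole pencil $\{t(x):x\neq 0\}$. In the proof it writes explicitly ``the common right kernel of the $t_1,t_2,t_3$,'' meaning simply the span of $\ker t_1,\ker t_2,\ker t_3$. With that reading, once you have $\det(w_1,w_2,w_3)=0$ (from the first-index equations) you are done; there is no further step. Your step~4, which tries to pass from dependence of the left kernels of the three basis matrices to a right-kernel degeneracy of the \emph{entire} family via a ``bidegree coupling'' of $\operatorname{adj}(t(x))$, is attempting to prove strictly more than the paper claims, and as written it is only a sketch: the asserted global polynomial factorisation $\operatorname{adj}(t(x))=u(x)w(x)^T$ with both factors base-point-free does not follow merely from pointwise rank~$1$, and the subsequent ``translates into a degeneracy of one of these factors'' is not justified.

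In short: keep your steps 1--3, swap the index convention to match the paper's proof (or, equivalently, swap ``right'' for ``left'' to match the Remark), read ``common right kernel'' as the span of the three individual kernels, and drop step~4 entirely.
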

\begin{proof} Observe that in dimension 3, identifying $k^3\cong \L^{2}k^{3}$, the following formula holds:
\begin{align*}
\text{the classical adjoint}(x\otimes y + z\otimes w) = (y\wedge w)\otimes (x\wedge z)
\end{align*}
I do not know a proof of this formula besides direct calculation of all matrix entries, so it is omitted.

Evidently the left-hand factor $y\wedge w$, regarded as a linear functional on $k^3$ with values in $\L^{3}k^{3}$, describes the right kernel of $x\otimes y + z\otimes w$. The rank condition implies that there are $x_i,y_i,z_i,w_i\in k^3$ such that
\begin{align*}
t_1 &= x_1 \otimes y_1 + z_1 \otimes w_1\\
t_2 &= x_2 \otimes y_2 + z_2 \otimes w_2\\
t_3 &= x_3 \otimes y_3 + z_3 \otimes w_3
\end{align*}
Then
\begin{align*}
a^1 &= (y_1\wedge w_1) \otimes (x_1\wedge z_1)\\
a^2 &= (y_2\wedge w_2) \otimes (x_2\wedge z_2)\\
a^3 &= (y_3\wedge w_3) \otimes (x_3\wedge z_3)
\end{align*}
The common right kernel is described by the span of the $y_1\wedge w_1,y_2\wedge w_2,y_3\wedge w_3$. 

The (outer) exterior product of the left-hand factors of $a^1\otimes a^2\otimes a^3$ is
\[ \det(y_1\wedge w_1,y_2\wedge w_2,y_3\wedge w_3)\cdot (x_1\wedge z_1)\otimes(x_2\wedge z_2)\otimes(x_3\wedge z_3) \]
Since all $t_i$ have rank 2, no $x_i\wedge z_i$ is zero, so that the right-hand triple tensor product does not vanish identically. Therefore this expression vanishes identically if and only if the left-hand determinant does. This happens if and only if the common right kernel of the $t_1,t_2,t_3$ has dimension 2 or less.

On the other hand this expression is zero if and only if for all $i,j,k$:
\begin{align*}
\sum_{\sigma\in S_3} \text{sign}(\sigma)a^{1}_{\sigma(1)i}a^{2}_{\sigma(2)j}a^{3}_{\sigma(3)k} = 0
\end{align*}
This is because of the polarization formula for the determinant (see the preliminary chapter of \cite{dolg}).
\end{proof}
\begin{remark} The same proof proves the analogous fact that the common left kernel of the $t_1,t_2,t_3$ is sub-maximal rank if and only if
\begin{align*}
\sum_{\sigma\in S_3} \text{sign}(\sigma)a^{1}_{i\sigma(1)}a^{2}_{j\sigma(2)}a^{3}_{k\sigma(3)} = 0
\end{align*}
\end{remark}
\subsubsection{Constraints on the Euclidean trifocal tensor}\label{newconstraints}
To simplify the notation in what follows we make the identifications of $H'\cong SO(3)$-modules
\[\mathbb{R}^{3}\cong \mathbb{R}^{3*}\cong T\cong T^{\vee}\cong \L^{2}T^{\vee}\]
We use the description of the Euclidean trifocal tensors obtained in section (\ref{explicitEucTri}),
\[t = -r\otimes w + u \otimes s \in (\mathbb{R}^{3})^{\otimes 3}\]
where $u,w\in \mathbb{R}^3$ and $r,s\in SO(3)\subset \operatorname{Mat}_{3\times 3}(\mathbb{R})\cong\mathbb{R}^{3}\otimes \mathbb{R}^{3}$.

Define matrices $t_1,t_2,t_3$ as in the beginning of section \ref{constraintsEucTri}. Let $a_1,a_2,a_3$ denote their classical adjoints, or adjugates, and let $r_1,r_2,r_3$ and $s_1,s_2,s_3$ denote column vectors of matrices $r,s$. That is,
\begin{align*}
t_1 &= -r_1\otimes w + u\otimes s_1\\
t_2 &= -r_2\otimes w + u\otimes s_2\\
t_3 &= -r_3\otimes w + u\otimes s_3
\end{align*}
Since the matrix $r$ is special orthogonal, the usual cross-product (or wedge product) of $r_i$ and $r_j$, $i\neq j$, is equal to $\pm r_k$, where $k$ is the index from the set $1,2,3$ which is not $i$ or $j$, signed so that the list $(r_i,r_j,\pm r_k)$ comprises a basis with the same orientation as $(r_1,r_2,r_3)$. We adopt the notation $\pm r_k = r_{\{i,j\}}$ in this case (and similarly, $s_{\{i,j\}}$).
\begin{proposition} \emph{(Some algebraic properties of the Euclidean trifocal variety)}
For distinct $i,j,k$:
\begin{enumerate}
\itemsep0em
\item{\label{f1}$a_i =(s_i\wedge w)\otimes(r_i\wedge u)$}
\item{\label{f2}$t_i a_i     =(\operatorname{det}t_i)\operatorname{Id}=0$}
\item{\label{f3}$t_i a_j     =u\otimes(r_j\wedge u)\langle s_{\{i, j\}}, w\rangle$}
\item{\label{f4}$a_j t_i     =-(s_j\wedge w)\otimes w \langle r_{\{i, j\}}, u\rangle$}
\item{\label{f5}$t_i a_j t_i =(u\otimes w)\langle s_{\{i,j\}},w\rangle\langle r_{\{j,i\}}, u\rangle$}
\item{\label{f7}$t_i a_j t_k = (u\otimes w)\langle s_{\{i,j\}},w\rangle\langle r_{\{j,k\}},u\rangle$}
\item{\label{f8}$a_i t_j a_k = 0$}
\end{enumerate}
\end{proposition}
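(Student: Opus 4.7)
The plan is to prove all seven identities by direct multiplication of rank-one tensors, using three basic facts: the adjugate formula for rank-two matrices established in the proof of Proposition \ref{adjugate}, the composition rule $(x\otimes y)(z\otimes w)=\langle y,z\rangle\, x\otimes w$, and the triple-product identifications $\langle a\wedge b,c\rangle=\det(a,b,c)$, which in particular make $\langle v,v\wedge b\rangle=0$ for any $v,b$. The orthogonality of $r$ and $s$ enters only through the cross-product identity $r_i\wedge r_j=r_{\{i,j\}}$ (and similarly for $s$), which holds exactly because $r,s\in SO(3)$.

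Part (1) is immediate: writing $t_i=(-r_i)\otimes w+u\otimes s_i$ and plugging into the formula $\mathrm{adj}(x\otimes y+z\otimes w)=(y\wedge w)\otimes(x\wedge z)$ from the proof of Proposition \ref{adjugate} yields $a_i=(w\wedge s_i)\otimes(-r_i\wedge u)=(s_i\wedge w)\otimes(r_i\wedge u)$. Part (2) is just $t_i\,\mathrm{adj}(t_i)=(\det t_i)\mathrm{Id}$, together with $\det t_i=0$ because $t_i$ has rank at most two. For parts (3) and (4) one expands, e.g.,
\[
t_ia_j=\bigl(-r_i\otimes w+u\otimes s_i\bigr)\bigl((s_j\wedge w)\otimes(r_j\wedge u)\bigr),
\]
and uses the composition rule. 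Of the two resulting summands, the one with coefficient $\langle w,s_j\wedge w\rangle$ vanishes, and the other has coefficient $\langle s_i,s_j\wedge w\rangle=\langle s_i\wedge s_j,w\rangle=\langle s_{\{i,j\}},w\rangle$. Part (4) is symmetric, using $\langle u,r_j\wedge u\rangle=0$ and $\langle r_j\wedge u,r_i\rangle=-\langle r_{\{i,j\}},u\rangle$.

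Parts (5) and (6) are obtained by right-multiplying the formula from (3) by $t_i$ (resp.\ $t_k$): the surviving summand is $u\otimes w$ scaled by the triple product $\langle r_j\wedge u,r_i\rangle=-\langle r_{\{i,j\}},u\rangle=\langle r_{\{j,i\}},u\rangle$ (resp.\ $\langle r_j\wedge u,r_k\rangle$), while the term containing $\langle r_j\wedge u,u\rangle$ vanishes. Finally, (7) follows by first applying (4)'s calculation to get $a_it_j\propto(s_i\wedge w)\otimes w$, and then right-multiplying by $a_k=(s_k\wedge w)\otimes(r_k\wedge u)$; the coupling coefficient is $\langle w,s_k\wedge w\rangle=0$, killing everything. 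The only real obstacle is keeping the sign conventions straight: the bracket $r_{\{i,j\}}$ is antisymmetric in its two arguments, and each swap of adjacent arguments in $\det(\cdot,\cdot,\cdot)$ flips a sign, so one must verify case by case that $\langle r_j\wedge u,r_i\rangle$, $\langle r_i\wedge u,r_j\rangle$, and $\langle r_j\wedge u,r_k\rangle$ each match the indicated $r_{\{\cdot,\cdot\}}$ in the statement.
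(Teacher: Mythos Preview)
Your proposal is correct and follows essentially the same route as the paper's proof: both invoke the adjugate formula from Proposition~\ref{adjugate} for part~(\ref{f1}), the defining property of the adjugate for part~(\ref{f2}), and then obtain the remaining identities by expanding the rank-one tensor products, killing one summand via $\langle v,v\wedge b\rangle=0$, and simplifying the surviving coefficient with the triple-product identity. Your write-up is in fact slightly more explicit about the sign bookkeeping than the paper's, which merely says ``note the sign.''
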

\begin{proof} (\ref{f1}) This follows from the first formula appearing in the proof of Proposition \ref{adjugate} (note the sign).

(\ref{f2}) The first equation here is one definition of the determinant. The vanishing of $\det(t_i)$ is implied by the fact that the $t_i$ are rank 2.

(\ref{f3}) Use the formula (\ref{f1}) for $a_j$ and the definition of $t_i$. The matrix product has two terms, one of which vanishes by $\langle w,s_j\wedge w\rangle=0$. For the coefficient of the remaining term use the triple product formula
\[ \langle s_i,s_j\wedge w\rangle = \langle w,s_i\wedge s_j\rangle =\langle w,s_{\{i,j\}}\rangle\]
(\ref{f4}) Use the same proof as the above, with the matrix product in the other order.

(\ref{f5}) Use one of the formulas, (\ref{f3}) for $t_ia_j$, or (\ref{f4}) for $a_jt_i$, multiplied on the right or left by the defining formula for $t_i$.

(\ref{f7}) Multiply the formula for $t_ia_j$ on the right by the formula for $t_k$.

(\ref{f8}) Multiply the formula for $a_it_j$ on the right by the formula for $a_k$. All terms vanish.
\end{proof}
\begin{corollary} (Braid-type relation)
For distinct $i,j$,
\[ t_i a_j t_i = t_j a_i t_j\]
\end{corollary}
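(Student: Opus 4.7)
The plan is to derive the identity directly from formula (\ref{f5}) of the preceding proposition, exploiting the antisymmetry of the bracket notation $r_{\{i,j\}}, s_{\{i,j\}}$ under transposition of indices.

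First I would write down both sides of the claimed equation using (\ref{f5}). On the left we have
\[ t_i a_j t_i = (u\otimes w)\langle s_{\{i,j\}},w\rangle\langle r_{\{j,i\}}, u\rangle,\]
and swapping the roles of $i$ and $j$ in the same formula gives
\[ t_j a_i t_j = (u\otimes w)\langle s_{\{j,i\}},w\rangle\langle r_{\{i,j\}}, u\rangle.\]

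Second, I would invoke the defining sign convention for the bracket. Since $r_{\{i,j\}}$ is defined so that $(r_i, r_j, r_{\{i,j\}})$ is positively oriented (and identically for $s$), interchanging $i$ and $j$ reverses orientation, so $r_{\{j,i\}} = -r_{\{i,j\}}$ and $s_{\{j,i\}} = -s_{\{i,j\}}$. Substituting into the left-hand expression,
\[ \langle s_{\{i,j\}},w\rangle\langle r_{\{j,i\}}, u\rangle = \langle s_{\{i,j\}},w\rangle\cdot\bigl(-\langle r_{\{i,j\}}, u\rangle\bigr),\]
while the right-hand expression equals
\[ \langle s_{\{j,i\}},w\rangle\langle r_{\{i,j\}}, u\rangle = \bigl(-\langle s_{\{i,j\}},w\rangle\bigr)\cdot \langle r_{\{i,j\}}, u\rangle.\]
The two sign flips cancel and the two scalars coincide, proving equality.

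There is essentially no obstacle here: the corollary is a one-line consequence of item (\ref{f5}) together with the antisymmetry of the cross product, and the only thing to be careful about is bookkeeping of the sign convention on $r_{\{i,j\}}$ and $s_{\{i,j\}}$. In particular, no further use of the orthogonality of $r,s$ or of the specific form $t = -r\otimes w + u\otimes s$ is needed beyond what already went into proving (\ref{f5}).
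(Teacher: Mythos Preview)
Your proof is correct and follows essentially the same approach as the paper: both invoke formula (\ref{f5}) for each side and then use the antisymmetry $r_{\{j,i\}}=-r_{\{i,j\}}$, $s_{\{j,i\}}=-s_{\{i,j\}}$ so that the two sign changes cancel.
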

\begin{proof} According to the previous proposition, these two terms are both equal to
\[(u\otimes w)\langle r_{\{i,j\}}, v\rangle\langle s_{\{j,i\}},w\rangle =(-1)(-1)(u\otimes w)\langle r_{\{j,i\}}, v\rangle\langle s_{\{i,j\}},w\rangle\]
\end{proof}
Now adopt the bar notation $\bar{ }$ to denote inversion in the Euclidean group, so that
\begin{align*}
& (\bar{r},\bar{u}) = (r^{t},-r^{t}u)\\
& (\bar{s},\bar{w}) = (s^{t},-s^{t}w)
\end{align*}
We omit the proof of the following formulas, and refer the reader instead to a computer algebra system like Macaulay2.
\begin{proposition} \hspace{1mm}
\begin{enumerate}
\itemsep0em
\item{$\text{classical adjoint}(a_1+a_2+a_3)= (u\otimes w)\langle \bar{u},\bar{w}\rangle$}
\item{The tensor belonging to $(\mathbb{R}^{3})^{\otimes 4}$ given by
\begin{align*}
\begin{pmatrix}
 -t_3a_2t_3 & t_2a_3t_1 & t_3a_2t_1 \\
 t_1a_3t_2 & -t_3a_1t_3 & t_3a_1t_2 \\
 t_1a_2t_3 & t_2a_1t_3 & -t_2a_1t_2 
\end{pmatrix}
\end{align*}
is equal to $u\otimes w \otimes \bar{w}\otimes \bar{u}$.

Here the first two factors, $u$ and $w$, correspond respectively to the row and column of the entries of this matrix, while the second two factors $\bar{w}$ and $\bar{u}$ correspond respectively to the row and column of the matrix itself.}
\end{enumerate}
\end{proposition}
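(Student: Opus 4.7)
The plan for part (1) is to rewrite $a_1+a_2+a_3$ as a single matrix product and then apply multiplicativity of the classical adjoint. By formula (\ref{f1}) of the preceding proposition, each $a_i$ equals the rank-one matrix $(s_i\wedge w)(r_i\wedge u)^{T}$. Writing $[v]_\times$ for the skew-symmetric matrix of cross-product with $v\in\mathbb{R}^3$, one has $s_i\wedge w=-[w]_\times s_i$ and $r_i\wedge u=-[u]_\times r_i$; assembling the $s_i$ as columns of $s$ and the $r_i$ as columns of $r$ then gives
\[
a_1+a_2+a_3\;=\;(-[w]_\times s)(-[u]_\times r)^{T}\;=\;-\,[w]_\times\,s\,r^{T}\,[u]_\times.
\]
Now apply the contravariant multiplicativity $\operatorname{adj}(ABCD)=\operatorname{adj}(D)\operatorname{adj}(C)\operatorname{adj}(B)\operatorname{adj}(A)$ together with the standard identities $\operatorname{adj}([v]_\times)=vv^{T}$, $\operatorname{adj}(g)=g^{T}$ for $g\in SO(3)$, and $\operatorname{adj}(-A)=\operatorname{adj}(A)$ in dimension~$3$:
\[
\operatorname{adj}(a_1+a_2+a_3)\;=\;uu^{T}\cdot r\cdot s^{T}\cdot ww^{T}\;=\;(u\otimes w)\,\bigl(u^{T}r\,s^{T}w\bigr).
\]
Finally substitute $\bar u=-r^{T}u$ and $\bar w=-s^{T}w$ to recognize $u^{T}r\,s^{T}w=\langle\bar u,\bar w\rangle$.

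For part (2), I would evaluate each of the nine block entries using the already-proven formulas (\ref{f5}) and (\ref{f7}). They express every triple $\pm t_i a_j t_k$ as $\pm(u\otimes w)\langle s_{\{i,j\}},w\rangle\langle r_{\{j,k\}},u\rangle$, so the entire block matrix factors as $(u\otimes w)\otimes C$ for a $3\times 3$ scalar matrix $C$. It then suffices to verify that $C_{ij}=\bar w_i\bar u_j=\langle s_i,w\rangle\langle r_j,u\rangle$. Using the orientation convention $r_{\{i,j\}}=\varepsilon_{ijk}r_k$ (and likewise for $s$), each of the nine evaluations produces a quantity of the form $\varepsilon_{ij\alpha}\varepsilon_{jk\beta}\langle s_\alpha,w\rangle\langle r_\beta,u\rangle$ with $\alpha,\beta$ the complementary indices; the minus signs on the diagonal entries and the specific choices of the off-diagonal index patterns $t_i a_j t_k$ in the matrix are rigged so that in every case $\alpha=i$, $\beta=j$, and the combined sign is $+1$. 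One dispatches all nine cases in a short table.

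The main obstacle is the sign bookkeeping in part (2). Although any single entry is a one-line application of (\ref{f5}) or (\ref{f7}), three independent sign sources must cancel coherently across the whole table: the permutation parity in $s_{\{i,j\}}$, the parity in $r_{\{j,k\}}$, and the overall prefactor on each block entry. This is presumably why the authors defer to Macaulay2; the verification is purely mechanical, but the combinatorics of signs is unilluminating to carry out by hand. The real content of the proposition is that once the preceding proposition exhibits each such triple as a scalar multiple of the single rank-one tensor $u\otimes w$, the outer $3\times 3$ matrix of scalars is forced to factor as $\bar w\otimes\bar u$, modulo this combinatorial sign check.
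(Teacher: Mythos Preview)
Your proposal is correct, and in fact supplies considerably more than the paper does: the paper gives no argument at all, only the sentence ``We omit the proof of the following formulas, and refer the reader instead to a computer algebra system like Macaulay2.'' Your approach to part (1) is genuinely illuminating. The factorization $a_1+a_2+a_3=-[w]_\times\, s\, r^{T}\,[u]_\times$ is a nice observation, and once it is in hand the multiplicativity of the adjugate together with the identities $\operatorname{adj}([v]_\times)=vv^{T}$ and $\operatorname{adj}(g)=g^{T}$ for $g\in SO(3)$ finish the job with no computation to speak of. This is strictly preferable to a computer check, since it explains \emph{why} the answer has the form $(u\otimes w)\langle\bar u,\bar w\rangle$. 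For part (2), your reduction to formulas (\ref{f5}) and (\ref{f7}) of the preceding proposition is the right idea and does work: each of the nine entries becomes $(u\otimes w)$ times a scalar of the form $\pm\langle s_{\{\cdot,\cdot\}},w\rangle\langle r_{\{\cdot,\cdot\}},u\rangle$, and the table of signs does collapse in every case to $\langle s_i,w\rangle\langle r_j,u\rangle=\bar w_i\bar u_j$ at position $(i,j)$. So while the paper treats this as a black-box symbolic verification, your route shows that part (2) is in fact an immediate corollary of the hand-proved formulas already established, modulo a nine-entry sign table that one can write out in full.
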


\nocite{*}
\bibliographystyle{alpha}
\bibliography{mfMSN}

\begin{thebibliography}{MMM16}

\bibitem[{\AA}H98]{astromheyden}
Kalle {\AA}str{\"o}m and Anders Heyden.
\newblock Continuous time matching constraints for image streams.
\newblock {\em International Journal of Computer Vision}, 28(1):85--96, 1998.

\bibitem[AO14]{ao}
Chris Aholt and Luke Oeding.
\newblock The ideal of the trifocal variety.
\newblock {\em Math. Comp.}, 83(289):2553--2574, 2014.

\bibitem[AT09]{alzatitortora}
Alberto Alzati and Alfonso Tortora.
\newblock Constraints for the trifocal tensor.
\newblock pages 261--269, 2009.

\bibitem[{\v{C}}S09]{capslovak}
Andreas {\v{C}}ap and Jan Slov{\'a}k.
\newblock {\em Parabolic geometries. {I}}, volume 154 of {\em Mathematical
  Surveys and Monographs}.
\newblock American Mathematical Society, Providence, RI, 2009.
\newblock Background and general theory.

\bibitem[Dem88]{dmz}
M.~Demazure.
\newblock Sur deux problemes de reconstruction. rr-0882, (inria-00075672).
\newblock 1988.

\bibitem[DK84]{semialgretract}
Hans Delfs and Manfred Knebusch.
\newblock Separation, retractions and homotopy extension in semialgebraic
  spaces.
\newblock {\em Pacific J. Math.}, 114(1):47--71, 1984.

\bibitem[Dol12]{dolg}
Igor~V. Dolgachev.
\newblock {\em Classical algebraic geometry}.
\newblock Cambridge University Press, Cambridge, 2012.
\newblock A modern view.

\bibitem[Eis95]{ei}
David Eisenbud.
\newblock {\em Commutative algebra}, volume 150 of {\em Graduate Texts in
  Mathematics}.
\newblock Springer-Verlag, New York, 1995.
\newblock With a view toward algebraic geometry.

\bibitem[FL01]{fl}
Olivier Faugeras and Quang-Tuan Luong.
\newblock {\em The geometry of multiple images}.
\newblock MIT Press, Cambridge, MA, 2001.
\newblock The laws that govern the formation of multiple images of a scene and
  some of their applications, With contributions from Th{\'e}o Papadopoulo.

\bibitem[FM93]{faug93}
Faugeras and Mourrain.
\newblock {\em Three-dimensional computer vision: A geometric viewpoint}.
\newblock The MIT Press, 1993.

\bibitem[Ful97]{fulton}
William Fulton.
\newblock {\em Young tableaux}, volume~35 of {\em London Mathematical Society
  Student Texts}.
\newblock Cambridge University Press, Cambridge, 1997.
\newblock With applications to representation theory and geometry.

\bibitem[GRS87]{grs}
Frank~D. Grosshans, Gian-Carlo Rota, and Joel~A. Stein.
\newblock {\em Invariant theory and superalgebras}, volume~69 of {\em CBMS
  Regional Conference Series in Mathematics}.
\newblock Published for the Conference Board of the Mathematical Sciences,
  Washington, DC; by the American Mathematical Society, Providence, RI, 1987.

\bibitem[Har77]{hart}
Robin Hartshorne.
\newblock {\em Algebraic geometry}.
\newblock Springer-Verlag, New York-Heidelberg, 1977.
\newblock Graduate Texts in Mathematics, No. 52.

\bibitem[HZ03]{hz}
Richard Hartley and Andrew Zisserman.
\newblock {\em Multiple view geometry in computer vision}.
\newblock Cambridge University Press, Cambridge, second edition, 2003.
\newblock With a foreword by Olivier Faugeras.

\bibitem[Jes03]{jessop}
Charles~Minshall Jessop.
\newblock {\em A treatise on the line complex}.
\newblock Cambridge University Press, 1903.

\bibitem[LF95]{lf2}
Q.-T. Luong and O.D. Faugeras.
\newblock The fundamental matrix: theory, algorithms, and stability analysis.
\newblock {\em International Journal of Computer Vision}, 17:43--75, 1995.

\bibitem[LH81]{lh}
Longuet-Higgins.
\newblock A computer algorithm for reconstructing a scene from two projections.
\newblock {\em Nature}, 293:133--135, 1981.

\bibitem[Lie77]{liePfaff}
Sophus Lie.
\newblock {\em Geometrie der {B}er\"uhrungs transformationen}.
\newblock Chelsea Publishing Co., New York, corrected edition, 1977.
\newblock With editorial assistance by Georg Scheffers.

\bibitem[Luo92]{luong}
Quang-Tuan Luong.
\newblock {\em {Fundamental matrix and self-calibration in computer vision.}}
\newblock Theses, {Universit{\'e} Paris Sud - Paris XI}, December 1992.

\bibitem[MMM16]{openmvg}
Pierre Moulon, Pascal Monasse, and Renaud Marlet.
\newblock Openmvg.
\newblock \url{https://github.com/openMVG/openMVG}, 2016.
\newblock Accessed: 09-01-2016.

\bibitem[OSS80]{oko}
Christian Okonek, Michael Schneider, and Heinz Spindler.
\newblock {\em Vector bundles on complex projective spaces}, volume~3 of {\em
  Progress in Mathematics}.
\newblock Birkh\"auser, Boston, Mass., 1980.

\bibitem[PF98]{pf}
Papadopoulo and Faugeras.
\newblock A new characterization of the trifocal tensor.
\newblock 1998.

\bibitem[Sei07]{ws}
Werner~M. Seiler.
\newblock Spencer cohomology, differential equations, and pommaret bases.
\newblock In M.~Rosenkranz and D.~M. Wang, editors, {\em Proceedings
  Gr\"{o}bner Bases and Symbolic Analysis}, volume~2 of {\em Radon Series on
  Computational and Applied Mathematics}, pages 169--216, Berlin, 2007. de
  Gruyter.

\bibitem[Sha97]{sharpe}
R.~W. Sharpe.
\newblock {\em Differential geometry}, volume 166 of {\em Graduate Texts in
  Mathematics}.
\newblock Springer-Verlag, New York, 1997.
\newblock Cartan's generalization of Klein's Erlangen program, With a foreword
  by S. S. Chern.

\bibitem[Swe16]{theia}
Chris Sweeney.
\newblock Theia vision library.
\newblock \url{http://www.theia-sfm.org/}, 2016.
\newblock Accessed: 09-01-2016.

\end{thebibliography}
\end{document}